\documentclass[dvips,preprint]{imsart}

\RequirePackage[OT1]{fontenc}
\RequirePackage{amsthm,amsmath}
\RequirePackage[colorlinks,citecolor=blue,urlcolor=blue]{hyperref}
\RequirePackage{hypernat}

\usepackage[english]{babel}
\usepackage[normalem]{ulem}
\usepackage{graphicx}	
\usepackage{amsfonts}
\usepackage{amssymb}    
\usepackage{float}
\usepackage{vmargin}

\def\var{\mbox{Var}}

\newfloat{Table}{Hptb}{lot} 


\def\1{{\mathbf 1}}


\newtheorem{thrm}{Theorem}[section]
\newtheorem{prte}[thrm]{Proposition}
\newtheorem{lemma}[thrm]{Lemma}

\newtheorem{defi}{Definition}[section]
\newtheorem{remark}{Remark}[section]
\newtheorem{ex}{Example}[section]

\newcommand{\pen}{\mathrm{pen}}

\newcommand{\supp}{\mathrm{supp}}



\voffset=-20mm 
\numberwithin{equation}{section}

\setmarginsrb{3cm}{1.5cm}{3cm}{3.5cm}{1cm}{1cm}{1cm}{1cm}
\begin{document}

\begin{frontmatter}
\title{Minimax risks for sparse regressions:~\\ Ultra-high dimensional
phenomenons}
\runtitle{Ultra-high dimensional regression}

\begin{aug}
\author{\fnms{Nicolas}
\snm{Verzelen}\thanksref{t11}\ead[label=e11]{
nicolas.verzelen@supagro.inra.fr}}

\runauthor{Verzelen}
\thankstext{t11}{
 INRA, UMR  729 MISTEA,
F-34060 Montpellier, FRANCE.  \printead{e11}}

\end{aug}

\begin{abstract}
Consider the standard  Gaussian linear regression model ${\bf Y}={\bf
X}\theta_0+\boldsymbol{\epsilon}$, where ${\bf Y}\in\mathbb{R}^n$ is a response
vector and ${\bf X}\in\mathbb{R}^{n\times p}$ is a design matrix.
Numerous work have been devoted to building efficient estimators of $\theta_0$
when $p$ is much  larger than $n$. In such a situation, a classical approach
amounts
to assume that $\theta_0$ is  approximately sparse. This paper
studies the minimax risks of estimation and testing over classes of $k$-sparse
vectors
$\theta_0$. These bounds shed light on the limitations due to high-dimensionality.
The results encompass the problem of prediction (estimation of ${\bf
X}\theta_0$), the inverse problem (estimation of $\theta_0$) and linear testing
(testing ${\bf X}\theta_0=0$). Interestingly, an elbow
effect occurs when the number of variables $k\log(p/k)$ becomes large compared
to $n$. Indeed, the minimax
risks and hypothesis separation distances blow up in this ultra-high dimensional
setting. We also prove that even dimension reduction techniques  cannot provide
satisfying results in an ultra-high dimensional setting.  Moreover, we  compute
the minimax risks when the variance of the noise is unknown.
The knowledge of this variance is shown to play a significant role in the optimal rates of
estimation and testing. All these minimax bounds provide a characterization of statistical problems that are so difficult so that no procedure can provide satisfying results.

\end{abstract}


\begin{keyword}[class=AMS] 
\kwd[Primary ]{62J05} 
\kwd[; secondary ]{62F35, 62C20} 
\end{keyword}

\begin{keyword} 
\kwd{Adaptive estimation}
\kwd{dimension reduction}
\kwd{high-dimensional regression} 
\kwd{high-dimensional geometry}
\kwd{minimax risk}
\end{keyword}

\end{frontmatter}

\section{Introduction}\label{section_introduction}

In many important statistical applications, including remote sensing,
functional MRI and gene expressions studies the number $p$ of parameters is
much larger than the number $n$ of observations. An active line of research
aims at developing computationally fast procedures that also achieve the
best possible statistical performances in this ``$p$ larger than $n$" setting. A typical example is the study of $l_1$-based
penalization methods for the estimation of linear regression models. However, if $p$ is really too large compared to $n$, 
all these new procedures fail to achieve a good estimation.

Thus, there is a need to understand the intrinsic limitations of a statistical problem: what is the best
rate of estimation or testing achievable by a procedure? Is it possible to
design good procedures for arbitrarily large $p$ or are there theoretical
limitations when $p$ becomes  ``too large"? These limitations tell us what kind of data analysis problems are too complex so that no
statistical procedure is able to provide reasonable results.  
Furthermore, the knowledge of such limitations may drive the research towards areas where computationally
efficient procedures are shown to be suboptimal.

\subsection{Linear regression and statistical problems}

We observe a response vector ${\bf Y}\in \mathbb{R}^n$ and
a real design matrix ${\bf X}$ of size $n\times p$. Consider the linear
regression model
\begin{eqnarray}\label{modele_generale}
 {\bf Y} = {\bf X}\theta_0 + \boldsymbol{\epsilon} \ ,
\end{eqnarray}
where the vector $\theta_0$ of size $p$ is unknown and the random vector
$\boldsymbol{\epsilon}$ follows a centered normal distribution 
$\mathcal{N}(0_n,\sigma^2 I_n)$. Here, $0_n$ stands for the null vector of
size $n$ and $I_n$ for the identity matrix of size $n$.\\

In some cases, the design ${\bf X}$ is considered as {\it fixed} either
because it has been previously chosen or because we work conditionally to
the design. In other cases, 
the rows of the design matrix ${\bf X}$ correspond
to a $n$-sample of
a random vector $X$ of size $p$. The design ${\bf X}$ is then said to be
{\it random}. A specific class of random
design is made of Gaussian designs where $X$ follows a centered normal
distribution
$\mathcal{N}(0_p,\Sigma)$. The analysis of fixed
and Gaussian designs share many common points. In
this work, we shall enhance the similarities and the differences between
both settings.\\

There are various statistical problems arising in the linear regression
model~(\ref{modele_generale}). Let us list the most classical issues:

\noindent
$({\bf P_1}):$ {\bf Linear hypothesis testing}. In general, the aim is to test
whether
$\theta_0$ belongs to a linear subspace of $\mathbb{R}^p$. Here, we focus on
testing the null hypothesis ${\bf H_0}$: $\{\theta_0=0_p\}$. In Gaussian design, this
is equivalent to testing whether ${\bf Y}$ is independent from ${\bf X}$.

\noindent
$({\bf P_2}):$ {\bf Prediction}. We focus on predicting the expectation
$\mathbb{E}[{\bf
Y}]$ in fixed design and the conditional expectation $\mathbb{E}[{\bf Y}|{\bf X}]$ in
Gaussian design. 

\noindent
$({\bf P_3}):$ {\bf Inverse problem}. The primary interest lies in estimating
$\theta_0$
itself and the corresponding loss function is
 $\|\widehat{\theta}-\theta_0\|_p^2$, where $\|.\|_p$ is the $l_2$ norm in
$\mathbb{R}^p$.

\noindent
$({\bf P_4}):$ {\bf Support estimation}  aims at recovering the support of
$\theta_0$, that is the set of indices corresponding to non-zero coefficients. The
easier problem of
{\bf dimension
reduction} amounts to estimate a set $\widehat{M}\subset \{1,\ldots p\}$
of ``reasonable" size that contains the support of $\theta_0$ with high
probability.

Much work have been devoted to these statistical questions in the so-called high-dimensional setting,
where the number of covariates $p$ is possibly much larger than $n$. A classical
approach to perform a statistical analysis in this setting is to assume that $\theta_0$ is
{\it sparse}, in the sense that most of the components of $\theta_0$ are equal to $0$.
For the problem of prediction (${\bf P_2}$),
procedures based on complexity penalization are
proved to provide good risk bounds  for known variance~\cite{BM01} and unknown
variance~\cite{BGH09} but are computationally inefficient. In contrast,
convex  penalization methods such as the Lasso or the Dantzig selector are
fast to compute, but only provide good performances  under restrictive
assumptions on
the design ${\bf X}$ (e.g. \cite{bickeltsy08,CandesPlan07,zou05}). Exponential weighted
aggregation methods~\cite{DT08,tsyrig10} are another example of fast and
efficient methods. The $l_1$ penalization methods have also
been analyzed for the inverse problem (${\bf P_3}$)~\cite{bickeltsy08} and
for support estimation (${\bf P_4}$)~\cite{lounici08,zhao06}. Dimension
reduction methods are often studied
in more general
settings than linear regression~\cite{cook02,fuku09}. In the linear regression
model, the SIS method~\cite{fan_ultra} based on the
correlation between the response and the covariates allows to perform dimension reduction. The problem of
high-dimensional hypothesis  testing (${\bf
P_1}$) has  so far attracted  less attention.
Some testing
procedures are discussed in \cite{baraud03,arias} for fixed design and
in \cite{villers1,Ingster10}  for
Gaussian design.

\subsection{Sparsity and ultra-high dimensionality}

 Given a positive
integer $k$, we say that the
vector $\theta_0$ is $k$-sparse if $\theta_0$ contains at most $k$ non-zero
components. We call $k$ the sparsity parameter. In this paper, we are interested
in the setting $k< n< p$.
We note $\Theta[k,p]$ the set of $k$-sparse vectors in
$\mathbb{R}^p$.\\

In linear regression, most of the results about classical procedures require
that the triplet $(k,n,p)$ satisfies $k[1+\log(p/k)]<n$.
When $k$ is ``small", this corresponds to assuming that $p$ is subexponential
with respect to $n$. The analysis of the
Lasso in prediction, inverse problems~\cite{bickeltsy08},
and support estimation~\cite{MB06} entail such assumptions. 
In dimension reduction, the SIS method~\cite{fan_ultra} also requires this
assumption. If the multiple
testing procedure of~\cite{baraud03} can be analyzed for
$k[1+\log(p/k)]$ larger than $n$, it  exhibits a much slower rate of testing in
this case. In noiseless problems ($\sigma=0$), compressed sensing methods~\cite{donoho_compressed} fail when $k[1+\log(p/k)]$ is large compared to $n$ (see~\cite{dota} for numerical illustrations). In the sequel, we say that the problem is
{\it ultra-high dimensional}\footnote{In some papers, the expression ultra-high dimensional has been used to characterize problems such that $\log(p)=O(n^{\beta})$ with $\beta<1$. We argue in this paper that that as soon as $k\log(p)/n$ goes to $0$, the case  $\log(p)=O(n^{\beta})$ is not intrinsically more difficult than conditions such as $p=O(n^{\delta})$ with $\delta>0$.} when $k[1+\log(p/k)]$ is large compared to $n$. Observe that ultra-high dimensionality does not necessarily
imply that $p$ is exponential with respect to $n$. As an example, taking
$p=n^3$ and $k=n/\log\log(n)$ asymptotically yields an ultra-high
dimensional problem.

Why should we care about ultra-high dimensional problem?
In this setting, there are so many variables that statistical questions such as the estimation of $\theta_0$ (${\bf P_3}$) or its support (${\bf P_4}$) are likely to be  difficult. 
Nevertheless, if the signal over noise ratio is large, do there exist estimators that perform relatively well? The answer is no. We prove in this paper that a phase transition phenomenon occurs in an ultra-high dimensional setting and that most of the estimation and testing problems become hopeless.
This phase transition phenomenon implies that some statistical problems that are tackled in postgenomic of functional MRI cannot actually be addressed properly.

\begin{ex}[{\bf Motivating example}]
In some gene network
inference problems (e.g.~\cite{chu}), the number $p$ of genes can be as
large as 5000 while the number $n$ of microarray experiments is only of
order
50. Let us consider a gene $A$. We note $\mathcal{G}_A$ the set of genes that interact with the gene $A$ and $k$ stands for the cardinality of $\mathcal{G}_A$.
How large can be  $k$  so that it is still ``reasonable" to estimate $\mathcal{G}_A$ from the microarray experiments? In statistical
terms, inferring the set of genes interacting with $A$ amounts to estimate the
support of a vector $\theta_0$ in a linear regression model (see e.g. \cite{MB06}). Our
answer is that if $k$ is larger than $4$, then the problem of network
estimation becomes extremely difficult. We will come back to
this example and explain this answer in Section \ref{section_simulations}. 
\end{ex}

\subsection{Minimax risks}

A classical way to assess the performance of an estimator $\widehat{\theta}$ is
to consider its maximal risk over a class $\Theta\subset \mathbb{R}^p$. This is
the minimax point of view. For the time being, we only define the notions of
minimaxity for 
estimation  problems (${\bf P_2}$ and ${\bf P_3}$).
Their counterpart in the case of testing (${\bf P_1}$) and dimension reduction
(${\bf P_4}$) will be introduced in subsequent
sections.
Given a loss function $l(.,.)$ and estimator $\widehat{\theta}$, the maximal
risk of $\widehat{\theta}$ over $\Theta[k,p]$ for a design ${\bf X}$ (or a
covariance $\Sigma$ in the Gaussian design case) and a
variance $\sigma^2$ is defined by
$\sup_{\theta_0\in\Theta[k,p]}\mathbb{E}_{\theta_0,\sigma}
 [ l(\widehat{\theta},\theta_0)]$. Taking the infimum of the maximal risk over all
possible estimators $\widehat{\theta}$, we obtain the
{\it minimax risk} 
$$ \inf_{\widehat{\theta}}\sup_{\theta_0\in\Theta[k,p]}\mathbb{E}_{\theta_0,\sigma}
 [ l(\widehat{\theta},\theta_0)]\ .$$ We say that an estimator
$\widehat{\theta}$
is minimax if its maximal risk over $\Theta[k,p]$ is close to the minimax risk.
\\

In practice, we do not know the number $k$ of non-zero components of $\theta_0$
and
we seldom know the variance $\sigma^2$ of the error. If an estimator $\widehat{\theta}$ does not
require the knowledge of $k$ and nearly achieves the minimax risk over
$\Theta[k,p]$ for a range of $k$, we say that $\widehat{\theta}$ is adaptive to
the sparsity. Similarly,  an estimator $\widehat{\theta}$ 
is adaptive to the variance $\sigma^2$, if it does not require the
knowledge of $\sigma^2$ and nearly achieves the minimax risk for all 
$\sigma^2>0$. When possible, the main challenge is to build adaptive procedures.
In some statistical problems considered here, adaptation is in fact impossible and
there is an
unavoidable loss when the variance or the sparsity parameter is unknown. In such
situations, it is interesting to quantify this unavoidable loss.

\subsection{Our contribution and related work}

In the specific case of the Gaussian sequence model, where $n=p$ and ${\bf
X}=I_n$, the minimax risks over $k$-sparse vectors  have been studied for a long
time. Donoho and Johnstone~\cite{donoho94,johnstone94} have provided the asymptotic minimax
risks of prediction $({\bf P_2})$. Baraud~\cite{baraudminimax} has studied the
optimal rate
of testing from a non-asymptotic point of view while Ingster~\cite{ingster99,ingster02,ingster02b} has provided the
asymptotic optimal rate of testing with
exact constants.

Recently, some high-dimensional problems have been studied from a minimax point of view. 
Wainwright~\cite{wain_minimax2,wain_minimax} provides minimax lower
bounds for the problem of support estimation (${\bf P_4}$).
Raskutti et al.~\cite{raskwain09}  and Rigollet and Tsybakov~\cite{tsyrig10} 
have provided  minimax upper bounds and lower
bounds for  $({\bf P_2})$ and $({\bf P_3})$  over $l_q$ balls  for
general fixed designs ${\bf X}$ when the variance $\sigma^2$ is known (see also Ye and Zhang~\cite{zhangminimax} and Abramovich and Grinshtein~\cite{abramovich10}).
Arias-Castro et al.~\cite{arias} and Ingster et al.~\cite{Ingster10} have computed the asymptotic minimax detection boundaries for the testing problem $({\bf P_1})$ for some specific designs. However, their study only encompasses reasonable dimensional problems ($p$ grows polynomially with $n$).
Some minimax lower bounds have also been stated for testing (${\bf P_1}$) and prediction (${\bf P_2}$) problems
with Gaussian design~\cite{verzelen_regression,villers1}. All the aforementioned results do not cover the ultra-high dimensional case and do not tackle the 
problem of adaptation to both $k$ and $\sigma$.

This paper provides minimax lower bounds and upper bounds
for the problems (${\bf P_1}$), (${\bf P_2}$), (${\bf P_3}$) when the regression
vector $\theta_0$ is $k$-sparse for fixed and random designs, known and unknown variance, known and unknown sparsities. The lower and upper bounds match up to possible differences in the logarithmic terms. The main discoveries  are the following:
\begin{enumerate}
\item {\bf Phase transition in an ultra-high dimensional setting}. Contrary to previous work, our results
cover both the high-dimensional and ultra-high dimensional setting. We establish that for 
each of the problems (${\bf P_1}$), (${\bf P_2}$) and (${\bf P_3}$), an elbow
effect occurs when $k\log(p/k)$ becomes large compared to $n$. Let us emphasize the difference between the high-dimensional and the ultra-high dimensional regimes for two problems: prediction (${\bf P_2}$) and support estimation (${\bf P_4}$).\\

{\it Prediction with random design.} In the (non-ultra) high-dimensional setting, the minimax risk of prediction for a random design regression is of order $\sigma^2 k\log(p/k)/n$ (see Section \ref{section_main_result}). Thus, the effect of the sparsity $k$ is linear and the effect of the number of variables $p$  is logarithmic. In an ultra-high dimensional setting,  that is when $k\log(p/k)/n$ is large, we establish that an elbow effect occurs in the minimax risk. In this setting, the minimax risk becomes of order $\sigma^2\exp[Ck\{1+\log(p/k)\}/n]$, where $C$ is a positive constant : it grows exponentially fast with $k$ and polynomially with $p$ (see the red curve in Figure \ref{figure02}). If it was expected that the minimax risk cannot be small for such problems, we prove here that the minimax risk is in fact exponentially larger than the usual $k\log(p/k)/n$ term.\\

{\it Support estimation.} In a non-ultra high dimensional setting it is known~\cite{wain_minimax} that under some assumptions on the design ${\bf X}$ (e.g. each component of ${\bf X}$ is drawn from iid. standard normal distribution) the support of a $k$-sparse vector $\theta_0$ is recoverable with high probability if
\begin{equation}\label{condition_support_gd}
\forall i\in\mathrm{supp}(\theta_0)\, ,\quad   |(\theta_0)_i| \geq C\sqrt{\log(p)/n}\sigma\ , 
\end{equation}
where $C$ is a numerical constant. In an ultra-high dimensional setting, even if 
\begin{equation}\label{condition_support_tgd}
\forall i\in\mathrm{supp}(\theta_0)\, ,\quad   |(\theta_0)_i| = \exp[Ck\{1+\log(p/k)\}/n]/\sqrt{k}\sigma \ ,   
\end{equation}
 it is not possible to estimate the support of $\theta_0$ with high probability. Observe that the condition (\ref{condition_support_tgd}) is much stronger than (\ref{condition_support_gd}). In fact, it is not even possible to reduce drastically the dimension of the problem without forgetting relevant variables with positive probability.
More precisely, for any dimension reduction procedure that selects a subset of variables $\widehat{M}\subset \{1,\ldots p\}$ of size $p^{\delta}$ with some $0<\delta<1$  (described in  Proposition \ref{prte_minoratoin_subset_estimation}), we have $\mathrm{supp}(\theta_0)\nsubseteq \widehat{M}$ with probability away from zero (see  Proposition \ref{prte_minoratoin_subset_estimation}). Thus, it is almost hopeless to have a reliable estimation of the support of $\theta_0$ even if $\|\theta_0\|_p^2/\sigma^2$ is large.
This impossibility of dimension reduction for ultra-high dimensional problems is numerically
illustrated in Section \ref{section_simulations}.

\item {\bf Adaptation to the sparsity $k$ and to the variance $\sigma^2$}. Most
theoretical results for the problems (${\bf P_1}$) and (${\bf
P_2}$) require that the variance $\sigma^2$ is known. Here, we establish these
minimax bounds for both known and unknown variance and  known and unknown
sparsity. The knowledge of the variance is proved to play a fundamental role
for the testing problem (${\bf P_1}$) when $k[1+\log(p/k)]$ is large
compared to $\sqrt{n}$. The knowledge of $\sigma^2$ is also proved to be crucial
for (${\bf P_2}$) in an ultra-high dimensional setting. Thus, specific work is needed to develop  fast and efficient procedures that do not require the knowledge of the variance. Furthermore, variance  estimation is extremely difficult in an ultra-high dimensional setting.

\item {\bf Effect of the design}. Lastly, the minimax bounds of $({\bf P_1})$, $({\bf P_2})$ and $({\bf P_3})$
are established for fixed and Gaussian designs. Except for the problem of
prediction $({\bf P_2})$, the minimax risks are shown to be of the same nature for both forms
of the design. Furthermore, we investigate the dependency of the minimax risks
on
the design ${\bf X}$ (resp. $\Sigma$) in Sections \ref{section_linear_testing}-\ref{section_inverse}.
\end{enumerate}

 The minimax bounds stated in this paper are non
asymptotic. While some upper bounds are consequences of recent results in the literature, most of the effort is spent here to derive the lower bound.
These bounds rely on Fano's and Le Cam's
methods~\cite{yu} and on geometric considerations. In each case, near optimal
procedures are exhibited.

\subsection{Organization of the paper}

In Section \ref{section_main_result}, we summarize the minimax bounds for
specific designs called ``worst-case" and ``best-case" designs in order to
emphasize the effects of dimensionality. The
general results are stated in Section \ref{section_linear_testing} for the tests
and Section
\ref{section_prediction} for the problem of prediction. The problems
of inverse
estimation, support estimation, and dimension
reduction are studied in Section \ref{section_inverse}. In Section
\ref{section_simulations}, we address
the following practical question: For exactly what range of $(k,p,n)$ should we 
consider a statistical problem as ultra-high dimensional? A small simulation
study illustrates this answer. Section
\ref{section_conclusion} contains the final discussion and side results about variance estimation. Section
\ref{section_proof_lowerbound} is devoted to the proof of the mains minimax lower
bounds. Specific statistical procedures allow to establish the minimax upper bounds. Most of these procedures are used as theoretical tools but should not be applied in a high dimensional setting because they are computationally inefficient.  In order to clarify the statements of the results in Sections \ref{section_linear_testing}--\ref{section_inverse}, we postpone the definition of these procedures to Section \ref{section_proof_upperbound}.
The remaining proofs are described in a technical appendix~\cite{technical}.

\section{Notations and preliminaries}\label{section_notation}

We respectively note $\|.\|_n$ and $\|.\|_p$  the $l_2$  norms in
$\mathbb{R}^n$ and $\mathbb{R}^p$, while $\langle .\rangle_n$ refers to the
inner product in $\mathbb{R}^n$. For any $\theta_0\in\mathbb{R}^p$ and
$\sigma>0$, $\mathbb{P}_{\theta_0,\sigma}$ and $\mathbb{E}_{\theta_0,\sigma}$
refer to the joint distribution of $({\bf Y},{\bf X})$. When there is no risk of 
 confusion, we simply write $\mathbb{P}$ and $\mathbb{E}$. All references with a capital letter such as Section A or Eq.(A.3) refer to the technical Appendix~\cite{technical}.

In the sequel, we note
$\mathrm{supp}(\theta_0)$ the support of $\theta_0$. For any $1\leq k\leq p$,
$\mathcal{M}(k,p)$ stands for the collections of all
subsets of $\{1,\ldots, p\}$ with cardinality  $k$. Given $i\in\{1,\ldots,p\}$, we note ${\bf X}_i$ the vector of size $n$ corresponding to $i$-th column of ${\bf X}$.
For $m\subset\{1,\ldots,p\}$,  ${\bf X}_m$ stands for the $n\times |m|$ submatrix of ${\bf X}$ that contains the columns ${\bf X}_i$, $i\in m$.
In what follows, we note ${\bf X}^T$ the transposed matrix of ${\bf X}$. \\

\noindent 
{\bf  Gaussian design and conditional distribution}. When the design is said to be ``Gaussian", the $n$
rows of ${\bf X}$ are $n$ independent samples of a random row vector $X$ such that $X^T\sim \mathcal{N}(0_p,\Sigma)$. Thus, $({\bf
Y},{\bf X})$ if a $n$-sample of the
random vector $(Y,X^T)\in\mathbb{R}^{p+1}$, where $Y$ is defined by
\begin{eqnarray}\label{modele_gaussien}
 Y= X\theta_0+ \epsilon\ ,
\end{eqnarray}
 where $\epsilon\sim\mathcal{N}(0,\sigma^2)$. The linear regression model with
{\it Gaussian} design is relevant to understand the conditional distribution of
a Gaussian variable $Y$ conditionally to a Gaussian vector since
$\mathbb{E}[Y|X]=X\theta_0$ and $\var(Y|X)=\sigma^2$. This is why we shall often
refer to $\sigma^2$ as the conditional variance of $Y$ when considering
Gaussian design. This model is also closely
connected to the estimation of  Gaussian graphical
models~\cite{MB06,villers1}.\\

As explained later, the minimax risk
over $\Theta[k,p]$ strongly depends on the design ${\bf X}$. This is why we
introduce some relevant quantities on ${\bf X}$.
\begin{defi}\label{definition_propriete_RIP}
Consider some integer $k>0$ and some design ${\bf X}$.
\begin{eqnarray}
\varPhi_{k,+}({\bf X}) :=
\sup_{\theta\in \Theta[k,p]\setminus\{0_p\}}\frac{\|{\bf
X}\theta\|_n^2}{\|\theta\|_p^2}\hspace{0.5cm}\text{ and }\hspace{0.5cm}
\varPhi_{k,-}({\bf X}) :=\inf_{\theta\in\Theta[k,p]\setminus\{0_p\}}\frac{\|{\bf
X}\theta\|_n^2}{\|\theta\|_p^2}\ .
\end{eqnarray}
In fact, $\varPhi_{k,+}({\bf X})$ and $\varPhi_{k,-}({\bf X})$ respectively
correspond
to the largest and the smallest restricted eigenvalue of order $k$ of ${\bf
X}^T{\bf X}$.
\end{defi}
Given a symmetric real square matrix $A$, $\varphi_{\max}(A)$ stands for the
largest eigenvalue of $A$.
Finally, $C$, $C_1$,$\ldots$ denote positive universal constants that may vary
from line to line. The notation $C(.)$ specifies
the dependency on some quantities.

In the propositions, the constants involved
in the assumptions are not always expressly specified. For instance, sentences
of the
form ``Assume that $n\geq C$. Then, $\ldots$" mean that  ``There exists an
universal $C>0$ such that if $n\geq C$, then $\ldots$".

\section{Main results}\label{section_main_result}

The exact bounds are stated in Section
\ref{section_linear_testing}--\ref{section_inverse}. In order to explain these
results, 
we now summarize the main minimax bounds by focusing  on the role of $(k,n,p)$
 rather than on the dependency on the design ${\bf X}$. In
order to keep the notations short, we do not provide in this section the minimal
assumptions of the results. Let us simply mention that all of them  are
valid if the sparsity $k$ satisfies $k\leq (p^{1/3})\wedge (n/5)$ and that $p\geq
n\geq C$ where $C$
a positive numerical constant.

\subsection{Prediction}\label{section_main_prediction}

\subsubsection{Definitions}
First, the results are described for the problem of prediction (${\bf P_2}$)
since the problem of minimax estimation is more classical in this setting.
Different prediction loss functions are used for fixed and Gaussian
designs. When the design is considered as fixed, we study the loss
$\|{\bf
X}(\theta_1-\theta_2)\|_n^2/(n\sigma^2)$. For  Gaussian design, we consider the
integrated prediction  loss function:
\begin{eqnarray}\label{definition_perte_l}
\|\sqrt{\Sigma}(\theta_1-\theta_2)\|_p^2/\sigma^2
=\mathbb{E}\left[\{X(\theta_1-\theta_2)\}^2\right]/\sigma^2\ .
\end{eqnarray}
Given a design ${\bf X}$, the minimax risk of prediction over $\Theta[k,p]$
with respect to ${\bf X}$ is 
\begin{eqnarray}\label{definition_minimax_prediction_fixe}
\mathcal{R}_F[k,{\bf
X}]=\inf_{\widehat{\theta}}\sup_{\theta_0\in\Theta[k,p]}\mathbb{E}_{\theta_0,\sigma}
[ \| { \bf
X}(\widehat{\theta}-\theta_0)\|_n^2/(n\sigma^2)]\ . 
\end{eqnarray}
For a Gaussian design with
covariance $\Sigma$, we study the quantity
\begin{eqnarray}\label{definition_minimax_prediction_random}
\mathcal{R}_R[k,\Sigma]:=\inf_{\widehat{\theta}}\sup_{\theta_0\in\Theta[k,p]}
\mathbb { E }_{\theta_0,\sigma } [
\| \sqrt{\Sigma}(\widehat{\theta}-\theta_0)\|_p^2/\sigma^2]\ . 
\end{eqnarray}
These minimax risks of prediction do not only depend on
$(k,n,p)$ but also on the design ${\bf X}$ (or on the covariance $\Sigma$). The
computation of the exact dependency of the minimax risks on ${\bf X}$ or
$\Sigma$ is a challenging question. 
To simplify the presentation in this section, we only describe the minimax
prediction risks for worst-case designs defined by
\begin{equation}\label{definition_minimax_prediction_global}
\mathcal{R}_{F}[k]:=\sup_{\bf X}\ \mathcal{R}_F[k,{\bf
X}],\quad  \mathcal{R}_{R}[k] :=\sup_{\Sigma}\mathcal{R}_R[k,\Sigma]\ ,
 \end{equation}
the supremum being taken over all designs ${\bf X}$ of size $n\times p$ (resp.
all covariance matrices $\Sigma$). 
The quantity $\mathcal{R}_{F}[k]$  corresponds to
the smallest risk achievable {\it uniformly} over $\Theta[k,p]$ {\it and} all
designs ${\bf X}$. It is shown in Section \ref{section_prediction} that the quantity $\mathcal{R}_R[k]$ is achieved (up to constants) for a covariance $\Sigma=I_p$ while the quantity $\mathcal{R}_F[k]$ is achieved with high probability for designs ${\bf X}$ that are realizations of the standard Gaussian design (all the components of ${\bf X}$ are drawn independently from a standard normal distribution). This corresponds to designs used in compressed sensing~\cite{donoho_compressed}. In fact, the maximal risks $\mathcal{R}_F[k]$ and $\mathcal{R}_R[k]$ for the prediction problem correspond to typical situations where the designs is well-balanced, that is as close as possible to orthogonality.

\subsubsection{Results}

In the
sequel, we say that $\mathcal{R}_{F}[k]$ is \emph{of order} $f(k,p,n,C)$, where $C$ is positive constant when
there exist two positive universal constants $C_1$ and
$C_2$ such that
$$f(k,p,n,C_1)\leq \mathcal{R}_{F}[k]\leq
f(k,p,n,C_2)\ .$$

These minimax risks are computed in Section \ref{section_prediction} and are
gathered in Table \ref{tab2}. They are also depicted on Figure \ref{figure02}.

\begin{figure}[hptb]
\begin{center}
{\includegraphics[width=6cm,angle=0]{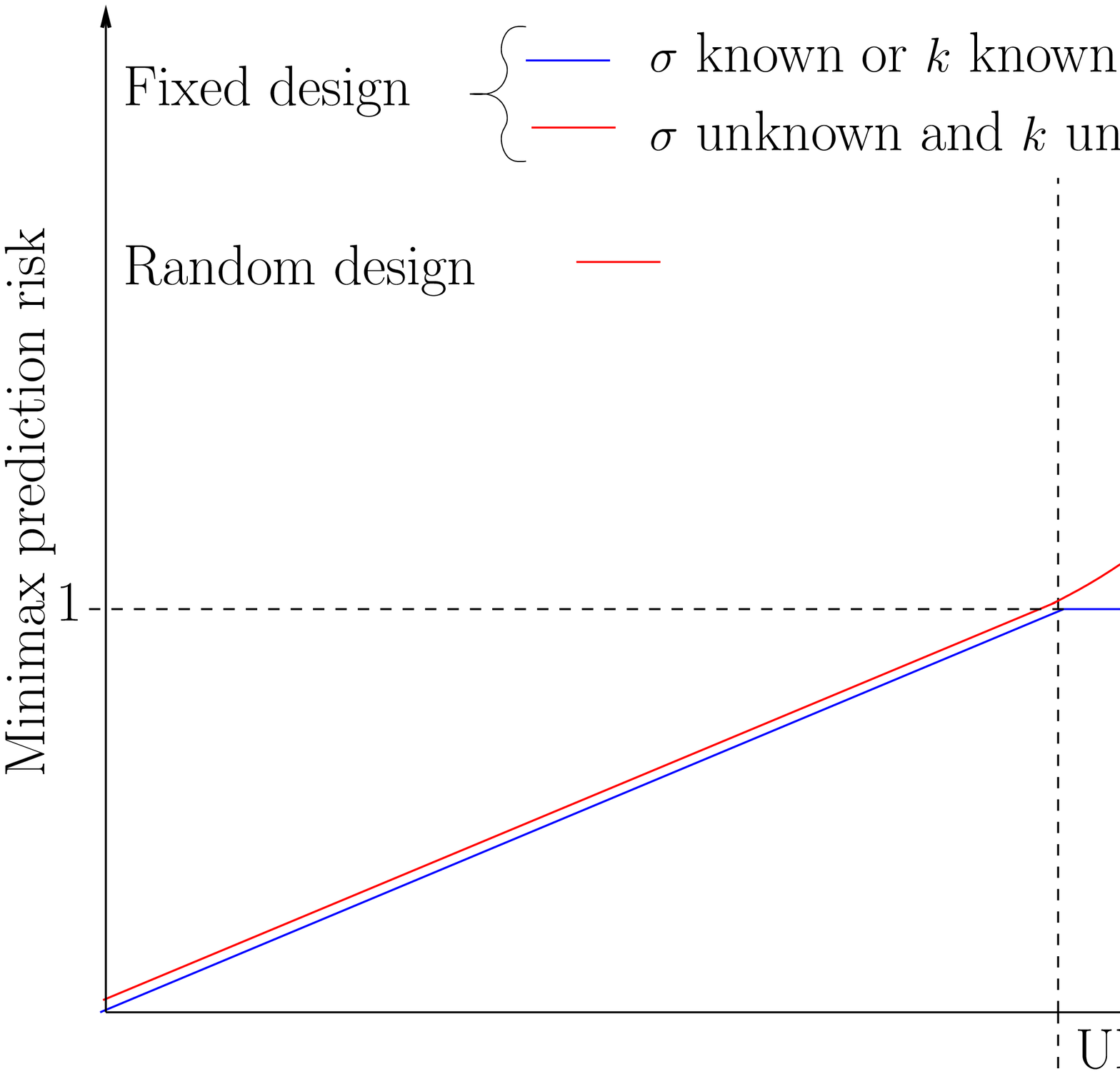}}
\caption{Minimax prediction risk (${\bf P_2}$) over $\Theta[k,p]$ as a function of $k$ for  fixed and random design and known and unknown variance.  The corresponding bounds are stated in Section \ref{section_prediction}.}\label{figure02} 
\end{center}
\end{figure}

\begin{Table}[h] 

\caption{Orders of magnitude of the  minimax risks of prediction $\mathcal{R}_{F}[k]$ and
$\mathcal{R}_{R}[k]$ over $\Theta[k,p]$. \label{tab2} }

\begin{center}

\begin{tabular}{c|c}
  {\normalsize{\bf Fixed} Design}: $\mathcal{R}_{F}[k]$ & {\normalsize{\bf Gaussian} Design}: $\mathcal{R}_{R}[k]$
 \\
\hline \\
 \hspace{0.7cm} $C\left[\frac{\displaystyle
k}{\displaystyle n}\log\left(\frac{\displaystyle p}{\displaystyle k}\right)\right]\wedge 1$\hspace{0.8cm} & \hspace{0.5cm} 
$C_1\frac{\displaystyle k}{\displaystyle
n}\log\left(\frac{\displaystyle p}{\displaystyle k}\right)\exp\left[C_2\frac{\displaystyle k}{\displaystyle n}\log\left(\frac{\displaystyle p}{\displaystyle k}\right)\right]$
\hspace{0.3cm}
\end{tabular}
 
\end{center}
\end{Table}

When $k\log(p/k)$ remains small compared to $n$, the minimax risk of
prediction is of the same order for fixed and Gaussian design. The
$k\log(p/k)/n$ risk is classical
and has been known for a long time in the specific case of the Gaussian sequence
model~\cite{johnstone94}. Some procedures based on complexity
penalization or aggregation (e.g. \cite{BM01}) are
proved to achieve these risks uniformly over all designs ${\bf X}$.
Computationally efficient procedures like the Lasso or the Dantzig selector
are only proved to achieve a $k\log(p)/n$ risk under assumption on the design ${\bf
X}$~\cite{bickeltsy08}. If the support of $\theta_0$ is known in advance, the
parametric risk is of order $k/n$. Thus, the price to pay for
not knowing the support of $\theta_0$ is only logarithmic in $p$.\\

In an ultra-high dimensional setting, the minimax prediction risk in fixed
designs remains smaller than one. It is the minimax risk of estimation of the
vector  $\mathbb{E}({\bf Y})$ of size $n$. This means that the sparsity index
$k$ 
does not play anymore a role in ultra-high dimension. For a Gaussian  design,
the minimax prediction risk becomes of order $C_1
(p/k)^{C_2k/n}$: it increases
exponentially fast with respect to $k$ and polynomially fast with respect to $p$.
Comparing this risk with the parametric rate $k/n$, we observe that the price to
pay for not knowing
the support of $\theta_0$ is now far higher than $\log(p)$.\\

In Section \ref{section_prediction}, we also study the adaptation to the
sparsity index $k$ and to the variance $\sigma^2$. We prove that
adaptation to $k$ and $\sigma^2$ is possible for a Gaussian design. In fixed
design, no procedure can be simultaneously adaptive to the sparsity $k$ and the
variance $\sigma^2$ (see the red curve in Figure \ref{figure02} that corresponds to fixed design, $\sigma$ and $k$ unknown).

\subsection{Testing}\label{section_resultat_test}
\subsubsection{Definitions}
Let us turn to the problem $({\bf P_1})$ of testing ${\bf H_0}$: $\{\theta_0=0_p\}$
against ${\bf H_1}$: $\{\theta_0\in \Theta[k,p]\setminus\{0_p\}\}$. We fix a level
$\alpha>0$ and  a type II error probability $\delta>0$. Minimax lower and
upper bounds for this problem are discussed in Section \ref{section_linear_testing}.

Suppose we are given a test procedure  $\Phi_{\alpha}$ of level $\alpha$ for
fixed design ${\bf X}$ and known variance $\sigma^2$. The $\delta$-separation
distance of
$\Phi_{\alpha}$ over $\Theta[k,p]$, noted $\rho_F[\Phi_{\alpha},k,{\bf X}]$ is
the minimal number $\rho$, such that
$\Phi_{\alpha}$ rejects ${\bf H_0}$ with probability larger than $1-\delta$ if 
$\|{\bf X}\theta_0\|_n/\sqrt{n}\geq
\rho\sigma$. Hence, $\rho_F[\Phi_{\alpha},k,{\bf X}]$ corresponds to
the minimal distance such that the hypotheses $\{\theta_0=0_p\}$ and $\{\theta_0\in\Theta[k,p]$, $\|{\bf
X}\theta_0\|_n^2\geq n\rho_F^2[\Phi_{\alpha},k,{\bf X}]\sigma^2\}$
are
well separated by the test $\Phi_{\alpha}$.
\begin{eqnarray*}\label{defition_distance_separation}
\rho_F[\Phi_{\alpha},k,{\bf X}]:=
\inf\left\{\rho>0,\, \inf_{\theta_0\in\Theta[k,p],\ \|{\bf X}\theta_0\|_n\geq
\sqrt{n}\rho\sigma}\mathbb{P}_{\theta_0,\sigma}[\Phi_{\alpha}=1]\geq
1-\delta\right\}\ .
\end{eqnarray*}
Although the separation distance also depends on  $\delta$, $n$, and $p$, we only write $\rho_F[\Phi_{\alpha},k,{\bf X}]$ for the sake of conciseness.
By definition, the test $\Phi_{\alpha}$ has a power larger than $1-\delta$ for $\theta_0\in\Theta[k,p]$ such that $\|{\bf X}\theta_0\|_n^2\geq \rho_F^2[\Phi_{\alpha},k,{\bf X}]$. Then, we consider
\begin{eqnarray}\label{definition_vitesse_minimax_test}
\rho_F^{*}[k,{\bf X}]:=
\inf_{\Phi_{\alpha}}\rho[\Phi_{\alpha},k,{\bf X}]\ .
\end{eqnarray}
The infimum runs over all level-$\alpha$ tests. We call this quantity the
$(\alpha,\delta)$-minimax separation distance over $\Theta[k,p]$ with design
${\bf X}$ and variance $\sigma^2$. The minimax separation distance is
a non-asymptotic counterpart of the detection boundaries
studied in the Gaussian sequence model~\cite{donoho04}.

Similarly, we define the $(\alpha,\delta)$-minimax separation distance over
$\Theta[k,p]$ with  Gaussian design by replacing the distance $\|{\bf
X}\theta_0\|_n/\sqrt{n}$ by the distance $\|\sqrt{\Sigma}\theta_0\|_p$:
\begin{equation}\label{definition_vitesse_minimax_test_gaussien}
\rho_R[\Phi_{\alpha},k,\Sigma]:= \inf\Big\{\rho>0,
\inf_{
\scriptstyle \theta_0\in\Theta[k,p],\ 
\scriptstyle \|\sqrt{\Sigma}\theta_0\|_p\geq
\rho\sigma
      }\mathbb{P}_{\theta_0,\sigma}[\Phi_{\alpha}=1]\geq
1-\delta\Big\}\ ,\, \rho_R^{*}[k,\Sigma]:=
\inf_{\Phi_{\alpha}}\rho_R[\Phi_{\alpha},k,\Sigma]\ .
\end{equation}
Various bounds on $\rho^*_F[k,{\bf X}]$, $\rho^*_R[k,\Sigma]$ are stated in Section \ref{section_linear_testing}.
In this section, we only provide the orders of magnitude  of the
minimax
separation distances in the ``worst case" designs in order to emphasize the effect of dimensionality:
\begin{eqnarray}\label{definition_vitesse_minimax_pire_des_cas}
 \rho^{*}_F[k] := \sup_{{\bf
X}}\rho_F^{*}[k,{\bf X}]\ ,\hspace{1.5cm} \rho^{*}_R[k] :=
\sup_{\Sigma}\rho_R^{*}[k,\Sigma]\ .
\end{eqnarray}
This is the smallest separation distance that can be
achieved by a procedure $\Phi_{\alpha}$ {\it uniformly} over all designs ${\bf
X}$ (resp. $\Sigma$). As for the prediction problem, it will be proved in Section \ref{section_linear_testing}, that the quantity $\rho^*_F[k]$ and $\rho^*_R[k]$ are achieved for well-balanced designs.

It is
not always possible to achieve the minimax separation distances with a
procedure $\Phi_{\alpha}$ that \emph{does not require} the knowledge of the
variance $\sigma^2$. This is why we also consider $\rho^{*}_{F,U}[k]$ and
$\rho^{*}_{R,U}[k]$ the minimax separation distance for fixed and Gaussian
design when the variance is unknown. Roughly, $\rho^{*}_{F,U}[k]$ corresponds
to the minimal distances
$\rho^2$ that allows to separate well the hypotheses $\{\theta_0=0_p\ \text{ and }\ \sigma>0\}$ and $\{\theta_0\in\Theta[k,p]\text{ and }\sigma>0\ ,\  \|{\bf
X}\theta_0\|_n^2/\sigma^2\geq n\rho^2\}$ when $\sigma$ is unknown. We shall provide
a formal definition at the beginning of Section \ref{section_linear_testing}.

\subsubsection{Results}

In Table \ref{tab1}, we provide the orders of the minimax separation
distances over $\Theta[k,p]$ for fixed and Gaussian designs, known and unknown
variance (see also Figure \ref{figure01}).

\begin{figure}[htbp]

\begin{center}
\includegraphics[width=6cm,angle=0]{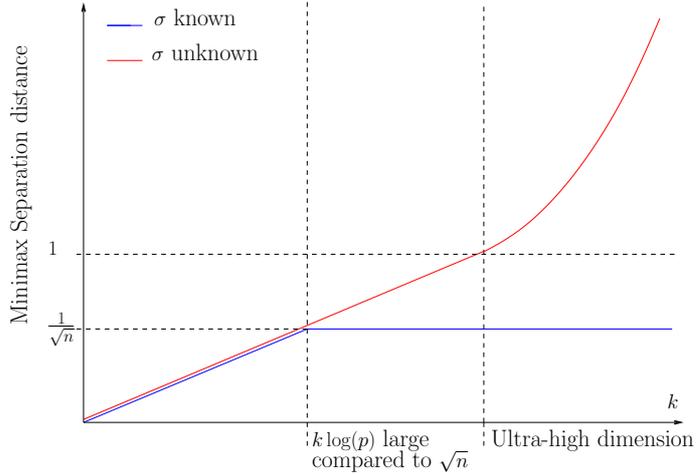}
\caption{Orders of magnitude of the minimax separation distances $(\rho^*_F[k])^2$, $(\rho^*_R[k])^2$, $(\rho^*_{F,U}[k])^2$ and $(\rho^*_{R,U}[k])^2$ over $\Theta[k,p]$ (${\bf P_1}$) for fixed and random designs and known and unknown variances. Here, $\rho^*_F[k]$ and $\rho^*_R[k]$ behave similarly while $\rho^*_{F,U}[k]$ and $\rho^*_{R,U}[k]$ behave similarly. The corresponding bounds are stated in Section \ref{section_linear_testing}.}\label{figure01} 
\end{center}
\end{figure}

\begin{Table}[h] 
\caption{Order of the minimax separation distances over $\Theta[k,p]$
for fixed and Gaussian
design, known and unknown variance:
$(\rho_F^*[k])^2$, $(\rho_R^*[k])^2$, $(\rho_{F,U}^*[k])^2$, and
$(\rho_{R,U}^*[k])^2$.\label{tab1}}

\begin{center}

\begin{tabular}{c|c}
  &{\normalsize {\bf Fixed} and {\bf Gaussian} Design}\\ 
\hline \\ {\normalsize{\bf Known} $\sigma^2$}: $(\rho^{*}_F[k])^2$  {\normalsize
and} $(\rho^{*}_R[k])^2$&
$C(\alpha,\delta)\frac{\displaystyle k\log(p)}{ \displaystyle
n}\wedge\frac{1}{\displaystyle
\sqrt{n}}$ \\
{\normalsize{\bf Unknown} $\sigma^2$}: $(\rho^{*}_{F,U}[k])^2$ {\normalsize and}
$(\rho^{*}_{R,U}[k])^2$ & 
$C(\alpha,\delta)\frac{\displaystyle
k\log(p)}{\displaystyle n}\exp\left[C_2(\alpha,\delta)\frac{\displaystyle
k\log(p)}{\displaystyle
n}\right]$
\end{tabular}
 
\end{center}
\end{Table}

In contrast to $({\bf P_2})$, the minimax separation distances are of the same
order for fixed and Gaussian design. 

\begin{enumerate}
 \item  When $k\log(p)\leq \sqrt{n}$, all the minimax separation distances are of
order $k\log(p)/n$. This quantity also corresponds to the minimax risk of
prediction $({\bf P_2})$ stated in the previous subsection. This separation
distance has already been proved in the specific case of the Gaussian sequence
model~\cite{baraudminimax,donoho04}.

\item When $k\log(p)\geq \sqrt{n}$, the minimax separation distances are different
under known and unknown variance.  If the variance
is known, the minimax separation distance over $\Theta[k,p]$ stays of order
$1/\sqrt{n}$. Here, $1/\sqrt{n}$ corresponds in fixed design to the minimax
separation distance of the hypotheses
$\{\mathbb{E}[{\bf Y}]=0_n\} $ against the general hypothesis $\{ \mathbb{E}[{\bf
Y}]\neq
0_n\}$ for known variance (see Baraud~\cite{baraudminimax}).

\item If the variance is unknown, the minimax separation distance over
$\Theta[k,p]$ is still of order  $k\log(p)/n$ if
$k\log(p)$ is small compared to $n$. In contrast, the
minimax separation distance blows up to the order $C_1p^{C_2k/n}$ in a
ultra-high dimensional setting. This blow up phenomenon has also been observed
in the previous section for the problem  of prediction~$({\bf P_2})$ in Gaussian
design. In conclusion, the knowledge of the variance is
of great importance for $k\log(p)$ larger than $\sqrt{n}$.
\end{enumerate}

\subsection{Inverse problem and support
estimation}\label{section_main_result_inverse}
\subsubsection{Definitions}
In the inverse problem (${\bf P_3}$), we are primarily interested in the
estimation of
$\theta_0$ rather than ${\bf X}\theta_0$. This is why the loss function under
study is $\|\theta_1-\theta_2\|_p^2$. Minimax lower and upper bounds for this
loss function are discussed in Section \ref{section_inverse}. For a fixed design
${\bf X}$, the minimax risk of estimation is 
\begin{equation}\label{defi_risque_minimax_inverse}
\mathcal{RI}_F[k,{\bf X}]:=
\inf_{\widehat{\theta}}\sup_{\theta_0\in\Theta[k,p]}\mathbb{E } _ { \theta_0 ,
\sigma }
 [ \|\theta_0-\widehat{\theta}\|_p^2/\sigma^2]\ .
\end{equation}

If one transforms the design ${\bf X}$ by an homothety of factor $\lambda>0$,
then this multiplies the minimax risk for the inverse problem by a factor
$1/\lambda^{2}$. For the sake of simplicity, 
 we restrict ourselves
to designs ${\bf X}$ such that each column has been normed to $\sqrt{n}$. The collection of
such designs is noted
$\mathcal{D}_{n,p}$. The supremum of the minimax
risks over the designs $\mathcal{D}_{n,p}$ is $+\infty$. Take for instance a
design where the two first columns are equal.  In this section, we only present the
infimum of the minimax risks over $\Theta[k,p]$ as ${\bf X}$ varies across
$\mathcal{D}_{n,p}$:
\begin{eqnarray*}
\mathcal{RI}_F[k] :=  \inf_{{\bf X}\in \mathcal{D}_{n,p}}\mathcal{RI}_F[k,{\bf X}]\
.
\end{eqnarray*}
The quantity $\mathcal{RI}_F[k]$ is interpreted the following way: given 
$(k,n,p)$ what is the
smallest risk we can hope if we use the best possible design? Alternatively, given $n$ observations, what is the intrinsic difficulty of estimating a $k$-sparse vector of size $p$?
We call this quantity the minimax risks for the inverse problem over
$\Theta[k,p]$.\\

In Section \ref{section_inverse}, we also study the corresponding the minimax risks of the inverse problem in the random design case. Let $\mathcal{S}_{p}$ stand for the set of covariance matrices that contain only ones on the diagonal. We respectively define  the minimax risk of estimation over $\Theta[k,p]$ for a covariance $\Sigma$ and the minimax risk of estimation over $\Theta[k,p]$ as 
\begin{equation}\label{defi_risque_minimax_inverse_random}
\mathcal{RI}_R[k,\Sigma]:=
\inf_{\widehat{\theta}}\sup_{\theta_0\in\Theta[k,p]}\mathbb{E } _ { \theta_0 ,
\sigma }
 [ \|\theta_0-\widehat{\theta}\|_p^2/\sigma^2]\quad \text{ and }\quad  \mathcal{RI}_R[k]:= \inf_{\Sigma\in\mathcal{S}_p}\mathcal{RI}_R[k,\Sigma]\ .
\end{equation}

\subsubsection{Results}

In Table \ref{tab3}, we provide the minimax risks in fixed
design for different
values of $(k,n,p)$ (see also Figure \ref{figure03}).

\begin{Table}[h]
\caption{Order of the minimax risks  $\mathcal{RI}_F[k]$ for the inverse problem  over
$\Theta[k,p]$\label{tab3} } 
\begin{center}

\begin{tabular}{c|c|c}
{\normalsize ${\bf (k,n,p)}$}  & ${\displaystyle k\log(p)\leq Cn}$ &
${\displaystyle k\log(p)\gg n\log(n)}$\\ \hline&& \\
{\normalsize Minimax risk} $\mathcal{RI}_F[k]$& $C\frac{k}{n}\log\left(\frac{\displaystyle p}{\displaystyle k}\right)$ & 
$\exp\left[C'\frac{\displaystyle k}{\displaystyle n}\log\left(\frac{\displaystyle p}{\displaystyle k}\right)\right]$.
\end{tabular}
 
\end{center}
\end{Table}

\begin{figure}[hptb]
\begin{center}
 
\includegraphics[width=6cm,angle=0]{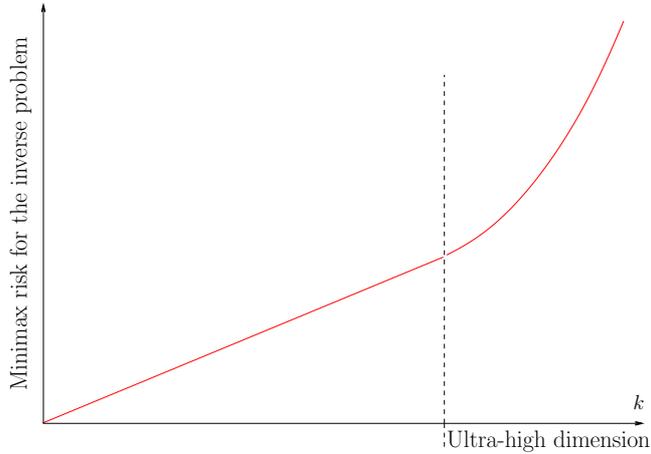}

\caption{Order of magnitude of the minimax risk $\mathcal{RI}_F[k]$ for the inverse problem (${\bf P_3}$) over $\Theta[k,p]$ as a function of $k$. The corresponding bounds are stated in Section \ref{section_inverse}.}\label{figure03}  

\end{center}

\end{figure}

If $k\log(p/k)$ remains smaller than $n$, it is possible to recover
the risk $Ck\log(p/k)$ for ``good" designs. This risk is for instance
achieved by the Dantzig
selector of Cand\`es and Tao~\cite{candes07} for nearly-orthogonal designs,
that roughly means that the restricted eigenvalues $\varPhi_{3k,+}({\bf X})$ and
$\varPhi_{3k,-}({\bf X})$ of ${\bf X}^T{\bf X}$ are close
to one. In an ultra high-dimensional setting, it is not anymore
possible to build nearly-orthogonal designs ${\bf X}$ and the minimax risk of
the inverse
problem blows up as for testing problems (${\bf P_1}$) or prediction problems
in Gaussian design (${\bf P_2}$). Moreover, adaptation to the sparsity $k$ and to the variance $\sigma^2$ is possible for the inverse problem. 
As explained  in Section \ref{section_inverse}, the quantities $\mathcal{RI}_R[k,\Sigma]$ and $\mathcal{RI}_R[k]$ behave somewhat similarly to their fixed design counterpart.\\

In Section \ref{section_inverse}, we also discuss the
consequences of the minimax bounds on the problem of support estimation (${\bf
P_4}$).  We prove that, in an ultra-high dimensional setting, it is not
possible to
estimate with high probability the support of $\theta_0$ unless the ratio
$\|\theta_0\|_p^2/\sigma^2$ is larger than $C_1(p/k)^{C_2k/n}$.
In fact, even the problems of  support estimation is almost
hopeless in an ultra-high dimensional setting.

\section{Hypothesis Testing}\label{section_linear_testing}

We start by the testing problem (${\bf P_1}$) because some minimax
lower bounds in prediction
and inverse estimation derive from testing considerations.

\subsection{Known variance}

\subsubsection{Gaussian design}\label{section_test_aleat_variance_connue}

As mentioned in the introduction, the knowledge of $\sigma^2=\var(Y|X)$ is
really unlikely in many practical applications. Nevertheless, we study this
case to
enhance the differences between known and unknown conditional variances.
Furthermore, these results turn out to be useful for analyzing the minimax
separation
distances in fixed design problems. We recall that the notions of minimax
separation distances $\rho^*_F[k,{\bf X}]$, $\rho^*_F[k]$, $\rho^*_R[k,\Sigma]$, and $\rho^*_R[k]$ have been defined in Section \ref{section_resultat_test}.

\begin{thrm}\label{thrm_minoration_minimax_variance_conditionnelle} 

Assume that $\alpha+\delta\leq 53\%$, $p\geq n^2$, and that $n\geq 8\log(2/\delta)$. For any $1\leq k\leq n$, the $(\alpha,\delta)$-minimax separation distance
(\ref{definition_vitesse_minimax_test_gaussien}) with covariance $I_p$ is lower
bounded by
\begin{eqnarray}\label{minoration_test_random_eq_known_variance2}
 (\rho_R^*[k,I_p])^2\geq
C\left[\frac{k}{n}\log\left(p\right)\wedge \frac{1}{\sqrt{n}}\right]\ .
\end{eqnarray}
 For any  $1\leq k\leq p$ and any covariance $\Sigma$, we have 
\begin{eqnarray}\label{majoration_test_random_design_variance_connue_eq}
(\rho_R^*[k,\Sigma])^2\leq 
C(\alpha,\delta)\left[\frac{k}{n}\log\left(p\right)\wedge
\frac{1}{\sqrt{n}}\right]\ .
\end{eqnarray}
Furthermore, this upper bound is simultaneously achieved for all $k$ and $\Sigma$ by a procedure $T_{\alpha}^*$ (defined in Section \ref{section_testetoile}).
\end{thrm}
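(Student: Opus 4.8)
The plan is to establish the lower bound \eqref{minoration_test_random_eq_known_variance2} and the upper bound \eqref{majoration_test_random_design_variance_connue_eq} by separate arguments. Since the upper bound follows from a fairly direct analysis of an explicit procedure, the bulk of the work is the lower bound, which I would attack with Le Cam's (second moment / mixture) method.

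\textbf{Upper bound.} I would realize $T^*_\alpha$ as an aggregation of two families of tests, rejecting $\mathbf{H_0}$ as soon as one of them does and splitting the level among them. The first is a global energy test based on $\|\mathbf{Y}\|_n^2/\sigma^2$ (equivalently, the projection of $\mathbf{Y}$ onto the whole column space of $\mathbf{X}$, which is all of $\mathbb{R}^n$ almost surely since $p\geq n$). The second is a family of \emph{scan} tests indexed by $j\in\{1,\ldots,n\}$, with statistic $\max_{m\in\mathcal{M}(j,p)}\|\Pi_m\mathbf{Y}\|_n^2/\sigma^2$, where $\Pi_m$ is the orthogonal projector onto the column space of $\mathbf{X}_m$. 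The crucial point making the calibration valid \emph{for every} $\Sigma$ is that under $\mathbf{H_0}$ the response $\mathbf{Y}=\eps$ is independent of $\mathbf{X}$, so conditionally on $\mathbf{X}$ each statistic is a central chi-square whose law does not depend on $\Sigma$; a Bonferroni allocation over $j$ and over the $\binom{p}{j}$ subsets then yields level $\alpha$ with thresholds of order $j\log(p/j)$. For the power, under an alternative with support $m_0$ of size $k$ one has $\Pi_{m_0}\mathbf{Y}=\mathbf{X}\theta_0+\Pi_{m_0}\eps$, so $\|\Pi_{m_0}\mathbf{Y}\|_n^2/\sigma^2$ is a noncentral chi-square with noncentrality $\|\mathbf{X}\theta_0\|_n^2/\sigma^2$, and $\|\mathbf{X}\theta_0\|_n^2/\|\sqrt{\Sigma}\theta_0\|_p^2\sim\chi^2_n$ concentrates around $n\rho^2$. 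Comparing the noncentrality to the thresholds gives power $1-\delta$ as soon as $\rho^2\gtrsim \frac{k\log p}{n}$ (scan at $j=k$) or $\rho^2\gtrsim\frac{1}{\sqrt n}$ (energy test), whence the minimum; adaptivity in $k$ is automatic since the test aggregates over all $j$.

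\textbf{Lower bound.} I would use a single prior: draw a support $S$ uniformly in $\mathcal{M}(k,p)$, independent Rademacher signs $(\eta_i)_{i\in S}$, and set $\theta=\mu\sum_{i\in S}\eta_i e_i$ with $\mu^2=\rho^2\sigma^2/k$, so $\|\theta\|_p^2=\rho^2\sigma^2$ exactly and the prior sits precisely on $\{\|\sqrt{\Sigma}\theta_0\|_p=\rho\sigma\}$ when $\Sigma=I_p$. Writing $\bar{\mathbb{P}}$ for the induced mixture of $n$-fold product laws and $\mathbb{P}_0$ for the null, it suffices to bound $\chi^2(\bar{\mathbb{P}},\mathbb{P}_0)$ by a small constant when $\rho^2\asymp \frac{k\log p}{n}\wedge\frac{1}{\sqrt n}$: then $\|\bar{\mathbb{P}}-\mathbb{P}_0\|_{TV}$ is bounded away from $1$ and, using $\alpha+\delta\le 53\%$ and $n\ge 8\log(2/\delta)$ in the standard reduction, no level-$\alpha$ test has power $1-\delta$. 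For two independent draws $\theta,\theta'$ each sample $(Y,X)$ is centered Gaussian with covariance $\begin{pmatrix}\sigma^2+\|\theta\|_p^2 & \theta^T\\ \theta & I_p\end{pmatrix}$, and a direct Gaussian (determinant) computation gives the one-sample affinity $I(\theta,\theta')=\big[(1-\langle\theta,\theta'\rangle/\sigma^2)^2-\|\theta\|_p^2\|\theta'\|_p^2/\sigma^4\big]^{-1/2}$, so $\chi^2(\bar{\mathbb{P}},\mathbb{P}_0)+1=\mathbb{E}_{\theta,\theta'}[I(\theta,\theta')^n]$. Expanding $\log I$ isolates two contributions: a constant term of order $\rho^4$ coming from the common variance inflation $\|\theta\|_p^2=\rho^2\sigma^2$, which forces $n\rho^4\lesssim1$, i.e. $\rho^2\lesssim 1/\sqrt n$; and a term $\langle\theta,\theta'\rangle/\sigma^2$ whose contribution, after integrating the signs, is $\cosh(n\mu^2/\sigma^2)^{|S\cap S'|}$, and averaging the hypergeometric overlap $|S\cap S'|$ keeps this bounded provided $n\rho^2/k\lesssim\log(p/k^2)$, i.e. $\rho^2\lesssim \frac{k\log(p/k^2)}{n}$. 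This second term binds only when $k\log p\lesssim\sqrt n$, a regime in which $k$ is small and $\log(p/k^2)\asymp\log p$, recovering the $\frac{k\log p}{n}$ threshold. Taking $\rho^2$ proportional to the minimum of the two thresholds keeps $\chi^2$ bounded and yields \eqref{minoration_test_random_eq_known_variance2}.

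The main obstacle is the honest control of $\mathbb{E}_{\theta,\theta'}[I(\theta,\theta')^n]$ in the lower bound: one must keep the exact affinity rather than its linearization, since on the event $S=S'$ with matching signs $\langle\theta,\theta'\rangle/\sigma^2$ can be as large as $\rho^2$ and the naive expansion fails. Controlling the tails of the hypergeometric overlap and ensuring that the variance-inflation term and the overlap term decouple cleanly—so that the two constraints $n\rho^4\lesssim1$ and $n\rho^2/k\lesssim\log p$ can be imposed simultaneously—is the delicate part, and it is precisely this variance-inflation term that produces the $1/\sqrt n$ floor absent from the Gaussian sequence model.
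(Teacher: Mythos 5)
Your proposal is correct and follows essentially the same route as the paper: the upper bound is the paper's own $T^*_\alpha$ (Bonferroni aggregation of chi-square scan statistics over supports plus a global energy statistic $\|{\bf Y}\|_n^2$), and the lower bound uses the identical uniform-support/Rademacher-sign prior with the Ingster--Le Cam second-moment method, in which the expectation over the Gaussian design of the conditional chi-square term reduces to the determinant $\left[(1-\langle\theta,\theta'\rangle/\sigma^2)^2-\rho^4\right]^{-n/2}$ (the paper computes this same quantity via a rank-four block determinant in Lemma \ref{lemma_majoration_design_standard}), which then splits into the variance-inflation constraint $n\rho^4\lesssim 1$ and the hypergeometric-overlap constraint $n\rho^2/k\lesssim\log(1+p/k^2)$ handled by Baraud's argument, with $p\geq n^2$ used exactly as you say to replace $\log(p/k^2)$ by $\log p$ in the binding regime. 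The only differences are organizational: the paper keeps the integration over ${\bf Y}$ and over ${\bf X}$ separate so the conditional bound can be recycled for the fixed-design lower bound, and its scan procedure truncates at $k^*$ (where $k^*[1+\log(p/k^*)]\geq\sqrt{n}$) rather than running over all $j\leq n$.
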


\begin{remark}{\bf [Adaptation to sparsity]}
It follows from Theorem \ref{thrm_minoration_minimax_variance_conditionnelle} that adaptation to the sparsity is possible and that the optimal 
optimal separation distance is of order 
\begin{equation}\label{equation_ordre_vitesse_minimax}
 \frac{k}{n}\log\left(p\right)\wedge
\frac{1}{\sqrt{n}}\ ,
\end{equation}
for all sparsities $k$ between $1$ and $n$. 
\end{remark}

\begin{remark}{\bf [Correlated design]}
The upper bound  (\ref{majoration_test_random_design_variance_connue_eq})  is valid for any covariance matrix 
$\Sigma$. In contrast, the minimax lower bound (\ref{minoration_test_random_eq_known_variance2}) is restricted to the case $\Sigma=I_p$.  This
implies that there exists some constant $C(\alpha,\delta)$ such that ,
$$\rho_R^*[k,I_p]\geq C(\alpha,\delta) \sup_{\Sigma}\rho_R^*[k,\Sigma]= C(\alpha,\delta)\rho_R^*[k]\ .$$
In other words, the testing problem is more complex (up to constants) for an independent design than for a correlated design.
\end{remark}

\begin{remark}{\bf [Which logarithmic term in the bound: $\log(p)$ or $\log(p/k)$?]}
In the proof of Theorem \ref{thrm_minoration_minimax_variance_conditionnelle}, we derive the following bounds 
\begin{eqnarray*}
 (\rho_R^*[k,I_p])^2&\geq&
C\left[\frac{k}{n}\log\left(1+\frac{p}{k^2}\right)\wedge \frac{1}{\sqrt{n}}\right]\ ,\\
(\rho_R^*[k,\Sigma])^2 &\leq& 
C(\alpha,\delta)\left[\frac{k}{n}\log\left(\frac{ep}{k}\right)\wedge
\frac{1}{\sqrt{n}}\right]\ .
\end{eqnarray*}
 These two bounds are of order of (\ref{equation_ordre_vitesse_minimax}) as it is assumed that $p\geq n^2$. However, the dependency of the logarithmic terms on $k$ in the last bounds do not allow to provide the minimax separation distance when $p=n$ and $k$ is close to $\sqrt{n}$.
For instance, if $p=n$ and $k=\sqrt{n}/\log(n)$, the two bounds only match up to a factor $\log(n)/\log\log(n)$. The non-asymptotic minimax bounds of Baraud~\cite{baraudminimax} in the Gaussian sequence model suffer the same weakness. Up to our knowledge the dependency on $\log(k)$ of the minimax separation distances has only been captured in an asymptotic setting~\cite{arias,Ingster10} ($(k,p,n)\rightarrow \infty$).
\end{remark}

\subsubsection{Fixed design}

The separation distances are similar to the  Gaussian design case.

\begin{thrm}\label{prte_minoration_test_fixed}
Assume that $\alpha+\delta\leq 33\%$, $p\geq n^2\geq C(\alpha,\delta)$, and that $n\geq 8\log(2/\delta)$.
For any $1\leq k\leq n$, there exist some $n\times
p$ designs ${\bf X}$ such that 
\begin{eqnarray}\label{minoration_test_fixed_eq_known_variance}
 (\rho_F^*[k,{\bf X}])^2\geq 
C\left[\frac{k}{n}\log\left(p\right)\wedge \frac{1}{\sqrt{n}}\right]\ .
\end{eqnarray}
For any  $1\leq k\leq p$ and any design ${\bf X}$, we have 
\begin{eqnarray}\label{majoration_test_fixed_design_variance_connue_eq}
(\rho_F^*[k,{\bf X}])^2
\leq
C(\alpha,\delta)\left[\frac{k}{n}\log\left(p\right)\wedge
\frac{1}{\sqrt{n}}\right]\ .
\end{eqnarray}
Furthermore, this upper bound is simultaneously achieved for all $k$ and ${\bf X}$ by a procedure $T_{\alpha}^*$ (defined in Section \ref{section_testetoile}).
\end{thrm}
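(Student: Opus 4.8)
The statement splits into a design-uniform upper bound and a lower bound producing one hard design for each value of $k$, and I would treat the two separately. For the upper bound \eqref{majoration_test_fixed_design_variance_connue_eq} the claim is that a single procedure $T_\alpha^*$ works for every $\mathbf{X}$, so the plan is to analyse $T_\alpha^*$ directly once it is defined in Section \ref{section_testetoile}. I expect it to combine, through a Bonferroni split of the level $\alpha$, an energy statistic $\|\mathbf{Y}\|_n^2/\sigma^2$ with a scan over the subsets $m\in\mathcal{M}(k,p)$ of the projection statistics $\|\Pi_m\mathbf{Y}\|_n^2/\sigma^2$, where $\Pi_m$ projects onto the span of $\mathbf{X}_m$. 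Under $\mathbf{H_0}$ these are $\chi^2$ variables independent of $\mathbf{X}$, so calibrating the energy threshold at order $n+\sqrt n$ and each projection threshold at order $k+\log\binom pk\asymp k\log(p/k)$ controls the level for every design. For the power I would use that $\Pi_m\mathbf{X}\theta_0=\mathbf{X}\theta_0$ whenever $\supp(\theta_0)\subset m$, so that the relevant projection statistic has noncentrality $\|\mathbf{X}\theta_0\|_n^2/\sigma^2$; a deviation bound then shows that one of the two subtests rejects with probability at least $1-\delta$ as soon as $\|\mathbf{X}\theta_0\|_n^2/(n\sigma^2)\gtrsim\frac kn\log(p/k)\wedge\frac1{\sqrt n}$, which is \eqref{majoration_test_fixed_design_variance_connue_eq} since $\log(p/k)\asymp\log p$ when $p\geq n^2$.

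The core of the work is the lower bound \eqref{minoration_test_fixed_eq_known_variance}, for which I would use the mixture method \cite{yu,baraudminimax}. It suffices to exhibit, for each $k$, a design $\mathbf{X}$ and a prior $\mu$ supported on $\{\theta_0\in\Theta[k,p]:\|\mathbf{X}\theta_0\|_n^2\geq n\rho^2\sigma^2\}$, with $\rho^2$ of the announced order, such that the chi-square affinity between $\mathbb{P}_0=\mathcal{N}(0_n,\sigma^2 I_n)$ and the mixture $\bar{\mathbb{P}}_\mu=\int\mathcal{N}(\mathbf{X}\theta_0,\sigma^2 I_n)\,d\mu$ stays below a constant depending only on $\alpha,\delta$; by $\alpha+\delta\leq33\%$ this precludes any level-$\alpha$ test from having type II error below $\delta$ on the support of $\mu$. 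For the Gaussian model this affinity has the closed form
\begin{equation}
\chi^2(\bar{\mathbb{P}}_\mu,\mathbb{P}_0)+1=\mathbb{E}_{\theta_0,\theta_0'}\big[\exp(\langle\mathbf{X}\theta_0,\mathbf{X}\theta_0'\rangle_n/\sigma^2)\big],
\end{equation}
with $\theta_0,\theta_0'$ drawn independently from $\mu$. I would take $\mathbf{X}$ to be a renormalised realisation of the standard Gaussian ensemble and let $\mu$ be the law of $a\sum_{i\in S}\xi_i e_i$, where $S$ is a uniform $k$-subset, the $\xi_i$ are independent Rademacher signs, and $a^2=\rho^2\sigma^2/k$ is calibrated so that $\|\theta_0\|_p^2=\rho^2\sigma^2$.

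Writing $\lambda=n\rho^2/k$ and using that $\mathbf{X}^T\mathbf{X}$ concentrates around $nI_p$ on $2k$-sparse supports, I expect the diagonal part of the expectation to reduce to the overlap moment $\mathbb{E}[\cosh(\lambda)^{|S\cap S'|}]\leq\exp(\tfrac{k^2}{p}(\cosh\lambda-1))$, which stays bounded exactly when $\lambda\lesssim\log(1+p/k^2)$, i.e. $\rho^2\lesssim\frac kn\log(1+p/k^2)$; and the off-support Gram fluctuations $\langle\mathbf{X}_i,\mathbf{X}_j\rangle\sim\mathcal{N}(0,n)$ for $i\neq j$ to contribute, after the sign average, a Gaussian-chaos factor of order $\exp(C\rho^4 n)$, bounded exactly when $\rho^2\lesssim1/\sqrt n$. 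Choosing $\rho^2=c(\alpha,\delta)[\frac kn\log(1+p/k^2)\wedge\frac1{\sqrt n}]$ keeps both factors bounded, and $p\geq n^2$ together with $k\leq n\leq\sqrt p$ turns $\log(1+p/k^2)$ into $\log p$, yielding \eqref{minoration_test_fixed_eq_known_variance}.

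The hardest step will be controlling this affinity in the ultra-high-dimensional range $k\log p\gg n$, where the isometry $\|\mathbf{X}\theta\|_n^2/n\approx\|\theta\|_p^2$ fails globally: there the $1/\sqrt n$ saturation has to be read off the spectral deviations of the sparse Gram blocks rather than from a clean isometry, and I must verify that one realisation of $\mathbf{X}$ simultaneously carries the mild isometry needed for the small-overlap term and the sub-Gaussian Gram control needed for the dense correction, via a union bound over the $\binom p{2k}$ relevant supports that is affordable precisely because $p\geq n^2$. In the complementary regime $k\log p\leq\sqrt n$, which is never ultra-high, I could instead transfer the bound from the Gaussian-design inequality \eqref{minoration_test_random_eq_known_variance2} of Theorem \ref{thrm_minoration_minimax_variance_conditionnelle}: for a fixed sparse $\theta_0$ one has $\|\mathbf{X}\theta_0\|_n^2/n\approx\|\theta_0\|_p^2$, so a Fubini argument turns hardness on average over the random design into the existence of a hard fixed design. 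Finally, since $\rho_F^*[k,\mathbf{X}]$ is nondecreasing in $k$, the $1/\sqrt n$ bound need only be secured at the threshold sparsity $k\log p\asymp\sqrt n$ and then propagated to all larger $k$.
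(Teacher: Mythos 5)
Your upper-bound plan and your overall framework (mixture prior of Rademacher-signed uniform $k$-subsets, $\chi^2$-affinity via Lemma \ref{lemma_le_cam_method}) coincide with the paper's. The genuine problem is the step you yourself flag as hardest: controlling the conditional affinity $\mathbb{E}_{\theta_0,\theta_0'}[\exp(\langle\mathbf{X}\theta_0,\mathbf{X}\theta_0'\rangle_n/\sigma^2)]$ for a \emph{single realisation} $\mathbf{X}$ by combining a ``mild isometry'' with sub-Gaussian Gram control, uniformly over the $\binom{p}{2k}$ relevant supports. In the ultra-high regime this uniform control provably does not exist: the paper itself shows (Eq.~\eqref{minoration_valeur_propre_restreinte_design_fixe} in Proposition \ref{cor_minoration_inverse_dimension_pas_trop_grande}, see also Remark \ref{remarque_matrices}) that \emph{every} $n\times p$ design satisfies $\varPhi_{2k,-}({\bf X})\leq C_1 n (k/ep)^{C_2k/n}$, so no realisation carries any isometry-type statement over all sparse supports once $k\log(p/k)\gtrsim n$. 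Even the weaker ingredient you need — a bound on $\sum_{i\in S,j\in S'}\langle \mathbf{X}_i,\mathbf{X}_j\rangle_n^2$ holding simultaneously for all support pairs — degrades under the union bound: with $p^{2k}$ pairs and sub-exponential tails you must absorb deviations of order $nk\log p$, which inflates the sign-averaged chaos exponent from $\rho^4 n$ to $\rho^4 n\log(p)/k$ and pushes the provable separation strictly below the target $\frac{k}{n}\log p\wedge\frac{1}{\sqrt n}$ in part of the range (e.g.\ small $k$ with $\log^3 p> n$). So the route you designate as the core of the proof would fail or lose the rate.

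The paper avoids uniform-over-support control entirely, and your own fallback is in fact the correct (and the paper's) mechanism, only mis-scoped. In the proof of Theorem \ref{thrm_minoration_minimax_variance_conditionnelle}, the expectation over the Gaussian design is taken \emph{first}: for each pair of prior atoms, $\mathbb{E}_{\bf X}[\exp(\langle \mathbf{X}\theta_{m_1,\xi^{(1)}},\mathbf{X}\theta_{m_2,\xi^{(2)}}\rangle_n)]$ is computed in closed form as a determinant $|I-\lambda^2C|^{-n/2}$ (Lemma \ref{lemma_majoration_design_standard}, Eq.~\eqref{expression_lmu2}); the factor $\exp(8n\rho^4)$ — hence the $1/\sqrt n$ cap — falls out of this determinant, and the remaining average over subsets is Baraud's hypergeometric computation. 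This yields $\mathbb{E}_{\bf X}[\mathbb{E}_{0_p,1}\{L^2_{\mu_\rho}|{\bf X}\}]\leq 1+\eta^2$ in \emph{all} regimes, and Markov's inequality then produces a positive-probability set of realisations ${\bf X}$ on which the conditional affinity is bounded — this Fubini--Markov transfer is exactly why Lemma \ref{lemma_majoration_design_standard} is stated with the integrations over ${\bf X}$ and ${\bf Y}$ separated, and why the constants in Theorem \ref{prte_minoration_test_fixed} ($\alpha+\delta\leq 33\%$) are degraded relative to the Gaussian-design case ($53\%$). Note also that the transfer does not require the ``clean isometry'' you invoke to justify restricting it to $k\log p\leq\sqrt n$: the only geometric fact needed is that each prior atom lands in the fixed-design alternative $\{\|{\bf X}\theta\|_n\geq \sqrt n\rho'\sigma\}$, and that is a per-vector $\chi^2_n$ concentration statement (probability $1-e^{-cn}$, independent of $p$, $k$), again handled on average over the prior rather than uniformly. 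Had you made this Fubini--Markov argument your primary route — possibly combined with your (valid) monotonicity-in-$k$ remark — the proof would close; as written, the step you rely on in the ultra-high regime is the one that cannot work.
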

As for the random design case, we conclude that adaptation to the sparsity is possible and that $(\rho_F^*[k])^2$ is of order $\frac{k}{n}\log\left(p\right)\wedge
\frac{1}{\sqrt{n}}$.
In fact, the proof shows that, with large probability, designs ${\bf X}$ whose components are independently sampled from a standard normal variable satisfy (\ref{minoration_test_fixed_eq_known_variance}).

Arias-Castro et al.~\cite{arias} and Ingster et al.~\cite{Ingster10} have recently provided the asymptotic minimax separation distance with exact constant for known variance when the design satisfies very specific conditions. Theorem \ref{prte_minoration_test_fixed} provides the non-asymptotic counterpart of their result, but  the constants in (\ref{minoration_test_fixed_eq_known_variance}) and (\ref{majoration_test_fixed_design_variance_connue_eq}) are not optimal.

\subsection{Unknown variance}

\subsubsection{Preliminaries}

We now turn to the study of the minimax separation distances when the variance
$\sigma^2$ is unknown. In Section \ref{section_resultat_test}, we have
introduced the notions of
$\delta$-separation distances and $(\alpha,\delta)$-minimax separation
distances when the variance $\sigma^2$. We now define their counterpart for an
unknown variance $\sigma^2$.

Let us consider a test $\Phi_{\alpha}$ of the hypothesis ${\bf H_0}$ for the
linear regression model with fixed design ${\bf X}$. We say that $\Phi_{\alpha}$
has a level $\alpha$ under unknown variance if
$$\sup_{\sigma>0}\mathbb{P}_{0_p,\sigma}[\Phi_{\alpha}({\bf Y},{\bf X})>0]\leq
\alpha\ .$$
This means that the type I error probability is controlled uniformly over all
variance $\sigma^2$. Similarly, we want to control the type II error
probabilities uniformly over all variances. The $\delta$-separation distance
$\rho_{F,U}[\Phi_{\alpha},k,{\bf X}]$ of
$\Phi_{\alpha}$ over $\Theta[k,p]$ for unknown variance is defined by
\begin{eqnarray}\label{distance_variance_inconnue_fixe}
\rho_{F,U}[\Phi_{\alpha},k,{\bf X}]:=
\inf\Bigg\{\rho>0,\, \inf_{\begin{array}{c}
\scriptstyle \sigma>0,\ \theta_0\in\Theta[k,p],\\                           
\scriptstyle \|{\bf X}\theta_0\|_n\geq
\sqrt{n}\rho\sigma                          
 \end{array}
}\mathbb{P}_{\theta_0,\sigma}[\Phi_{\alpha}=1]\geq
1-\delta\Bigg\}\ .
\end{eqnarray}
Hence, $\rho_{F,U}[\Phi_{\alpha},k,{\bf X}]$ corresponds to
the minimal distance such that the hypotheses $\{\theta_0=0_p\text{ and }\sigma>0\}$ and $\{
\theta_0\in\Theta[k,p]\text{ and }\sigma>0\ ,\ \|{\bf
X}\theta_0\|_n^2\geq n\rho_{F,U}^2[\Phi_{\alpha},k,{\bf X}]\sigma^2\}$
are
well separated by the test $\Phi_{\alpha}$. Taking the infimum over all level
$\alpha$ tests, we get the $(\alpha,\delta)$ minimax
separation distance over $\Theta[k,p]$ with design ${\bf X}$ and unknown
variance is
\begin{eqnarray}\label{distance_minimax_variance_inconnue_fixe}
\rho^{*}_{F,U}[k,{\bf X}]:=
\inf_{\Phi_{\alpha}}\rho_{F,U}[\Phi_{\alpha},k,{\bf X}]\ .
\end{eqnarray}
Finally, $\rho^{*}_{F,U}[k]:=\sup_{\bf
X}\rho^{*}_{F,U}[k,{\bf X}]$ corresponds to the $(\alpha,\delta)$-minimax
separation
distance over $\Theta[k,p]$ with the ``worst-case designs".

In the Gaussian design, we define $\rho_{R,U}[\Phi_{\alpha},k,\Sigma]$,
$\rho^*_{R,U}[k,\Sigma]$, and $\rho^*_{R,U}[k]$ analogously to
(\ref{distance_variance_inconnue_fixe}) and
(\ref{distance_minimax_variance_inconnue_fixe}) by replacing the norm $\|{\bf
X}\theta_0\|_n/\sqrt{n}$ by $\|\sqrt{\Sigma}\theta_0\|_p$.

\subsubsection{Gaussian design}

Minimax bounds have been proved in ~\cite{villers1} in the non ultra-high dimensional setting. The next theorem encompasses high dimensional and ultra-high dimensional settings.

\begin{thrm}\label{thrm_minimax_testing}
Suppose that $\alpha+\delta\leq 53\%$ and that $p \geq n\geq
8\log(2/\delta)$.
For any $1\leq k\leq \lfloor p^{1/3}\rfloor$, the
$(\alpha,\delta)$-minimax separation distance over $\Theta[k,p]$ with covariance
$I_p$  and unknown variance satisfies
\begin{eqnarray}\label{borne_minoration_test}
(\rho^*_{R,U}[k,I_p])^2\geq  C_1\frac{k}{n}\log\left(p\right)
\exp\left[C_2\frac{k}{n}\log\left(p\right)\right]\ . 
\end{eqnarray}
For any $1\leq k\leq n/2$ and  any covariance $\Sigma$, we have
\begin{eqnarray}\label{borne_majoration_test}
 (\rho^*_{R,U})^2[k,\Sigma] \leq 
C_1(\alpha,\delta)\frac{k}{n}\log\left(\frac{ep}{k}\right)
\exp\left[C_2(\alpha,\delta)\frac{k}{n}\log\left(\frac{ep}{k}\right)\right] \ .
\end{eqnarray}
Furthermore, this upper bound is simultaneously achieved for all $k$ and $\Sigma$ by a procedure $T_{\alpha}$ (defined in Section \ref{section_test}).
\end{thrm}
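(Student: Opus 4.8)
The plan is to establish the lower bound (\ref{borne_minoration_test}) by a Bayesian reduction that exploits the composite nature of the null when $\sigma$ is unknown, and the upper bound (\ref{borne_majoration_test}) by analyzing the scale-free procedure $T_\alpha$ of Section \ref{section_test}; I treat the two separately.

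The observation driving the lower bound is that, because $\sigma$ is unknown, the null is composite and contains every product law $P_{0,\tilde\sigma}$ under which ${\bf Y}\perp{\bf X}$ with $Y_i\sim\mathcal N(0,\tilde\sigma^2)$. Under a standard Gaussian design ($\Sigma=I_p$), the marginal law of each $Y_i$ under the alternative $P_{\theta_0,\sigma}$ is $\mathcal N(0,\|\theta_0\|_p^2+\sigma^2)$, so choosing the matched variance $\tilde\sigma^2=\sigma^2+\|\theta_0\|_p^2$ makes the two marginals of ${\bf Y}$ coincide exactly, and any signal can only be detected through the \emph{dependency} between ${\bf Y}$ and ${\bf X}$. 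I would therefore fix $s^2=\|\theta_0\|_p^2=\rho^2\sigma^2$, set $\tilde\sigma^2=\sigma^2+s^2$, and build a prior $\pi$ on $\Theta[k,p]$ by drawing a support $m$ uniformly in $\mathcal M(k,p)$ and assigning Rademacher-signed magnitudes $\pm s/\sqrt k$. Indistinguishability then follows from bounding the $\chi^2$-divergence $\chi^2(\bar P_\pi,P_{0,\tilde\sigma})=\mathbb{E}_{a,b}[I_{ab}^{\,n}]-1$, where $a,b$ are independent draws from $\pi$ and $I_{ab}=\mathbb{E}_{0,\tilde\sigma}[L_1(a)L_1(b)]$ is the single-sample expectation of the product of likelihood ratios.

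The integral $I_{ab}$ is Gaussian and depends on $a,b$ only through the overlap $a^Tb$: since $X$ enters the integrand solely via $(X^Ta,X^Tb)$, it reduces to a three-dimensional Gaussian integral over $(Y,X^Ta,X^Tb)$, which after completing the square yields a closed form that is an explicit function of $a^Tb$, $s^2$, $\sigma^2$ and $\tilde\sigma^2$. Disjoint supports give $a^Tb=0$ and contribute $I_{ab}=1$, so the size of the divergence is governed by the distribution of the overlap, whose cardinality is hypergeometric and whose signed value carries the Rademacher structure. The core of the argument, and the step I expect to be the main obstacle, is to show that $\mathbb{E}_{a,b}[I_{ab}^{\,n}]$ remains bounded as long as $\rho^2=s^2/\sigma^2$ lies below $C_1\frac{k}{n}\log(p)\exp[C_2\frac{k}{n}\log(p)]$: this requires expanding $I_{ab}^{\,n}$, tracking how the inflated variance $\tilde\sigma^2$ dampens the per-sample contribution, and summing the moment generating function of the hypergeometric overlap against the entropy factor $\log\binom{p}{k}\asymp k\log(p/k)$. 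The exponential threshold arises precisely from the balance between this entropy term and the variance inflation, which is the subtle effect absent from the known-variance analysis of Theorem \ref{thrm_minoration_minimax_variance_conditionnelle}. A bounded $\chi^2$ gives total variation bounded away from $1$, so under $\alpha+\delta\le 53\%$ no level-$\alpha$ test attains power $1-\delta$, which yields (\ref{borne_minoration_test}).

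For the upper bound I would analyze the scale-free procedure $T_\alpha$, which for every candidate support $m$ of size at most $k$ forms a Fisher-type statistic comparing the energy of ${\bf Y}$ in the column span of ${\bf X}_m$ to the residual energy, and rejects as soon as one such statistic exceeds a Bonferroni-calibrated threshold. Because these statistics are ratios, the unknown $\sigma^2$ cancels and the test has level $\alpha$ uniformly over $\sigma>0$; calibrating against the Fisher null quantiles with a union bound over the $\binom{p}{k}$ subsets (and over $k$, for adaptation) produces a threshold whose logarithm scales like $k\log(ep/k)$. The power analysis then reduces to showing that, for the true support, the noncentrality parameter, which is of order $\|\sqrt{\Sigma}\theta_0\|_p^2/\sigma^2$, pushes the Fisher statistic past this threshold with probability $1-\delta$. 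Since a Fisher quantile in the far tail behaves like a power with exponent of order $n$, exceeding a threshold of logarithmic size $k\log(ep/k)$ demands an effect size of order $\frac{k}{n}\log(ep/k)\exp[C\frac{k}{n}\log(ep/k)]$, which recovers (\ref{borne_majoration_test}) and simultaneously matches the linear regime when $\frac{k}{n}\log(ep/k)$ is small. The main technical care here is the sharp tail control of the (possibly noncentral) Fisher/Beta distribution needed to extract the exponential factor, together with verifying that an arbitrary covariance $\Sigma$ enters only through the integrated quantity $\|\sqrt{\Sigma}\theta_0\|_p$.
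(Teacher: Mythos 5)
Your proposal follows essentially the same route as the paper: for the lower bound you use the identical matched-variance trick (null variance $\sigma^2+\|\theta_0\|_p^2$ so the marginals of ${\bf Y}$ coincide and only the ${\bf Y}$--${\bf X}$ dependence carries signal), a uniform $k$-sparse prior, and a second-moment/$\chi^2$ bound whose key quantity is the hypergeometric overlap --- exactly the paper's Lemma \ref{lemma_minimax_testing}, where the closed form $\bigl(1-\tfrac{\rho^2 Z}{(1+\rho^2)k}\bigr)^{-n}$ is controlled via a binomial comparison and tail bounds; for the upper bound you analyze the same Fisher-statistic Bonferroni procedure $T_\alpha$ with the same scale-invariance and quantile-growth reasoning that the paper carries out in its technical appendix. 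The only cosmetic difference is your Rademacher signs on the prior (the paper uses equal positive entries for this theorem), which changes nothing essential.
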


\begin{remark} {\bf [Minimax adaptation]}
It follows from Theorem \ref{thrm_minimax_testing} that, under unknown variance, adaptation to the sparsity is possible  and that   the minimax separation distance $(\rho^*_{RU}[k])^2$ over $\Theta[k,p]$ is of order 
\begin{equation}\label{vitesse_minimax_adatatation_eq}
C_1(\alpha,\delta)\frac{k}{n}\log\left(p\right)
\exp\left[C_2(\alpha,\delta)\frac{k}{n}\log\left(p\right)\right]\ .
\end{equation}
\end{remark}

\begin{remark}
The condition $k\leq p^{1/3}$ can be replaced by $k\leq p^{1/2-\gamma}$ with
$\gamma>0$, the only difference being that the constants involved in (\ref{borne_minoration_test}) would depend on $\gamma$. 
These conditions are not really restrictive for a sparse
high-dimensional regression since the usual setting is $k\leq n\ll p$. 

Note $k\leq p^{3}$ implies that $\log(p)\leq 3/2\log(p/k)\leq 3\log(p/k^2)$ so that 
we cannot distinguish terms $C_1\log(p)$ from  $C_2\log(p/k^2)$ or $C_3\log(p/k)$. As a consequence (\ref{vitesse_minimax_adatatation_eq}) does not necessarily capture the right dependency on $k$ in the logarithmic terms. This observation also holds for all the next results that require $k\leq p^{1/3}$.
\end{remark}

\begin{remark} {\bf [Dependent design]} As for the known variance case, we have $\rho^*_{R,U}[k,I_p]\geq C(\alpha,\delta)\rho^*_{R,U}[k]$, that is the testing problem is more complex for an independent design than for a correlated design. 
For some covariance matrices $\Sigma$, the minimax separation distance with
covariance $\Sigma$ is much smaller than $\rho^*_{R,U}[k,I_p]$. Verzelen and Villers~\cite{villers1} provide such an example of a matrix $\Sigma$ in (see Propositions 8 and 9). However, the arguments used in the proof of their example are not generalizable  to 
other covariances. In fact, 
the computation of sharp minimax bounds that 
capture the dependency of $\rho^*_{R,U}[k,\Sigma]$ on $\Sigma$
remains an open problem. 
\end{remark}

\subsubsection{Fixed design}

Ingster et al.~\cite{Ingster10} derive the asymptotic minimax separation distance for some specific design when $k\log(p)/n$ goes to $0$. Here, we provide the non asymptotic counterpart that encompass all the regimes.

\begin{prte}\label{prte_minoration_minimax_test_fixed_unknown_variance}
Assume that $\alpha+\delta\leq 26\%$ and that  $p\geq n\geq C(\alpha,\delta)$.
For any $1\leq k\leq \lfloor p^{1/3}\rfloor$, there exist some $n\times
p$ designs ${\bf X}$ such that 
\begin{eqnarray}\label{borne_minoration_test_fixed_unknown}
(\rho^*_{F,U}[k,{\bf X}])^2\geq 
C_1\frac{k}{n}\log\left(p\right)
\exp\left[C_2\frac{k}{n}\log\left(p\right)\right]\ .
\end{eqnarray}
For any $1\leq k\leq n/2$ and any $n\times p$ design ${\bf X}$, we have
\begin{eqnarray}\label{borne_majoration_test_fixed_variance_inconnue}
 (\rho_{F,U}^*[k,{\bf X}])^2 \leq 
C_1(\alpha,\delta)\frac{k}{n}\log\left(\frac{ep}{k}\right)
\exp\left[C_2(\alpha,\delta)\frac{k}{n}\log\left(\frac{ep}{k}\right)\right] \ .
\end{eqnarray}
Furthermore, this upper bound is simultaneously achieved for all $k$ and ${\bf X}$ by a procedure $T_{\alpha}$ (defined in Section \ref{section_test}).
\end{prte}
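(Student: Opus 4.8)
The plan is to derive both halves of the proposition from the already-established Gaussian-design result, Theorem~\ref{thrm_minimax_testing}, exploiting the fact that for a Gaussian design the test acts conditionally on $\mathbf{X}$ and therefore reduces to a fixed-design procedure. For the upper bound \eref{borne_majoration_test_fixed_variance_inconnue} I would use the very same aggregated test $T_{\alpha}$; for the lower bound \eref{borne_minoration_test_fixed_unknown} I would show that one realization of a standard Gaussian matrix $\mathbf{X}$ (i.i.d.\ $\mathcal{N}(0,1)$ entries) satisfies the bound with probability bounded away from zero. Throughout I will use that $k\leq \lfloor p^{1/3}\rfloor$ makes $\log(p)$, $\log(p/k)$ and $\log(ep/k)$ equivalent up to universal constants, so that the orders in \eref{borne_minoration_test_fixed_unknown} and \eref{borne_majoration_test_fixed_variance_inconnue} match.

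For the upper bound, the key observation is that the construction and the power analysis of $T_{\alpha}$ never use the randomness of $\mathbf{X}$: conditionally on $\mathbf{X}$, the Gaussian-design testing problem is exactly the fixed-design problem, with the separation measured by $\|\mathbf{X}\theta_0\|_n/\sqrt{n}$ in place of $\|\sqrt{\Sigma}\theta_0\|_p$. In fact the proof of \eref{borne_majoration_test} first bounds the conditional power in terms of $\|\mathbf{X}\theta_0\|_n$ and only afterwards relates this quantity to $\|\sqrt{\Sigma}\theta_0\|_p$ through concentration; the first, conditional step is precisely the content of \eref{borne_majoration_test_fixed_variance_inconnue}, so I would simply isolate it. The quantitative work is to calibrate the quantiles of the scale-invariant ($F$-type) statistics that $T_{\alpha}$ aggregates over sparsity levels and subsets $m\in\mathcal{M}(k,p)$ so that the level stays $\alpha$ for every $\sigma$, and to check that a signal with $\|\mathbf{X}\theta_0\|_n^2/\sigma^2\geq n\rho^2$ at the stated order is rejected with probability at least $1-\delta$; the exponential factor $\exp[C_2 k\log(ep/k)/n]$ enters through the $\binom{p}{k}$ Bonferroni correction compounded with the price of not knowing $\sigma^2$.

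For the lower bound I would take $\mathbf{X}$ Gaussian and re-use the prior underlying \eref{borne_minoration_test}: a Le Cam reduction to testing the simple null against a mixture $\mu$ over $k$-sparse $\theta_0$ with $\|\theta_0\|_p\asymp\rho\sigma$. The decisive feature of the unknown-variance regime is that the null is the whole scale family $\{\mathcal{N}(0_n,\sigma^2 I_n)\}_{\sigma>0}$, so one must compare the $H_1$ mixture not with $\mathcal{N}(0_n,\sigma^2 I_n)$ but with an inflated null $\mathcal{N}(0_n,\sigma_1^2 I_n)$, $\sigma_1^2=\sigma^2(1+\rho^2)$, which matches the total energy $\mathbb{E}\|\mathbf{Y}\|_n^2$. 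Bounding the $\chi^2$ divergence between these two laws by a constant then shows that they cannot be separated with power $1-\delta$ as long as $\rho^2$ stays below the order $\tfrac{k}{n}\log(p)\exp[C\tfrac{k}{n}\log(p)]$, which is exactly the claimed blow-up. To pass from the random to the fixed design I would note that for each fixed atom $\theta_0$ of $\mu$ one has $\mathbf{X}\theta_0\sim\mathcal{N}(0_n,\|\theta_0\|_p^2 I_n)$, so $\|\mathbf{X}\theta_0\|_n^2/n$ concentrates around $\|\theta_0\|_p^2$ by a $\chi^2_n$ bound; a union bound over the finite support of $\mu$ gives an event of probability bounded away from zero on which the fixed-design separation $\|\mathbf{X}\theta_0\|_n/\sqrt{n}$ is within constants of $\|\theta_0\|_p$, yielding a design with $(\rho^*_{F,U}[k,\mathbf{X}])^2$ of the claimed order.

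The hard part will be the transfer itself, because the likelihood structure of the fixed-design problem differs genuinely from the random-design one and \eref{borne_minoration_test} cannot simply be quoted: I must re-run the $\chi^2$ computation with $\mathbf{X}$ frozen, controlling the cross terms $\langle\mathbf{X}\theta,\mathbf{X}\theta'\rangle_n$ between distinct atoms, which for a Gaussian realization concentrate near $\langle\theta,\theta'\rangle_p$ only pointwise --- uniform control over all $k$-sparse directions fails in the ultra-high dimensional regime $k\log(p/k)\gg n$, where $\varPhi_{k,-}(\mathbf{X})$ degenerates. The delicate point is therefore to choose $\mu$ so that only the pairwise concentration of finitely many inner products is required, and to verify that the induced fluctuations do not erode the exponential separation. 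Matching the numerical constants so that $\alpha+\delta\leq 26\%$ suffices, and checking the boundary regime $k\leq p^{1/3}$, is the remaining bookkeeping.
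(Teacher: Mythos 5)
Your upper-bound argument matches the paper's: the Fisher statistics on which $T_{\alpha}$ is built are scale invariant and analyzed conditionally on ${\bf X}$, so the fixed-design bound \eref{borne_majoration_test_fixed_variance_inconnue} is indeed the conditional core of the Gaussian-design bound \eref{borne_majoration_test}, and the exponential factor does come from the $\binom{p}{k}$-Bonferroni quantiles of $F$-statistics under unknown $\sigma^2$. The problem is your lower-bound transfer. You freeze a Gaussian realization ${\bf X}$ by demanding, via a union bound over the atoms of the prior $\mu_{\rho}$, that \emph{every} atom $\theta_0$ satisfies $\|{\bf X}\theta_0\|_n^2/n\asymp \|\theta_0\|_p^2$. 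But the prior must charge on the order of $\binom{p}{k}=\exp[Ck\log(p/k)]$ atoms for the claimed rate to come out at all (the diagonal term of the $\chi^2$ computation alone forces $\log|\mathrm{supp}(\mu_\rho)|\gtrsim n\log(1+\rho^2)\asymp k\log p$ when $\rho^2\asymp\exp[Ck\log(p)/n]$), while each $\chi^2_n$ concentration event you invoke fails with probability at least $e^{-Cn}$. Hence the union bound is vacuous exactly when $k\log(p/k)\gtrsim n$, i.e.\ in the ultra-high dimensional regime where \eref{borne_minoration_test_fixed_unknown} has content. You sense this obstruction, but the proposed remedy --- choosing $\mu$ so that only ``pairwise concentration of finitely many inner products'' is needed --- cannot work, since a prior with sub-exponentially many atoms cannot produce the exponential separation, and uniform control of the cross terms $\langle{\bf X}\theta_{m_1},{\bf X}\theta_{m_2}\rangle_n$ over all $\binom{p}{k}^2$ pairs is even further out of reach.

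The paper's transfer device requires no per-atom and no per-pair control, and it is the idea your proof is missing. Keep the same prior and bound the second moment of the likelihood ratio \emph{jointly} in $({\bf Y},{\bf X})$ --- this is precisely what the proof of Theorem \ref{thrm_minimax_testing} already establishes, namely $\mathbb{E}_{0_p,1}[L^2_{\mu_{\rho}}({\bf Y},{\bf X})]\leq 1+\eta_1^2$ for $\rho^2$ up to the claimed order --- and then apply Markov's inequality to the scalar random variable ${\bf X}\mapsto\mathbb{E}_{0_p,1}[L^2_{\mu_{\rho}}({\bf Y},{\bf X})\,|\,{\bf X}]$: with probability bounded away from zero over ${\bf X}$, the \emph{conditional} $\chi^2$-divergence is at most $1+\eta_2^2$ for a slightly larger constant $\eta_2$, and Lemma \ref{lemma_le_cam_method} then applies with ${\bf X}$ frozen. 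This is exactly the mechanism spelled out in the proof of Proposition \ref{prte_minoration_variance}, and it is why Lemma \ref{lemma_majoration_design_standard} is stated with the integration over ${\bf X}$ deliberately separated from the integration over ${\bf Y}$. The residual issue that, for the frozen design, some atoms may violate the fixed-design separation $\|{\bf X}\theta_0\|_n^2\geq n\rho^2\sigma^2$ is handled the same way: by Fubini the \emph{expected} $\mu_{\rho}$-mass of violating atoms is at most $e^{-Cn}$, so a second application of Markov's inequality lets one intersect the two events and restrict the prior to the separated atoms, at the cost of a small additive term in the total-variation bound --- an averaging argument, not a union bound. Without this averaging-plus-Markov step your construction produces no admissible design in the regime the proposition is actually about.
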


Again, we observe a phenomenon analogous to the random design case.

\subsection{Comparison between known and unknown variance}

There are three regimes depending on $(k,p,n)$. They are depicted on Figure \ref{figure01}:
\begin{enumerate}
 \item ${\bf k\boldsymbol{\log}(p)\leq \sqrt{n}}$. The minimax separation
distances are of the same order for known and unknown $\sigma^2$. The minimax
distance $k\log(p)/n$ is also of the same order as the minimax risk of
prediction.
\item  ${\bf \boldsymbol{\sqrt{n}}\leq k\boldsymbol{\log}(p)\leq n}$. If
$\sigma^2$ is known, the minimax separation distance is always of order
$1/\sqrt{n}$. In such a case, an optimal procedure amounts to test the
hypothesis $\{\mathbb{E}[\|{\bf Y}\|_n^2]=n\sigma^2\}$ against $\{\mathbb{E}[\|{\bf Y}\|_n^2]>  n\sigma^2\}$ using the
 statistic $\|{\bf Y}\|_n^2/\sigma^2$. If $\sigma^2$ is unknown, the statistic $\|{\bf Y}\|_n^2/\sigma^2$ is not available and the minimax separation distance behaves like 
$k\log(p)/n$.
\item ${\bf k\boldsymbol{\log}(p)\geq n}$. If $\sigma^2$ is unknown, the
minimax separation distance blows up. It is of order $(p/k)^{Ck/n}$. 
Consequently, the problem of testing $\{\theta_0=0_p\}$ becomes extremely difficult
in this setting.
\end{enumerate}

\section{Prediction}\label{section_prediction}

In contrast to the testing problem, the minimax risks of prediction (${\bf
P_2}$) exhibit really different behaviors in fixed and in random design. The big picture is summarized in Figure \ref{figure02}. We recall that the minimax risks $\mathcal{R}_F[k,{\bf X}]$, $\mathcal{R}_F[k]$, $\mathcal{R}_R[k,\Sigma]$, and $\mathcal{R}_R[k]$ are defined in Section \ref{section_main_prediction}.

\subsection{Gaussian design}

\begin{prte}{\bf [Minimax lower bound for
prediction]}\label{prte_prediction_minoration_minimax}
Assume that $p\geq C$. For any $1\leq k\leq  \lfloor p^{1/3}\rfloor$, we
have
\begin{eqnarray}\label{minoration_minimax_prediction}
\mathcal{R}_{R}[k,I_p]\geq  
C\frac{k}{n}\log\left(\frac{ep}{k}\right)
\exp\left\{C_2\frac{k}{n}\log\left(\frac{ep}{k}\right)\right\}\ . 
\end{eqnarray}
\end{prte}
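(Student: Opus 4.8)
The plan is to establish the minimax lower bound for prediction with Gaussian design $I_p$ by reducing it to a testing problem and then exploiting the exponential blow-up already captured in Theorem~\ref{thrm_minimax_testing}. The key observation is that the prediction loss $\|\sqrt{\Sigma}(\widehat\theta-\theta_0)\|_p^2/\sigma^2$ with $\Sigma=I_p$ is simply $\|\widehat\theta-\theta_0\|_p^2/\sigma^2$, so that a good predictor automatically yields a good estimator of $\theta_0$; conversely, any lower bound for recovering $\theta_0$ in $l_2$-loss transfers directly to the prediction risk. The subtlety, and the reason the exponential factor appears, is that the relevant distance here is the \emph{fixed} $l_2$ distance $\|\theta_0\|_p$, whereas the testing separation distance of Theorem~\ref{thrm_minimax_testing} is measured after renormalizing by the random scale of $\mathbf{X}\theta_0$. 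The ratio between these two scales is exactly where the exponential term $\exp[C_2(k/n)\log(ep/k)]$ enters.

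First I would recall the standard reduction from estimation to testing via Fano's or Le~Cam's method (as announced in the introduction). The idea is to construct a finite family of $k$-sparse vectors $\{\theta^{(1)},\dots,\theta^{(M)}\}\subset\Theta[k,p]$ that are well-separated in the $l_2$ norm $\|\cdot\|_p$, say pairwise separated by a distance of order $\rho$, but whose induced distributions $\mathbb{P}_{\theta^{(j)},\sigma}$ are statistically close (small Kullback--Leibler divergence). A Varshamov--Gilbert type argument produces a combinatorial family of cardinality $M\geq \exp[c\,k\log(p/k)]$ of supports, with the nonzero coefficients set to a common magnitude $a$. With $\Sigma=I_p$ the vectors have norm $\|\theta^{(j)}\|_p^2 = ka^2$, while the KL-divergence between two such hypotheses is governed by $\|\mathbf{X}(\theta^{(i)}-\theta^{(j)})\|_n^2/(2\sigma^2)$, which concentrates around $n\,\|\theta^{(i)}-\theta^{(j)}\|_p^2/(2\sigma^2)$ for the standard Gaussian design. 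The standard information-theoretic inequality then forces a lower bound on the minimax $l_2$-risk as soon as the separation $\rho$ is chosen so that the typical KL-divergence stays below a constant fraction of $\log M$.

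The heart of the matter, and where the blow-up is produced, is the interplay between the number of available hypotheses and the concentration of $\|\mathbf{X}\theta\|_n^2$. In the ultra-high dimensional regime $k\log(p/k)\gtrsim n$, the quantity $\varPhi_{k,+}(\mathbf{X})$ from Definition~\ref{definition_propriete_RIP} becomes large: for a standard Gaussian design with $k$ comparable to or exceeding $n$, the largest restricted eigenvalue of order $k$ grows like $\exp[C(k/n)\log(ep/k)]$, because one can align a sparse vector with the worst singular direction of an $n\times k$ random submatrix, and there are exponentially many submatrices to choose from. Consequently, to keep the predictive/KL scale $\|\mathbf{X}\theta\|_n^2$ bounded one must shrink $\|\theta\|_p^2$ by precisely this exponential factor, and the $l_2$-separation $\rho^2$ that the Fano argument delivers is therefore of order $(k/n)\log(ep/k)\exp[C_2(k/n)\log(ep/k)]$. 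I expect the main obstacle to be making this concentration-of-eigenvalues step rigorous and uniform: one needs a high-probability lower bound on $\varPhi_{k,-}$ and a matching control of $\varPhi_{k,+}$ over the chosen finite family, which is most cleanly done by transferring the testing lower bound~\eqref{borne_minoration_test} directly rather than re-deriving the geometry. Indeed, the cleanest route is to argue that any estimator with small prediction risk would furnish a test with small separation distance, contradicting Theorem~\ref{thrm_minimax_testing}; this packages the delicate random-matrix estimates inside the already-proven testing bound and reduces the proof to a short contradiction argument.
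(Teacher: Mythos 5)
Your proposal has two genuine gaps, one in each of the routes you sketch, and neither route as described can produce the exponential factor.

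The Fano route fails for a structural reason. In the Gaussian design model the design is integrated out of the Kullback--Leibler divergence: for $\Sigma=I_p$ one has exactly $\mathcal{K}(\mathbb{P}_{\theta_i,\sigma},\mathbb{P}_{\theta_j,\sigma})=n\|\theta_i-\theta_j\|_p^2/(2\sigma^2)$, so restricted eigenvalues of realizations of $\mathbf{X}$ never enter the computation, and with $\log M\asymp k\log(ep/k)$ hypotheses the Fano constraint caps the certifiable separation at $\rho^2/\sigma^2\asymp (k/n)\log(ep/k)$ --- precisely the non-exponential rate. Your random-matrix claim is also incorrect: for a standard Gaussian design $\varPhi_{k,+}(\mathbf{X})/n$ grows only polynomially in $k\log(ep/k)/n$; it is the \emph{smallest} restricted eigenvalue $\varPhi_{k,-}$ that decays like $\exp[-C(k/n)\log(ep/k)]$ (this is Eq.~(\ref{minoration_valeur_propre_restreinte_design_fixe}) of the paper). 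And even granting your claim, the logic runs backwards: shrinking $\|\theta\|_p^2$ to keep the KL scale bounded makes the $l_2$-separation, hence the lower bound, exponentially \emph{smaller}, not larger.

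The contradiction route fails because of a known/unknown-variance mismatch. Theorem \ref{thrm_minimax_testing} lower-bounds the separation only for tests whose level is controlled uniformly over $\sigma$; but $\mathcal{R}_R[k,I_p]$ is defined for a fixed $\sigma$, and the proposition must (and in the paper does) hold even for estimators that know $\sigma$, so the test you would build from a good estimator --- e.g.\ reject when $\|\widehat\theta\|_p\geq \rho\sigma/2$ --- uses $\sigma$ and is simply not covered by that theorem. For known variance, the null-versus-signal separation is capped at $1/\sqrt{n}$ (Theorem \ref{thrm_minoration_minimax_variance_conditionnelle}), with no blow-up, so no contradiction argument based on testing $\{\theta_0=0_p\}$ with known $\sigma$ can ever yield the exponential term. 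The paper's actual argument fixes exactly this point: it splits the coordinates into two halves $M_1$, $M_2$ and tests two composite alternatives against each other (signal supported in $M_1$ versus in $M_2$, both with the same norm and the same $\sigma^2=(1+\rho^2)^{-1}$, so the reduction remains valid for known-$\sigma$ estimators); the null $\mathbb{P}_{0_p,1}$, whose variance is inflated so that $\sigma^2+\|\theta_0\|_p^2=1$, enters only as an intermediate point in a total-variation triangle inequality, and the chi-square computation from the proof of Theorem \ref{thrm_minimax_testing} (hypergeometric overlap of random supports) then shows both mixtures are indistinguishable from it up to exponentially large $\rho$. Your instinct to recycle Theorem \ref{thrm_minimax_testing} is sound --- the paper does reuse its second-moment bound --- but the reduction has to go through this two-alternative, variance-matched construction, not through the unknown-variance null test.
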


\begin{remark}{\bf [General covariances
$\Sigma$]}\label{remarque_covariance_generale}
The lower bound (\ref{minoration_minimax_prediction}) is only stated
for the identity covariance $\Sigma=I_p$. For general covariance matrices
$\Sigma$, we have
\begin{eqnarray}\label{eq_minoration_random_design_restricted}
\mathcal{R}_{R}[k,\Sigma]\geq
C\frac{\varPhi_{2k,-}(\sqrt{\Sigma})}{\varPhi_{2k,+}(\sqrt{\Sigma})}
\times \frac{k}{n}\log\left(\frac{ep}{k}\right)\ ,
\end{eqnarray}
for any $k\leq n\leq p/2$. 
This statement has been proved in~\cite{verzelen_regression}
(Proposition 4.5) in the special case of restricted isometry, but the proof
straightforwardly extends to restricted eigenvalue conditions. For $\Sigma=I_p$, the
lower bound (\ref{eq_minoration_random_design_restricted}) does not capture the
elbow effect in an ultra-high dimensional setting (compare with
(\ref{minoration_minimax_prediction})). 
\end{remark}

\begin{thrm}{\bf [Minimax upper bound]}
\label{thrm_prediction_majoration_minimax}
Assume that $n\geq C$. There exists an estimator $\widetilde{\theta}^V$ (defined in Section \ref{section_theta_V}) such that the following holds:
\begin{enumerate}
 \item The computation of $\widetilde{\theta}^V$ does not require the knowledge of $\sigma^2$ or $k$.
\item For any covariance
$\Sigma$, any $\sigma>0$, any $1\leq k\leq \lfloor (n-1)/4\rfloor$, and any
$\theta_0\in\Theta[k,p]$ we
have
\begin{eqnarray}
 \mathbb{E}_{\theta_0,\sigma}\left[\|\sqrt{\Sigma}(\widetilde{\theta}
^V-\theta_0)\|_p^2\right ]
\leq C_1 \frac{k}{n}\log\left(\frac{ep}{k}\right)
\exp\left\{C_2\frac{k}{n}\log\left(\frac{ep}{k}\right)\right\}\sigma^2\ . 
\label{inegalite_oracle}
\end{eqnarray}	
\end{enumerate}

\end{thrm}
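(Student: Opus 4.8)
The plan is to take $\widetilde{\theta}^V$ to be a penalized least-squares (model-selection) estimator over the collection of all models $m\subset\{1,\dots,p\}$ of size at most $\lfloor(n-1)/4\rfloor$: for each $m$ one computes the restricted least-squares fit $\widehat{\theta}_m$ supported on $m$, and $\widehat m$ minimizes a criterion that is \emph{multiplicative} in the residual sum of squares (so that it is free of $\sigma^2$) with a complexity penalty of order $|m|\log(ep/|m|)$ (so that it is free of $k$ and adapts to the sparsity). The conceptual heart is a reduction of the integrated loss to the empirical loss. Writing $u=\widetilde{\theta}^V-\theta_0$ and $S=\widehat m\cup\supp(\theta_0)$, the vector $u$ is supported on $S$ with $|S|\le|\widehat m|+k$. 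Whitening the columns of $\mathbf X_S$ as $\mathbf X_S=\mathbf Z_S\,\Sigma_{S,S}^{1/2}$ with $\mathbf Z_S$ a standard $n\times|S|$ Gaussian matrix, and setting $v=\Sigma_{S,S}^{1/2}u_S$, one gets $\|\sqrt{\Sigma}u\|_p^2=\|v\|^2$ and $\|\mathbf X u\|_n^2=\|\mathbf Z_S v\|_n^2$, hence
\[
\|\sqrt{\Sigma}(\widetilde{\theta}^V-\theta_0)\|_p^2\ \le\ \frac{1}{\varphi_{\min}\!\big(\mathbf Z_S^T\mathbf Z_S/n\big)}\cdot\frac{\|\mathbf X(\widetilde{\theta}^V-\theta_0)\|_n^2}{n}\ .
\]
The decisive gain is that the conversion factor depends only on a \emph{standard} Gaussian matrix and is completely free of $\Sigma$, so the resulting bound will hold uniformly over all covariances, as required.

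Second, I would control the empirical factor. The selection criterion is exactly of the type analysed for model selection under unknown variance (the complexity-penalized criteria of \cite{BGH09,BM01}); invoking this machinery yields an oracle inequality $\mathbb E_{\theta_0,\sigma}[\|\mathbf X(\widetilde{\theta}^V-\theta_0)\|_n^2/n]\le C\sigma^2 k\log(ep/k)/n$, together with the associated exponential deviation bound, which in particular controls the second moment $\mathbb E[(\|\mathbf X u\|_n^2/n)^2]$ by $C(\sigma^2 k\log(ep/k)/n)^2$. This is the ``easy'' high-dimensional rate and carries no blow-up: on the event that $\widehat m$ has size of order $k$ and is well conditioned, the population and empirical losses are comparable and one recovers the non-ultra rate $\sigma^2 k\log(ep/k)/n$.

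Third — and this is where the exponential factor is produced — I must control the conversion factor in expectation. Splitting the loss according to $\widehat m$ and decoupling by Cauchy--Schwarz, it suffices to bound
\[
\mathbb E\Big[\max_{m:\,|m|\le d}\ \varphi_{\min}\big(\mathbf Z_{m\cup\supp(\theta_0)}^T\mathbf Z_{m\cup\supp(\theta_0)}/n\big)^{-2}\Big]
\]
for $d$ of order $k$. For a fixed support of size $s<n$ the matrix $\mathbf Z_S^T\mathbf Z_S/n$ is a standard Wishart whose smallest eigenvalue obeys a small-ball estimate $\mathbb P(\varphi_{\min}\le t)\le (Ct)^{(n-s)/2}$ for small $t$; a union bound over the $\binom{p}{s}\le\exp\{s\log(ep/s)\}$ supports followed by integration of this tail yields a bound of order $\big[\binom{p}{s}\big]^{c/(n-s)}=\exp\{c\,s\log(ep/s)/(n-s)\}$. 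The hypothesis $k\le\lfloor(n-1)/4\rfloor$ forces $s\le n/2$, which keeps the negative moments finite and turns the exponent into $C_2\,k\log(ep/k)/n$, matching the statement. Combining the square root of this with the second-moment bound of the previous step gives
\[
\mathbb E_{\theta_0,\sigma}\big[\|\sqrt{\Sigma}(\widetilde{\theta}^V-\theta_0)\|_p^2\big]\ \le\ C_1\,\frac{k}{n}\log\!\Big(\frac{ep}{k}\Big)\exp\!\Big\{C_2\,\frac{k}{n}\log\!\Big(\frac{ep}{k}\Big)\Big\}\,\sigma^2\ .
\]

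The hard part will be the third step, specifically the interplay between the data-dependent $\widehat m$ and the design: the conversion factor and the empirical loss are not independent, and the size of the selected model is itself random. I would handle this either by the crude Cauchy--Schwarz decoupling sketched above (paying only a harmless factor in the constants) or, more carefully, by summing over $m$ the product of the selection probability $\mathbb P(\widehat m=m)$ — which the penalty forces to decay fast in $|m|$ — against the conditioning cost on that model, checking that the contribution of the rare large models does not dominate the $\exp\{C_2 k\log(ep/k)/n\}$ term. Establishing the Wishart small-ball estimate uniformly over the exponentially many submodels, and verifying that $s<n$ suffices for the relevant negative moments to exist, is the technical crux on which the sharpness of the exponent rests.
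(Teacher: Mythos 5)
Your overall architecture coincides with the paper's: the estimator you describe is exactly the paper's $\widetilde{\theta}^V$ (Section \ref{section_theta_V}: least-squares over $\Theta[k,p]$ combined with the variance-free criterion $\log\|{\bf Y}-{\bf X}\widehat{\theta}_k\|_n^2+K\frac{k}{n}\log(ep/k)$), the whitening reduction $\|\sqrt{\Sigma}u\|_p^2\leq \varphi_{\min}(\mathbf{Z}_S^T\mathbf{Z}_S/n)^{-1}\|\mathbf{X}u\|_n^2/n$ is the standard device for Gaussian designs, and the paper itself says the proof rests on a new deviation inequality for the smallest eigenvalue of Wishart matrices (Lemma \ref{lemma_concentration_vp_wishart}), which is precisely your small-ball estimate. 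So the route is the paper's route. However, two of your intermediate claims are wrong as stated, and one of them is a genuine gap.

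First, your step 2 asserts that the in-sample loss satisfies $\mathbb{E}_{\theta_0,\sigma}[\|\mathbf{X}(\widetilde{\theta}^V-\theta_0)\|_n^2/n]\leq C\sigma^2 k\log(ep/k)/n$ ``with no blow-up''. This contradicts the paper's own results: the machinery of \cite{BGH09} for this variance-free criterion only yields the bound of Proposition \ref{prte_risque_baraudgiraud}, which carries the factor $\exp[C_2\frac{k}{n}\log(ep/k)]$, and Proposition \ref{prte_adaptation_impossible} shows this factor is unavoidable for any estimator that adapts to sparsity under unknown variance (the remark following it states that $\widetilde{\theta}^V$ exhibits exactly this behavior). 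This error happens to be harmless for the final statement, since the product of two exponential factors of the form $\exp[C\frac{k}{n}\log(ep/k)]$ is again of that form, but the reasoning must be corrected.

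Second, and more seriously: in step 3 you reduce the control of the conversion factor to $\mathbb{E}\bigl[\max_{|m|\leq d}\varphi_{\min}(\mathbf{Z}_{m\cup\mathrm{supp}(\theta_0)}^T\mathbf{Z}_{m\cup\mathrm{supp}(\theta_0)}/n)^{-2}\bigr]$ ``for $d$ of order $k$''. Nothing forces the selected model to have size $O(k)$: the selected dimension $\widehat{k}^V$ ranges over $\{1,\ldots,\lfloor(n-1)/4\rfloor\}$, so $S=\widehat{m}\cup\mathrm{supp}(\theta_0)$ can have cardinality of order $n$. If one honestly takes the maximum over all selectable models, the union-bound threshold becomes $\exp[C\max_{s\leq n/2}\, s\log(ep/s)/(n-s)]$, which at $s\asymp n/2$ is of order $(p/n)^{C}$ --- polynomial in $p$, and therefore far larger than $\exp[C_2\frac{k}{n}\log(ep/k)]$ when $k$ is small. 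So the ``crude Cauchy--Schwarz decoupling'' you lean on fails. The only viable route is the second one you mention in passing: split the expectation according to the value of $\widehat{m}$, and show that the penalty forces the selection probabilities $\mathbb{P}(|\widehat{m}|=j)$ to decay like $\exp[-cj\log(ep/j)]$ for $j\gg k$, fast enough to absorb the $\binom{p}{j}$ factors coming from the union bound and the Wishart negative moments on large models. That analysis --- which is the real content of the appendix proof, and the reason the tuning constant $K$ must exceed a numerical threshold --- is left entirely unexecuted in your proposal, so as written it does not close.
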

In contrast to similar results such as Theorem 1 in Giraud~\cite{giraud08}
or Theorem 3.4 in Verzelen~\cite{verzelen_regression}, we do not restrict
 $k$ to be smaller than $n/(2\log p)$, that is we encompass both high-dimensional and ultra-high dimensional setting. The proof of the
theorem is based on a new deviation inequality for the spectrum of
Wishart matrices stated in Lemma \ref{lemma_concentration_vp_wishart}.

\begin{remark} 
 {\bf [Minimax risk]}
We derive from Theorem
\ref{thrm_prediction_majoration_minimax}  and Proposition
\ref{prte_prediction_minoration_minimax} that the minimax
risk $\mathcal{R}_{R}[k]$ is of order
$$  C_1\frac{k}{n}\log\left(\frac{ep}{k}\right)
\exp\left\{C_2\frac{k}{n}\log\left(\frac{ep}{k}\right)\right\}
\ .$$
If $k\log(p/k)$ is small compared to $n$, the minimax risk of estimation is
of order $Ck\log(p/k)/n$.  In an ultra-high dimensional setting, we again
observe a blow up.
\end{remark}

\begin{remark} {\bf [Adaptation to sparsity and the variance]} The estimator $\widetilde{\theta}^V$ does not
requires the knowledge of $k$ and 
of the variance $\sigma^2=\var(Y|X)$. It follows that $\widetilde{\theta}^V$ is minimax
adaptive to all $1\leq k\leq p^{1/3}\wedge [(n-1)/4]$ and to all $\sigma^2>0$. As a consequence,
adaptation to the sparsity and to the variance is possible for this problem.
\end{remark}

\begin{remark} {\bf [Dependent design]} The risk upper bound of
$\widetilde{\theta}^V$
stated in Theorem \ref{thrm_prediction_majoration_minimax}  is valid for any
covariance matrix $\Sigma$ of the covariance $X$. In contrast, the minimax lower
bound of Theorem \ref{thrm_minimax_testing} is restricted to the identity
covariance. This implies that the minimax prediction risk for a general
matrix $\Sigma$ is at worst of the same order as in the independent case: there
exists a universal constant $C>0$ such that for all covariance $\Sigma$,
$$\mathcal{R}_{R}[k,I_p]\geq C\mathcal{R}_{R}[k]\ .$$

 In Remark \ref{remarque_covariance_generale}, we have stated a
minimax lower bound for prediction that depends on the restricted eigenvalues
of $\Sigma$. Fix some $0<\gamma<1$. If we consider some covariance matrices
$\Sigma$ such that
$\varPhi_{2k,-}(\sqrt{\Sigma})/\varPhi_{2k,+}(\sqrt{\Sigma})\geq 1-\gamma$ , the
minimax lower bound (\ref{eq_minoration_random_design_restricted}) and the upper
bound (\ref{inegalite_oracle}) match up to a constant $C(\gamma)$.
In general, the lower bound (\ref{eq_minoration_random_design_restricted}) and the upper bound (\ref{inegalite_oracle}) do not exhibit the
same dependency with respect to $\Sigma$, especially when
$\varPhi_{2k,-}(\sqrt{\Sigma})/\varPhi_{2k,+}(\sqrt{\Sigma})$ is close to zero.
\end{remark}

\subsection{Fixed design}

\subsubsection{Known variance}

The minimax prediction risk with known variance has been studied in Raskutti et al.~\cite{raskwain09} and Rigollet and Tsybakov~\cite{tsyrig10} (see also~\cite{abramovich10,zhangminimax}). For any design ${\bf X}$ and any $1\leq k\leq n$, these authors have proved that the minimax risk
$\mathcal{R}_F[k,{\bf X}]$ satisfies
\begin{eqnarray}\label{eq_minoration_fixed_design_restricted}
C_1
\inf_{s\leq k} \frac{\varPhi_{2s,-}({\bf X})}{\varPhi_{2s,+}({\bf X})}
\frac{s}{n}\log\left(\frac{ep}{s}\right)\leq  \mathcal{R}_F[k,{\bf X}]  \leq C_2
\frac{k}{n}\log\left(\frac{ep}{k}\right)
\ .
\end{eqnarray}
Next, we bound the  supremum $\sup_{{\bf X}}R_F[k,{\bf X}]$ and we study the possibility of adaptation to the sparsity.

\begin{prte}\label{prte_minimax_prediction_fixed_design} 
For any $1\leq k\leq n$, the supremum $\sup_{{\bf X}}\mathcal{R}_F[k,{\bf X}]$ is lower bounded as follows
\begin{equation}\label{eq_minoration_fixed_design}
 \mathcal{R}_F[k]\geq C
\left[\frac{k}{n}\log\left(\frac{ep}{k}\right)\wedge 1
\right]\ .
\end{equation}
Assume that $p\geq n$. There exists an estimator $\tilde{\theta}^{BM}$ (defined in Section \ref{section_theta_BM}) which satisfies
\begin{equation}\label{eq_adaptation}
\sup_{{\bf X}}\sup_{\theta_0\in\Theta[k,p]}\mathbb{E}_{\theta_0,\sigma}\left[\|{\bf X}(\widehat{\theta}^{BM}-\theta_0)\|_n^2\right]/(n\sigma^2)\leq C
\left[\frac{k}{n}\log\left(\frac{ep}{k}\right)\wedge 1
\right]\ ,
\end{equation}
for any $1\leq k\leq n$.
\end{prte}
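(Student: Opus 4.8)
The plan is to establish the lower bound by exhibiting a well-balanced (near-orthogonal) design together with a Fano-type packing argument, and to establish the matching upper bound through a complexity-penalized least-squares estimator of Birg\'e--Massart type~\cite{BM01}. The key structural feature exploited throughout is that, in fixed design, the prediction loss $\|{\bf X}(\theta_1-\theta_2)\|_n^2/(n\sigma^2)$ and the Kullback--Leibler divergence between $\mathcal{N}({\bf X}\theta_1,\sigma^2I_n)$ and $\mathcal{N}({\bf X}\theta_2,\sigma^2 I_n)$ coincide up to the factor $n/2$; hence the information-theoretic cost of separating two hypotheses is directly tied to the loss we wish to bound from below.

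For the lower bound (\ref{eq_minoration_fixed_design}), I would distinguish two regimes. When $\frac kn\log(ep/k)\leq 1$, fix a well-balanced design ${\bf X}$ with columns normalized to $\sqrt n$ and restricted eigenvalues $\varPhi_{2k,\pm}({\bf X})\asymp n$; such a design exists (for instance a realization of a standard Gaussian matrix, with high probability) precisely because $k\log(p/k)\lesssim n$ in this regime. Using a Varshamov--Gilbert packing of supports, build $M\asymp\exp\{c\,k\log(ep/k)\}$ vectors $\theta_i=\mu\,\1_{S_i}\in\Theta[k,p]$ whose supports are pairwise $\asymp k$-separated, with magnitude $\mu^2\asymp\sigma^2\log(ep/k)/n$. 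Near-orthogonality then gives pairwise losses of order $k\log(ep/k)/n$ and pairwise KL divergences of order $k\log(ep/k)\asymp\log M$, so Fano's inequality yields $\mathcal{R}_F[k,{\bf X}]\gtrsim \frac kn\log(ep/k)$. When $\frac kn\log(ep/k)>1$, I would instead reduce to a smaller sparsity: choose $s\leq k$ with $s\log(ep/s)\asymp n$ (possible since $s\mapsto s\log(ep/s)$ is increasing and crosses $n$ below $k$), run the same construction at sparsity level $s$ on a design that is well-balanced up to order $2s$, and use the monotonicity $\mathcal{R}_F[k,{\bf X}]\geq\mathcal{R}_F[s,{\bf X}]$ coming from $\Theta[s,p]\subset\Theta[k,p]$. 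This produces a constant lower bound, matching $\frac kn\log(ep/k)\wedge 1$. The degenerate case where even $s=1$ already forces $\log(ep)/n>1$ is handled by a single-coordinate packing with the magnitude capped so that the loss is of constant order while the KL stays below $\log p\geq n$. Taking the supremum over ${\bf X}$ gives (\ref{eq_minoration_fixed_design}).

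For the upper bound (\ref{eq_adaptation}), I would define $\widehat\theta^{BM}$ as the penalized least-squares estimator over the collection of models $m\subset\{1,\dots,p\}$, where for each $m$ the estimator $\widehat\theta_m$ is the least-squares projection $\Pi_m$ of ${\bf Y}$ onto the column space of ${\bf X}_m$, and $\widehat m$ minimizes $\|{\bf Y}-{\bf X}\widehat\theta_m\|_n^2+\pen(m)$. The penalty is taken of order $\pen(m)\asymp\sigma^2|m|\{1+\log(ep/|m|)\}$, which exactly balances the $\log\binom{p}{|m|}$ entropy of the models of a given size, so that the Birg\'e--Massart weight-summability condition holds. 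Crucially, I would also include the single distinguished ``full'' model corresponding to the projection onto the whole column space of ${\bf X}$ (of rank $\leq n$), penalized merely by $\asymp\sigma^2 n$ with no logarithmic factor, since this model carries no combinatorial entropy. The Birg\'e--Massart oracle inequality~\cite{BM01} then gives, for every $\theta_0\in\Theta[k,p]$, $\mathbb{E}\|{\bf X}(\widehat\theta^{BM}-\theta_0)\|_n^2\lesssim\inf_m\{\|{\bf X}\theta_0-\Pi_m{\bf X}\theta_0\|_n^2+\pen(m)\}+\sigma^2$; evaluating the infimum at $m=\supp(\theta_0)$ (zero bias, penalty $\asymp\sigma^2 k\log(ep/k)$) and at the full model (zero bias, penalty $\asymp\sigma^2 n$) and dividing by $n\sigma^2$ yields exactly $\frac kn\log(ep/k)\wedge 1$, uniformly over all ${\bf X}$ and all $\theta_0$. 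Since $\widehat m$ is selected by minimizing the criterion, the procedure does not need to know $k$, which gives the asserted adaptation.

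The main obstacle is the constant (order-one) lower bound in the ultra-high dimensional regime: a naive two-point argument fails because loss and KL divergence are proportional in fixed design, so no two hypotheses can be simultaneously far in loss and close in distribution. The sparsity-reduction device---passing to the largest sub-level $s$ for which a genuinely well-balanced design still exists and applying Fano there---is what converts the logarithmic packing entropy into a constant lower bound, and verifying that such an $s\leq k$ always exists together with the corresponding near-orthogonal design is the delicate point. On the upper-bound side, the only nontrivial ingredient beyond citing~\cite{BM01} is the inclusion of the penalty-$n$ full-projection model, without which the oracle bound would not be capped at $1$.
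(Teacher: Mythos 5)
Your proposal is correct and follows essentially the same route as the paper: the lower bound is obtained there by evaluating the restricted-eigenvalue bound (\ref{eq_minoration_fixed_design_restricted}) (itself a Fano/packing result, which you simply re-derive from scratch) on a realization of a standard Gaussian design, reducing to a sparsity level $s\leq k$ with $s\log(ep/s)\asymp n$ in the ultra-high dimensional regime, exactly as you do. For the upper bound, the paper's estimator $\widetilde{\theta}^{BM}$ is precisely your penalized least squares with penalty $4k[4+\log(p/k)]$ for $k\leq k^*$ and the full model penalized by $2n$ with no logarithmic factor, and (\ref{eq_adaptation}) is deduced, as in your argument, as a special case of the Birg\'e--Massart oracle inequality.
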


This upper bound (\ref{eq_adaptation})  is a consequence of Birg\'e and Massart~\cite{BM01}.

\begin{remark}
If $k\log(p/k)$ is small compared to $n$, the minimax risk  is of order
$Ck\log(p/k)/n$.  In an ultra-high dimensional setting, this minimax risk
remains close to one. This corresponds (up to renormalization) to the minimax
risk of estimation of the vector $\mathbb{E}[{\bf Y}]$ of size $n$ . As a
consequence, the sparsity assumption does not play anymore a role in a
ultra-high dimensional setting. From (\ref{eq_adaptation}), we derive that adaptation to the sparsity is possible  when the variance $\sigma^2$ is known.
\end{remark}

\begin{remark}
 {\bf [Dependency of $\mathcal{R}_F[k,{\bf X}]$ on ${\bf X}$]} For designs ${\bf X}$, such that 
the ratio $\varPhi_{2k,-}({\bf X})/\varPhi_{2k,+}({\bf X})$ is close to one,
the lower bounds and upper bounds of (\ref{eq_minoration_fixed_design_restricted})  agree with each other. 
This is for instance the case of the realizations (with high probability) of a Gaussian standard independent design (see the proof of Proposition \ref{prte_minimax_prediction_fixed_design} for more details).

However,  the dependency of the minimax lower bound in (\ref{eq_minoration_fixed_design_restricted})
on ${\bf X}$ is not sharp when the ratio $\varPhi_{2k,-}({\bf
X})/\varPhi_{2k,+}({\bf X})$ is away from one. Take for instance an
orthogonal design with
$p=n$
and duplicate the last column. Then, the lower bound
(\ref{eq_minoration_fixed_design_restricted}) for this new design ${\bf X}$ is
$0$ while the minimax risk is of order $k\log(p/k)/n$. 

Similarly, the dependency of the minimax upper bound in (\ref{eq_minoration_fixed_design_restricted}) on ${\bf X}$ is not sharp. For very specific design, it is possible to obtain a minimax risk $\mathcal{R}_F[k,{\bf X}]$ that is much  smaller than $k/n\log(p/k)\wedge 1$ (see Abramovich and Grinshtein~\cite{abramovich10}).
\end{remark}

\begin{remark}{\bf [Comparison with $l_1$ procedures]}  The designs ${\bf X}$
for which $l_1$  procedures such as the Lasso or the Dantzig selector are proved
to perform well require that $\varPhi_{2k,-}({\bf
X})/\varPhi_{2k,+}({\bf X})$ is close to one. It is interesting to notice that
these designs ${\bf X}$ precisely correspond to situations where the minimax
risk is close to its maximum $k\log(p/k)/n$ (see Equation
(\ref{eq_minoration_fixed_design_restricted})). We refer to \cite{raskwain09} for a more complete discussion.\end{remark}

\begin{remark}\label{remarque_matrices}
We easily  retrieve from (\ref{eq_minoration_fixed_design_restricted}) a 
result of asymptotic geometry first observed by Baraniuk et
al.~\cite{baraniuk08} in the special of restricted isometry
property~\cite{candes05}. For any $0<\delta\leq 1$, there exists a constant
$C(\delta)>0$ such that no
$n\times p$ matrix ${\bf X}$ can fulfill  $\varPhi_{k,-}({\bf
X})/\varPhi_{k,+}({\bf X})\geq \delta$ if $k(1+\log(p/k))\geq
C(\delta)n$.
\end{remark}

\begin{proof}
If $\varPhi_{2k,-}({\bf
X})/\varPhi_{2k,+}({\bf X})\geq \delta$, then 
$ \mathcal{R}_{F}[k,{\bf X}] \geq
C\delta k\log\left(ep/k\right)/n$.~\\
We also have
$ \mathcal{R}_{F}[k,{\bf X}]\leq  \mathcal{R}_{F}[p,{\bf X}]\leq 1$. The last inequality follows from the risk of an estimator $\widehat{\theta}_n\in\arg\min_{\theta\in\mathbb{R}^p}\|{\bf Y}-{\bf X}\theta\|_n^2$.
Gathering these two bounds allows to conclude.
\end{proof}

\subsubsection{Unknown variance}
We now consider the problem of prediction when the variance $\sigma^2$ is
unknown.

\begin{prte}\label{prte_majoration_prediction_fixed_unknown}
 For any $1\leq k\leq n$, there exists an estimator $\widehat{\theta}^{(k)}$ that does not require the knowledge of $\sigma^2$ such that
\begin{equation}\label{eq_majoration_prediction_fixed_unknown}
 \sup_{{\bf X}}\sup_{\sigma>0}\sup_{\theta_0\in\Theta[k,p]}\mathbb{E}_{\theta_0,\sigma}\left[\|{\bf X}(\widehat{\theta}^{(k)}-\theta_0)\|_n^2\right]/(n\sigma^2)\leq C
\left[\frac{k}{n}\log\left(\frac{ep}{k}\right)\wedge 1
\right]\ .
\end{equation}
\end{prte}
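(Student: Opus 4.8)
The plan is to exploit the fact that here the sparsity $k$ is \emph{known} --- only the variance $\sigma^2$ is unknown --- so that the estimator $\widehat{\theta}^{(k)}$ is allowed to depend on $k$, and the analysis can be split into two regimes according to the (computable) comparison between $k\log(ep/k)$ and $n$. Fix a small threshold $\rho_0\in(0,1)$ to be specified later. If $\frac{k}{n}\log(ep/k)\geq \rho_0$ we are in the ultra-high dimensional regime, where the target bound $\frac{k}{n}\log(ep/k)\wedge 1$ is of order one; if $\frac{k}{n}\log(ep/k)<\rho_0$ we are in the (non-ultra) high-dimensional regime, where it equals $\frac{k}{n}\log(ep/k)$. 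In both cases $\widehat{\theta}^{(k)}$ will be a least-squares quantity whose definition does not involve $\sigma^2$.

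In the ultra-high dimensional regime I would simply take $\widehat{\theta}^{(k)}=\widehat{\theta}_n\in\arg\min_{\theta\in\mathbb{R}^p}\|{\bf Y}-{\bf X}\theta\|_n^2$, exactly as in the proof of Remark~\ref{remarque_matrices}. Then ${\bf X}\widehat{\theta}_n$ is the orthogonal projection of ${\bf Y}$ onto the column space of ${\bf X}$; since ${\bf X}\theta_0$ already lies in that space, ${\bf X}(\widehat{\theta}_n-\theta_0)$ is the projection of $\eps$ onto a subspace of dimension $\mathrm{rank}({\bf X})\leq n$. Hence $\mathbb{E}_{\theta_0,\sigma}[\|{\bf X}(\widehat{\theta}_n-\theta_0)\|_n^2]/(n\sigma^2)=\mathrm{rank}({\bf X})/n\leq 1$, which is bounded by $\rho_0^{-1}[\frac{k}{n}\log(ep/k)\wedge 1]$ in this regime. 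This piece requires no knowledge of $\sigma^2$.

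In the high-dimensional regime the natural estimator is a penalized least-squares procedure for unknown variance, namely the Gaussian model selection device of Baraud, Giraud and Huet~\cite{BGH09}. I would consider the collection of models $\{{\bf X}_m:\ m\in\mathcal{M}(k,p)\}$ indexed by the $k$-subsets of $\{1,\ldots,p\}$, attach to each the least-squares estimator $\widehat{\theta}_m$ and weights $L_m$ of order $\log(ep/k)$ chosen so that $\sum_{m\in\mathcal{M}(k,p)}e^{-L_m|m|}\leq 1$ (possible since $\log\binom{p}{k}\leq k\log(ep/k)$), and let $\widehat{\theta}^{(k)}=\widehat{\theta}_{\widehat{m}}$ be the output of their selection criterion, which depends on the residual sums of squares only and not on $\sigma^2$. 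Writing $\Pi_m$ for the orthogonal projector onto the range of ${\bf X}_m$, their oracle inequality then yields
\begin{equation*}
\mathbb{E}_{\theta_0,\sigma}\!\left[\|{\bf X}(\widehat{\theta}^{(k)}-\theta_0)\|_n^2\right]/\sigma^2\leq C\inf_{m\in\mathcal{M}(k,p)}\left\{\frac{\|{\bf X}\theta_0-\Pi_m{\bf X}\theta_0\|_n^2}{\sigma^2}+|m|+L_m|m|\right\}.
\end{equation*}
Choosing any $m\supseteq\supp(\theta_0)$ of size $k$ kills the bias term, because ${\bf X}\theta_0$ then lies in the span of ${\bf X}_m$, leaving the complexity term of order $k+k\log(ep/k)$; dividing by $n$ gives a risk of at most $C\frac{k}{n}\log(ep/k)=C[\frac{k}{n}\log(ep/k)\wedge 1]$ in this regime.

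The main technical point --- and the only place where the regime split is genuinely needed --- is to verify the admissibility condition of~\cite{BGH09}, which requires the model dimensions together with their weights to stay below a fixed fraction $\rho n$ of the sample size. For a model of size $k$ the relevant quantity is of order $k+k\log(ep/k)\asymp k\log(ep/k)$ (using $\log(ep/k)\geq 1$ under $p\geq n\geq k$), and the constraint $\frac{k}{n}\log(ep/k)<\rho_0$ guarantees $k\log(ep/k)<\rho_0 n$; choosing $\rho_0$ small enough, depending only on the absolute constant $\rho$ from~\cite{BGH09}, makes the oracle inequality applicable. The remaining verifications --- that $\log\binom{p}{k}\leq k\log(ep/k)$ so the weights can indeed be taken of order $\log(ep/k)$, and that the two regimes glue together with an absolute constant --- are routine. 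Combining the two cases yields the announced bound with a universal constant $C$.
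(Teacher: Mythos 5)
Your proof is correct and takes essentially the same route as the paper: the paper likewise splits on whether $k\log(ep/k)/n$ exceeds a constant, using a penalized least-squares procedure based on Baraud--Giraud--Huet~\cite{BGH09} (this is how the bound (\ref{inegalite_oracle_baraud}) for $\widetilde{\theta}^V$ is obtained, and its exponential factor is bounded precisely in the non-ultra regime) when the ratio is small, and the full least-squares estimator $\widehat{\theta}_n$ with risk $\mathrm{rank}({\bf X})/n\leq 1$ (bound (\ref{inegalite_oracle_plein})) when it is large. Your only deviation --- applying \cite{BGH09} to the fixed collection $\mathcal{M}(k,p)$ rather than letting the dimension be selected as in $\widetilde{\theta}^V$ --- is immaterial here since $k$ is known.
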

Thus, the optimal risk of prediction over $\Theta[k,p]$ remains of the same order
for known and unknown $\sigma^2$.

Let us now study to what extent
adaptation to the sparsity is possible when the variance
$\sigma^2$ is unknown.  In order to get some ideas let us provide risk bounds for two procedures that do not require the knowledge of $\sigma$: the estimator $\widetilde{\theta}^V$ already studied for Gaussian design (defined in Section \ref{section_theta_V})
 and the estimator $\widehat{\theta}_n$ defined by  $\widehat{\theta}_n\in\arg\min_{\theta\in\mathbb{R}^p}\|{\bf Y}-{\bf X}\theta\|_n^2$.

\begin{prte}{\bf [Risk bound for $\widetilde{\theta}^{V}$ and $\widehat{\theta}_n$]}
 \label{prte_risque_baraudgiraud}
Assume that $n\geq 14$.
For any $1\leq k\leq \lfloor(n-1)/4\rfloor$, the maximal risk of $\widehat{\theta}^V$ over $\Theta[k,p]$ is upper bounded as follows 
\begin{eqnarray}\label{inegalite_oracle_baraud}
\sup_{{\bf X}}\sup_{\sigma>0} \sup_{\theta_0\in\Theta[k,p]}\mathbb{E}_{\theta_0,\sigma}\left[\|{\bf
X}(\tilde{\theta}^{V}-\theta_0)\|_n^2\right]/(n\sigma^2)\leq C_1
\frac{k}{n}\log\left(\frac{ep}{k}\right)\exp\left[C_2\frac{k}{n}\log\left(\frac{ep
} { k }
\right)\right]\sigma^2\ . 
\end{eqnarray}
For any $1\leq k\leq n$, the maximal risk of $\widehat{\theta}_n$ over $\Theta[k,p]$ is upper bounded as follows 
\begin{eqnarray}\label{inegalite_oracle_plein}
 \sup_{{\bf X}}\sup_{\sigma>0}\sup_{\theta_0\in\Theta[k,p]}\mathbb{E}_{\theta_0,\sigma}\left[\|{\bf
X}(\widehat{\theta}_n-\theta_0)\|_n^2\right]/(n\sigma^2)\leq 1\ . 
\end{eqnarray}
\end{prte}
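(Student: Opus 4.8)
The plan is to handle the two estimators separately, since the bound for $\widehat{\theta}_n$ is elementary and deterministic in nature, whereas the bound for $\widetilde{\theta}^V$ rests entirely on an oracle inequality already available from its construction.

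For the least-squares estimator $\widehat{\theta}_n$ (bound \eqref{inegalite_oracle_plein}), I would argue purely geometrically. By definition $\widehat{\theta}_n$ minimises $\theta\mapsto\|{\bf Y}-{\bf X}\theta\|_n^2$, so ${\bf X}\widehat{\theta}_n=P{\bf Y}$, where $P$ denotes the orthogonal projection of $\mathbb{R}^n$ onto the column space of ${\bf X}$. Since ${\bf X}\theta_0$ already lies in that column space, $P{\bf X}\theta_0={\bf X}\theta_0$, whence
\begin{eqnarray*}
{\bf X}(\widehat{\theta}_n-\theta_0)=P{\bf Y}-{\bf X}\theta_0=P({\bf X}\theta_0+\eps)-{\bf X}\theta_0=P\eps\ .
\end{eqnarray*}
Taking expectations and using $\eps\sim\mathcal{N}(0_n,\sigma^2 I_n)$ gives $\mathbb{E}_{\theta_0,\sigma}[\|{\bf X}(\widehat{\theta}_n-\theta_0)\|_n^2]=\sigma^2\,\mathrm{rank}(P)\leq n\sigma^2$, because the column space has dimension at most $n$. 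Dividing by $n\sigma^2$ yields the bound $1$; as this computation is independent of $\theta_0$, $\sigma$, and of ${\bf X}$ beyond $\mathrm{rank}(P)$, it holds uniformly, giving \eqref{inegalite_oracle_plein}.

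For $\widetilde{\theta}^V$ (bound \eqref{inegalite_oracle_baraud}) I would invoke the general risk inequality that the construction in Section \ref{section_theta_V} is designed to satisfy, specialised to fixed design. The key point is that $\widetilde{\theta}^V$ is a variance-free model selection procedure obeying an oracle inequality of the form $\mathbb{E}_{\theta_0,\sigma}[\|{\bf X}(\widetilde{\theta}^V-\theta_0)\|_n^2]\leq C\inf_{m}\{\|{\bf X}(\theta_0-\theta_m)\|_n^2+\mathrm{pen}(|m|)\,\sigma^2\}$, valid for every design, where the penalty for a model of dimension $d$ carries the ultra-high dimensional factor $d\log(ep/d)\exp[C_2\,d\log(ep/d)/n]$ coming from the cost of not knowing $\sigma$. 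Since $\theta_0\in\Theta[k,p]$, I would take $m=\mathrm{supp}(\theta_0)$, so that the approximation term vanishes and $|m|\leq k$, leaving only the penalty. The map $d\mapsto d\log(ep/d)\exp[C_2\,d\log(ep/d)/n]$ is increasing on $(0,p)$ (since $\tfrac{d}{dd}[d\log(ep/d)]=\log(p/d)>0$), so on the range $1\leq k\leq\lfloor(n-1)/4\rfloor$ the penalty is bounded by its value at $d=k$, which is exactly the right-hand side of \eqref{inegalite_oracle_baraud}.

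The main obstacle does not lie in this proposition itself but in the oracle inequality it relies on: establishing that the variance-free procedure pays only this exponentially inflated penalty requires controlling, uniformly over the $\binom{p}{d}$ models of each dimension, the fluctuations of the residual sums of squares used to estimate $\sigma^2$, and this uniform control over $\exp[k\log(ep/k)]$ competing models is what produces the exponential factor. In contrast with the Gaussian design statement (Theorem \ref{thrm_prediction_majoration_minimax}), no Wishart concentration is needed here, since the design is fixed and the projections $P_m$ are deterministic; the empirical risk is precisely the controlled quantity, so the present fixed-design bound is in fact the more primitive of the two and serves as an ingredient in the Gaussian-design argument rather than the reverse.
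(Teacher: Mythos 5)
Your proposal is correct and follows essentially the same route as the paper: for $\widehat{\theta}_n$ the projection identity ${\bf X}(\widehat{\theta}_n-\theta_0)=P\boldsymbol{\epsilon}$ giving $\mathrm{rank}(P)/n\leq 1$, and for $\widetilde{\theta}^V$ a reduction to a variance-free oracle inequality whose penalty carries the exponential factor $\exp\left[C_2\frac{k}{n}\log\left(\frac{ep}{k}\right)\right]$, evaluated at $m=\mathrm{supp}(\theta_0)$ so the bias vanishes. The only caveat is that the oracle inequality you invoke as a black box is exactly what the paper attributes to a result of Baraud, Giraud and Huet~\cite{BGH09} (with the verification, via the multiplicative nature of the log-residual criterion, carried out in its technical appendix), so your argument is complete provided that citation replaces your appeal to "what the construction is designed to satisfy".
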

The risk bound (\ref{inegalite_oracle_baraud}) is also satisfied by the procedure of Baraud et al.~\cite{BGH09}. The proof of (\ref{inegalite_oracle_baraud}) is a consequence  of one of their results.

\begin{remark}
As a consequence, $\tilde{\theta}^{V}$ simultaneously achieves the minimax
risk over all $\Theta[k,p]$ for all ${k\leq \lfloor(n-1)/4\rfloor}$ such that
$k(1+\log(p)/k)\leq n$.
In an ultra-high dimensional setting, the maximum risk of $\tilde{\theta}^{V}$
over
$\Theta[k,p]$ is controlled by $(ep/k)^{Ck/n}$ while the minimax risk is
smaller than $1$. If the upper bound (\ref{inegalite_oracle_baraud}) is sharp then this would imply that $\tilde{\theta}^V$ is not adaptive to the sparsity  in an ultra-high dimensional setting. 

In contrast, $\widehat{\theta}_n$ is minimax adaptive over all
$\Theta[k,p]$ such that $k(1+\log(p)/k)\geq n$, but its behavior is suboptimal in a non-ultra-high dimensional setting.
\end{remark}

In order to get an estimator that is adaptive to all indexes $k$, we would need to merge the properties of $\widetilde{\theta}^V$ (for non-ultra-high dimensional cases) and of $\widehat{\theta}_n$ (for ultra-high dimensional cases). The following proposition tells us that it is in fact impossible.

\begin{prte}{\bf [Adaptation to the sparsity is impossible under unknown variance]}\label{prte_adaptation_impossible}
Consider any $p\geq n\geq C_1$ and $1\leq k\leq \lfloor p^{1/3}\rfloor $ such that
$k\log(ep/k)\geq C_2 n$. There exists a design
${\bf X}$ of size $n\times p$ such that
for any estimator $\widehat{\theta}$, we have either
$$\sup_{\
\sigma>0}\mathbb{E}_{0_p,\sigma}\left[\|{ \bf
X}(\widehat{\theta}-0_{p})\|_n^2/(n\sigma^2)\right] > C\ ,
$$
$$\text{or}\hspace{2cm} \sup_{\sigma>0}\ \sup_{\theta_0\in\Theta[k,p]}\mathbb{E}_{\theta_0,\sigma}\left[\|{\bf
X}(\widehat{\theta}-\theta_0)\|_n^2/(n\sigma^2)\right]
 > \exp\left[C\frac{k}{n}\log\left(\frac{ep}{k}\right)\right]\
. \hspace{2cm}$$
As a benchmark, we recall the minimax upper bounds:
$$\mathcal{R}_{F}[1]\leq C_1\frac{\log(p)}{n}\quad \text{ and }\quad
\mathcal{R}_{F}[k]\leq C_2\left[\frac{k}{n}\log\left(\frac{ep}{k}\right) \wedge 1\right]\ .$$
\end{prte}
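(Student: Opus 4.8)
The plan is to deduce this estimation dichotomy from the testing lower bound of Proposition~\ref{prte_minoration_minimax_test_fixed_unknown_variance} via a reduction from estimation to testing, reasoning by contraposition. First I would fix once and for all $\alpha=\delta=13\%$ (so that $\alpha+\delta\leq 26\%$) and take the design $\mathbf{X}$ of size $n\times p$ furnished by~(\ref{borne_minoration_test_fixed_unknown}), that is a design for which $(\rho^*_{F,U}[k,\mathbf{X}])^2\geq C_1\frac{k}{n}\log(p)\exp[C_2\frac{k}{n}\log(p)]$. Since $k\leq \lfloor p^{1/3}\rfloor$ one has $\log(ep/k)\asymp\log(p)$, and since $k\log(ep/k)\geq C_2n$ the polynomial prefactor is bounded below by a constant; hence this design satisfies $(\rho^*_{F,U}[k,\mathbf{X}])^2\geq \exp[C_3\frac{k}{n}\log(ep/k)]$ for some universal $C_3>0$. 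The goal is then to show that if the maximal risk at $0_p$ is at most a small universal constant $C$, the maximal risk over $\Theta[k,p]$ must exceed $\exp[C\frac{k}{n}\log(ep/k)]$.

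So let $\widehat{\theta}$ be an estimator with $\sup_{\sigma>0}\mathbb{E}_{0_p,\sigma}[\|\mathbf{X}\widehat{\theta}\|_n^2/(n\sigma^2)]\leq C$, and write $B:=\sup_{\sigma>0}\sup_{\theta_0\in\Theta[k,p]}\mathbb{E}_{\theta_0,\sigma}[\|\mathbf{X}(\widehat{\theta}-\theta_0)\|_n^2/(n\sigma^2)]$ for the maximal risk over $\Theta[k,p]$. From $\widehat{\theta}$ I would build a test of $\mathbf{H_0}$. Because the variance is unknown I cannot threshold $\|\mathbf{X}\widehat{\theta}\|_n^2$ directly; instead I use the \emph{scale-free} statistic $S:=\|\mathbf{X}\widehat{\theta}\|_n^2/\|\mathbf{Y}\|_n^2$ and reject $\mathbf{H_0}$ when $S\geq \tfrac12$. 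The role of the observable normalization $\|\mathbf{Y}\|_n^2$ is that the level is then controlled \emph{uniformly} in $\sigma$: under $\mathbb{P}_{0_p,\sigma}$ one has $\|\mathbf{Y}\|_n^2=\|\boldsymbol{\epsilon}\|_n^2$, which concentrates around $n\sigma^2$, while Markov's inequality together with the assumed risk bound gives $\mathbb{P}_{0_p,\sigma}[\|\mathbf{X}\widehat{\theta}\|_n^2\geq n\sigma^2/4]\leq 4C$. Taking the universal constant $C$ small enough (of order $\alpha$) and $n\geq C_1$ large enough for the $\chi^2$ concentration, the test has level at most $\alpha$ for every $\sigma>0$, since the scaling in $\sigma$ cancels.

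For the power, fix $\theta_0\in\Theta[k,p]$ and $\sigma>0$ with $\|\mathbf{X}\theta_0\|_n^2=n\rho^2\sigma^2$ where $\rho^2\geq 16B/\delta$. Markov's inequality yields $\|\mathbf{X}(\widehat{\theta}-\theta_0)\|_n^2\leq Bn\sigma^2/\delta$ with probability at least $1-\delta$, whence by the triangle inequality $\|\mathbf{X}\widehat{\theta}\|_n\geq \sqrt{n}\,\sigma\rho(1-\tfrac14)$; moreover $\|\mathbf{Y}\|_n^2$ concentrates around $\|\mathbf{X}\theta_0\|_n^2=n\rho^2\sigma^2$, the signal dominating both $\|\boldsymbol{\epsilon}\|_n^2\approx n\sigma^2$ and the cross term once $\rho$ is large. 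On this event $S\geq \tfrac12$, so the test has power at least $1-\delta$ uniformly over all $\theta_0$ with $\|\mathbf{X}\theta_0\|_n/(\sqrt{n}\,\sigma)\geq \rho$, provided $\rho^2\geq 16B/\delta$.

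Combining the two steps closes the argument: if we had $B<\delta(\rho^*_{F,U}[k,\mathbf{X}])^2/16$, I could pick $\rho^2$ with $16B/\delta<\rho^2<(\rho^*_{F,U}[k,\mathbf{X}])^2$, producing a level-$\alpha$ test whose $\delta$-separation distance over $\Theta[k,p]$ is strictly below $\rho^*_{F,U}[k,\mathbf{X}]$, contradicting the very definition of $\rho^*_{F,U}[k,\mathbf{X}]$. Hence $B\geq \delta(\rho^*_{F,U}[k,\mathbf{X}])^2/16\geq \exp[C\frac{k}{n}\log(ep/k)]$, the constant $\delta/16$ being absorbed into a slightly smaller exponent using $k\log(ep/k)\geq C_2n$, which gives the claimed dichotomy. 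The main obstacle is the unknown-variance bookkeeping: one must keep the test scale-free so that its level holds uniformly in $\sigma$, while simultaneously forcing $C$ to be small and keeping the level, power and separation constants mutually compatible. It is precisely the $\chi^2$ concentration of $\|\mathbf{Y}\|_n^2$ under both hypotheses, combined with the largeness of the admissible separation $\rho$ in the ultra-high dimensional regime, that makes these competing constraints consistent.
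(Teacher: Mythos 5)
Your proposal is correct and takes essentially the same route as the paper's own proof: both build from $\widehat{\theta}$ the scale-free test that rejects when $2\|{\bf X}\widehat{\theta}\|_n^2\geq \|{\bf Y}\|_n^2$ (your $S\geq 1/2$), invoke the unknown-variance testing lower bound (\ref{borne_minoration_test_fixed_unknown}) of Proposition \ref{prte_minoration_minimax_test_fixed_unknown_variance}, and translate failure of the level into a risk bound at $0_p$ and failure of the power into a risk bound over $\Theta[k,p]$ — the paper merely states this as a direct dichotomy where you argue by contraposition. The only caveat is constant bookkeeping in the power step: with the crude factor-$3/2$ noncentral-$\chi^2$ bound, the margin $\rho^2\geq 16B/\delta$ (error fraction $1/4$) gives only $S\geq \frac{3\rho^2}{8(1+\rho^2)}<\frac12$, so one must either use the $(1+\epsilon)$-concentration of $\|{\bf Y}\|_n^2$ valid once $\rho$ is bounded below by a large constant (available here since $(\rho^*_{F,U}[k,{\bf X}])^2$ blows up), or enlarge the margin to, say, $64B/\delta$ — an adjustment your closing remark already anticipates.
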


The proof of proposition \ref{prte_adaptation_impossible} is based on the minimax lower
bounds (\ref{borne_minoration_test_fixed_unknown}) for the testing problem
(${\bf P_1}$) under unknown variance. The proof uses designs ${\bf X}$ that are realizations of standard Gaussian designs.

\begin{remark}
 In the setup of Proposition \ref{prte_adaptation_impossible}, any estimator
$\widehat{\theta}$ that does not require the knowledge of $k$ and $\sigma^2$ has to pay at least one of
these two
prices:
\begin{enumerate}
\item The estimator $\widehat{\theta}$ does not use the sparsity of the true
parameter $\theta_0$. Its risk for estimating $0_p$ is of the same order as the
minimax risk over $\mathbb{R}^p$. The estimator $\widehat{\theta}_n$ has this
drawback.
\item For any $1\leq k\leq p^{1/3}$, we have
\begin{eqnarray*}
 \sup_{\bf X}\sup_{\sigma>0}\sup_{\theta_0\in\Theta[k,p]}
\mathbb{E}_{\theta_0,\sigma}\left[\|{\bf
X}(\widehat{\theta}-\theta_0)\|_n^2/(n\sigma^2)\right]
\geq
C_1\frac{k}{n}\log\left(\frac{ep}{k}\right)\exp\left[C_2\frac{k}{n}
\log\left(\frac{ep}{k}\right)\right ]\ .
\end{eqnarray*}
This is the price for adaptation when $\sigma^2$ is unknown.
The estimator $\widetilde{\theta}^{V}$ exhibits this behavior.
\end{enumerate}
As a conclusion, it is impossible to merge the qualities of
$\widetilde{\theta}^{V}$ and of $\widehat{\theta}_n$. 

The best prediction risk that can be achieved by a procedure that aim to adaptation to the sparsity is of order
$$\frac{k}{n}\log\left(\frac{p}{k}\right)\exp\left[C\frac{k}{n}\log\left(p/k\right)\right]\ .$$
In other words, the unavoidable loss for adaptation for unknown variance is a factor  $\exp[Ck/n\log(p/k)]$ 
In this sense,  the estimator $\widetilde{\theta}^V$ (and as a byproduct the procedure of Baraud et al.~\cite{BGH09}) achieves the optimal prediction risk  under
unknown variance and unknown sparsity.
\end{remark}

\vspace{0.5cm}

In conclusion, the minimax risks of prediction are of the same order for
fixed and Gaussian design and for known and unknown variance when
$k\log(p/k)$ is small compared to $n$. In an ultra-high dimensional setting, the
minimax risks behave differently. For Gaussian design, the minimax risk is of the
order $(p/k)^{Ck/n}$. In contrast, the minimax risk of prediction remains
smaller than one for fixed design regression with known variance. When the sparsity and the variance are unknown, there is a price to pay for adaptation under fixed design. All these behaviors are depicted on Figure \ref{figure02}.

\section{Inverse problem and support estimation}\label{section_inverse}

\subsection{Minimax risk of estimation}

We recall that the  minimax risks of estimation for the inverse problem
$\mathcal{RI}_F[k,{\bf X}]$, $\mathcal{RI}_F[k]$, $\mathcal{RI}_R[k,\Sigma]$, and $\mathcal{RI}_R[k]$ have been defined in Section \ref{section_main_result_inverse}.

\subsubsection{Fixed design}

First, we consider the problem (${\bf P_3}$) for a fixed design regression
model. The minimax risk of estimation over $\Theta[k,p]$
with a design ${\bf X}$ is noted  $\mathcal{RI}_F[k,{\bf X}]$ and is defined in
(\ref{defi_risque_minimax_inverse}). Raskutti et al.~\cite{raskwain09} have recently provided the following bounds
\begin{eqnarray}\label{minoration_inverse_design_fixe_rask}
 C_1\left[
\frac{k\log(ep/k)}{\varPhi_{2k\wedge p,+}\left({\bf X}\right)}\right] \leq \mathcal{RI}_F[k,{\bf X}]\leq C_2
\frac{k\log\left(ep/k\right)}{\varPhi_{2k\wedge p,-}({\bf X})}\ ,
\end{eqnarray}
that holds for any fixed design ${\bf X}$ and any $1\leq k\leq n$. The lower and upper bounds match up to the factor $\varPhi_{2k\wedge p,+}({\bf X})/\varPhi_{2k\wedge p,-}({\bf X})$. The upper bound is achieved by least-squares estimator over $\Theta[k,p]$~\cite{raskwain09}.
 If the restricted eigenvalues of ${\bf X}$  are close to one, then
the minimax risk is of order $k\log(ep/k)$. 
Next, we improve the lower bound in (\ref{minoration_inverse_design_fixe_rask}) in order to grasp  the behavior of the minimax risk for non orthogonal design.
\begin{prte}\label{prte_minoration_inverse_fixe}
For any design ${\bf X}$ and any $1\leq k\leq n$, we have 
\begin{eqnarray}\label{minoration_inverse_design_fixe}
\mathcal{RI}_F[k,{\bf X}]\geq C\left[
\frac{1}{\varPhi_{2k\wedge p,-}\left({\bf X}\right)}\vee
\frac{k\log(ep/k)}{\varPhi_{1,+}\left({\bf X}\right)}\right]\ .
\end{eqnarray}
\end{prte}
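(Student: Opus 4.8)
The plan is to lower bound $\mathcal{RI}_F[k,{\bf X}]$ by each of the two terms in the maximum separately, after which combining is immediate: if $\mathcal{RI}_F[k,{\bf X}]\geq C_a/\varPhi_{2k\wedge p,-}({\bf X})$ and $\mathcal{RI}_F[k,{\bf X}]\geq C_b\, k\log(ep/k)/\varPhi_{1,+}({\bf X})$, then $\mathcal{RI}_F[k,{\bf X}]\geq \min(C_a,C_b)\big[\frac{1}{\varPhi_{2k\wedge p,-}({\bf X})}\vee\frac{k\log(ep/k)}{\varPhi_{1,+}({\bf X})}\big]$. The first term comes from a Le Cam two-point argument exploiting the smallest restricted eigenvalue, and the second from a Fano packing argument in which random signs are used to replace $\varPhi_{2k,+}({\bf X})$ by the maximal column norm $\varPhi_{1,+}({\bf X})=\max_i\|{\bf X}_i\|_n^2$, thereby improving the existing lower bound in \eref{minoration_inverse_design_fixe_rask}.

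For the \textbf{first term}, I would let $u^*$ attain $\varPhi_{2k\wedge p,-}({\bf X})=\|{\bf X}u^*\|_n^2/\|u^*\|_p^2$ over $\Theta[2k\wedge p,p]\setminus\{0_p\}$ (the infimum is attained by compactness; if $\varPhi_{2k\wedge p,-}=0$ the risk is infinite and there is nothing to prove). Since $\supp(u^*)$ has cardinality at most $2k\wedge p\leq 2k$, I split it into disjoint $S_1,S_2$ of size at most $k$ and set $\theta^{(0)}:=u^*_{S_1}$ and $\theta^{(1)}:=-u^*_{S_2}$, both $k$-sparse with $\theta^{(0)}-\theta^{(1)}=u^*$. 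Rescaling $u^*$ so that $\|{\bf X}u^*\|_n^2=c_0\sigma^2$ for a small absolute $c_0$ makes the Kullback--Leibler divergence between $\mathbb{P}_{\theta^{(0)},\sigma}$ and $\mathbb{P}_{\theta^{(1)},\sigma}$ equal to $\|{\bf X}u^*\|_n^2/(2\sigma^2)=c_0/2$, so the two laws are not separable in total variation. Le Cam's inequality then yields $\mathcal{RI}_F[k,{\bf X}]\geq C\|u^*\|_p^2/\sigma^2 = C\|{\bf X}u^*\|_n^2/(\sigma^2\varPhi_{2k\wedge p,-}({\bf X}))=Cc_0/\varPhi_{2k\wedge p,-}({\bf X})$.

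For the \textbf{second term} I would use an averaged Fano bound. First fix a collection $\mathcal{N}\subset\mathcal{M}(k,p)$ with $|\mathcal{N}|=M\geq\exp[ck\log(ep/k)]$ and $|m\triangle m'|\geq k/2$ for distinct $m,m'$ (a constant-weight Gilbert--Varshamov code). For a scale $\rho>0$ and signs $\epsilon=(\epsilon^{(m)})_m$ set $\theta^{(m)}:=\rho\sum_{i\in m}\epsilon^{(m)}_i e_i$. The key point is that the separation is insensitive to the signs: each index of $m\triangle m'$ contributes $\rho^2$, so $\|\theta^{(m)}-\theta^{(m')}\|_p^2\geq\rho^2|m\triangle m'|\geq \rho^2 k/2$ for every choice of $\epsilon$. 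Dropping a nonpositive global cross term gives $\frac{1}{M^2}\sum_{m,m'}\|{\bf X}(\theta^{(m)}-\theta^{(m')})\|_n^2\leq\frac{2}{M}\sum_m\|{\bf X}\theta^{(m)}\|_n^2$, and with i.i.d. Rademacher signs $\mathbb{E}_{\epsilon}\|{\bf X}\theta^{(m)}\|_n^2=\rho^2\sum_{i\in m}\|{\bf X}_i\|_n^2\leq\rho^2 k\varPhi_{1,+}({\bf X})$ because the within-support cross terms $\sum_{i\neq i'}\epsilon_i\epsilon_{i'}\langle{\bf X}_i,{\bf X}_{i'}\rangle$ average to zero. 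Hence some realization of $\epsilon$ satisfies $\frac{1}{M}\sum_m\|{\bf X}\theta^{(m)}\|_n^2\leq\rho^2 k\varPhi_{1,+}({\bf X})$; fixing it, the mutual information between the uniform index and the data is at most the average pairwise Kullback--Leibler divergence, namely $I\leq\rho^2 k\varPhi_{1,+}({\bf X})/\sigma^2$. Choosing $\rho^2\asymp\sigma^2\log(ep/k)/\varPhi_{1,+}({\bf X})$ forces $I\leq\tfrac12\log M$, and Fano's inequality gives $\mathcal{RI}_F[k,{\bf X}]\geq C\rho^2 k/\sigma^2=C\,k\log(ep/k)/\varPhi_{1,+}({\bf X})$.

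The \textbf{main obstacle} is the simultaneous control, in the second step, of three competing demands: a packing of cardinality $\exp[\Omega(k\log(ep/k))]$, a separation of order $\rho^2 k$, and an average energy $\frac1M\sum_m\|{\bf X}\theta^{(m)}\|_n^2$ governed by $\varPhi_{1,+}({\bf X})$ rather than by $\varPhi_{2k,+}({\bf X})$. The device that makes it work is to base the separation on the support-Hamming distance alone, so that it is decoupled from the sign randomization used to annihilate the within-support cross terms; together with discarding the nonpositive global cross term in the averaged Fano bound, this is exactly what replaces $\varPhi_{2k,+}({\bf X})$ by $\varPhi_{1,+}({\bf X})$ and sharpens \eref{minoration_inverse_design_fixe_rask}. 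The remaining work is routine: tracking constants so that $I\leq\tfrac12\log M$ and $\log 2\leq\tfrac12\log M$, and checking the regimes $2k\gtrless p$ and small $k$ to recover $\log(ep/k)$ uniformly.
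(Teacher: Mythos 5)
Your proposal is correct and follows essentially the same two-part strategy as the paper: for the first term, a two-point argument that splits a $2k$-sparse vector attaining $\varPhi_{2k\wedge p,-}({\bf X})$ into two disjointly supported $k$-sparse vectors (the paper runs this through the two-point case of Lemma \ref{lemme_fano}, you through Le Cam -- the same thing); for the second term, a Varshamov--Gilbert packing of supports combined with a sign-selection device that replaces $\varPhi_{2k\wedge p,+}({\bf X})$ by $\varPhi_{1,+}({\bf X})$, followed by Fano. The only substantive difference is how the signs are chosen and which form of Fano this forces. The paper fixes the signs deterministically, by an induction that adds one coordinate at a time and keeps whichever of the two sign choices has smaller energy, yielding the per-vector bound $\|{\bf X}\theta_m\|_n^2\leq \varPhi_{1,+}({\bf X})$ for \emph{every} element of the packing; this uniform control is exactly what Birg\'e's version of Fano (Lemma \ref{lemme_fano}), which requires a bound on the \emph{maximal} pairwise Kullback--Leibler divergence, needs. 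You instead draw i.i.d.\ Rademacher signs and bound only the \emph{average} energy over the packing, so you must (and correctly do) switch to the mutual-information form of Fano with the average pairwise divergence, after discarding the nonpositive global cross term. Both implementations are valid and yield the same rate: yours avoids the induction and is arguably slicker, while the paper's derandomized construction gives the stronger per-vector guarantee (which it also reuses verbatim in the proofs of Propositions \ref{cor_minoration_inverse_dimension_pas_trop_grande} and \ref{prte_minoration_support_estimation}).
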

In order to interpret these bounds let us restrict ourselves to design ${\bf X}$ such that each column has $\sqrt{n}$ norm, as justified in
Section \ref{section_main_result_inverse}. The collection of such designs is noted $\mathcal{D}_{n,p}$. Observe that ${\bf X}\in\mathcal{D}_{n,p}$ enforces $\varPhi_{1,+}\left({\bf X}\right)=n$.

In the sequel, we are interested in the smallest minimax risk $\mathcal{RI}_F[k,{\bf X}]$ that is achievable if we can choose the $n\times p$ design ${\bf X}\in\mathcal{D}_{n,p}$, that is we want to bound 
$\mathcal{RI}_F[k]= \inf_{{\bf
X}\in\mathcal{D}_{n,p}} \mathcal{RI}_F[k,{\bf X}]$. The  minimax risk  $\mathcal{RI}_F[k]$ tells us the intrinsic difficulty of estimating a $k$ sparse vector of size $p$ with $n$ observations.

\begin{prte} \label{cor_minoration_inverse_dimension_pas_trop_grande}

\begin{enumerate}
~
 \item Assume that $k[1+\log(p/k)]\leq Cn$. Then, we have
\begin{equation}\label{equation_minoration_inversion_raisonnable}
C_1
\frac{k}{n}\log\left(\frac{ep}{k}\right)\leq
\mathcal{RI}_F[k]\leq C_2
\frac{k}{n}\log\left(\frac{ep}{k}\right)\ .
\end{equation}
This bound is for instance achieved for designs ${\bf X}$ that
are realizations (with a high probability)  of normalized standard Gaussian design.
\item For any design ${\bf X}\in\mathcal{D}_{n,p}$ and any
$k\leq n\wedge p/2$, we have 
\begin{eqnarray}\label{minoration_valeur_propre_restreinte_design_fixe}
 \varPhi_{2k,-}({\bf X})\leq C_1n\left(\frac{k}{ep}\right)^{C_2k/n}\ .
\end{eqnarray}
\item For any $k\leq n/4\wedge p/2$, we have  
\begin{equation}\label{equation_equivalent_asymptotique_inverse_minimax}
C_1\left[\frac{k}{n}\log\left(\frac{ep}{k}\right)\vee \frac{1}{n}\exp\left\{C_4\frac{k}{n}\log\left(\frac{p}{k}\right)\right\}\right]\leq \mathcal{RI}_F[k]\leq C_2\frac{k}{n}\log\left(\frac{p}{k}\right)\exp\left[C_3\frac{k}{n}\log\left(\frac{p}{k}\right)\right]\ .
\end{equation}
\end{enumerate}
\end{prte}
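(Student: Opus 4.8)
The three assertions all reduce to pinning down how large the restricted lower eigenvalue $\varPhi_{2k,-}({\bf X})$ can be over $\mathcal{D}_{n,p}$, so the second assertion (\ref{minoration_valeur_propre_restreinte_design_fixe}) is the heart of the matter and I would prove it first. Indeed, every ${\bf X}\in\mathcal{D}_{n,p}$ has $\varPhi_{1,+}({\bf X})=n$, so Proposition \ref{prte_minoration_inverse_fixe} gives $\mathcal{RI}_F[k,{\bf X}]\geq C[\varPhi_{2k\wedge p,-}({\bf X})^{-1}\vee \frac{k}{n}\log(ep/k)]$ uniformly in ${\bf X}$; taking the infimum over $\mathcal{D}_{n,p}$ of this maximum yields $\mathcal{RI}_F[k]\geq \max\{\frac{k}{n}\log(ep/k),\, C[\sup_{{\bf X}}\varPhi_{2k,-}({\bf X})]^{-1}\}$. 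Symmetrically, the upper bound (\ref{minoration_inverse_design_fixe_rask}) of Raskutti et al.\ gives $\mathcal{RI}_F[k]\leq C\,k\log(ep/k)\,[\sup_{{\bf X}}\varPhi_{2k,-}({\bf X})]^{-1}$. Hence every bound in the statement follows once $\sup_{{\bf X}\in\mathcal{D}_{n,p}}\varPhi_{2k,-}({\bf X})$ is controlled from both sides.

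For the first assertion the lower bound is exactly the display above, which holds in every regime. For the matching upper bound it suffices to exhibit one design with $\varPhi_{2k,-}({\bf X})\geq cn$: a matrix with i.i.d.\ standard Gaussian entries, rescaled so each column has norm $\sqrt{n}$, satisfies this with high probability once $k[1+\log(p/k)]\leq Cn$. This is the standard restricted-eigenvalue estimate, obtained from a deviation bound on the extreme singular values of a fixed $n\times 2k$ Gaussian block (in the spirit of Lemma \ref{lemma_concentration_vp_wishart}) together with a union bound over the $\binom{p}{2k}$ supports, the entropy term $\log\binom{p}{2k}\asymp k\log(ep/k)$ being precisely what must stay below $cn$. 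Plugging $\varPhi_{2k,-}({\bf X})\geq cn$ into (\ref{minoration_inverse_design_fixe_rask}) closes the bound.

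The main obstacle is the geometric estimate (\ref{minoration_valeur_propre_restreinte_design_fixe}), which quantifies Remark \ref{remarque_matrices}: no normalized design can keep all $2k$-subsets of columns well conditioned once $k\log(p/k)\gg n$. My plan is a packing-versus-covering argument. Writing $u_i={\bf X}_i/\sqrt{n}$ and $\varepsilon=\varPhi_{2k,-}({\bf X})/n$, I would fix a combinatorial packing $\mathcal{N}$ of $k$-sparse unit vectors that is $\rho$-separated in $\ell_2$ with $\rho$ a numerical constant and $\log|\mathcal{N}|\geq c\,k\log(ep/k)$ (choose supports from a constant-weight code of large Hamming distance, then attach signs). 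Since $\theta-\theta'$ is $2k$-sparse, the lower restricted-eigenvalue inequality makes the images $\{{\bf X}\theta/\sqrt{n}:\theta\in\mathcal{N}\}$ pairwise $\sqrt{\varepsilon}\rho$-separated, while each has squared norm at most $\varPhi_{k,+}({\bf X})/n\leq k$. Comparing $|\mathcal{N}|$ with the covering number of a ball of radius $\sqrt{k}$ gives $c\,k\log(ep/k)\leq \frac{n}{2}\log(C''k/\varepsilon)$, i.e.\ $\varPhi_{2k,-}({\bf X})=n\varepsilon\leq C''\,nk\,(k/ep)^{C_2 k/n}$. The delicate point — the step I expect to fight with — is the spurious factor $k$ from bounding the ball radius by $\sqrt{\varPhi_{k,+}({\bf X})/n}\leq\sqrt{k}$: it is harmless when $\frac{k}{n}\log(ep/k)$ is large but must be reabsorbed near the elbow by intersecting the packing bound with the trivial estimate $\varPhi_{2k,-}({\bf X})\leq n$ (valid since any $2k\times 2k$ principal submatrix of ${\bf X}^{T}{\bf X}$ has trace $2kn$, hence smallest eigenvalue at most $n$), so that the only regime where the factor $k$ would bite is one in which the bound is already trivial up to constants. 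Carried out with care this yields (\ref{minoration_valeur_propre_restreinte_design_fixe}).

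Finally, for the third assertion the lower bound combines the first paragraph with (\ref{minoration_valeur_propre_restreinte_design_fixe}): the $\varPhi_{2k,-}^{-1}$ term gives $\mathcal{RI}_F[k]\geq [\sup_{{\bf X}}\varPhi_{2k,-}({\bf X})]^{-1}\geq \frac{1}{C_1 n}(ep/k)^{C_2 k/n}\geq \frac{1}{C_1 n}\exp[C_4\frac{k}{n}\log(p/k)]$, and the maximum with the polynomial term $\frac{k}{n}\log(ep/k)$ is the claimed bound. The upper bound again uses (\ref{minoration_inverse_design_fixe_rask}), but now it demands a design that is \emph{near-extremal} for $\varPhi_{2k,-}$ in the ultra-high regime, i.e.\ one achieving $\varPhi_{2k,-}({\bf X})\gtrsim n\exp[-C_3\frac{k}{n}\log(p/k)]$. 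This is the second genuine difficulty, since a plain normalized Gaussian design is already singular on some $2k$-subset once $k\log(p/k)\gg n$; the matching construction must instead keep every $2k$-subset positive-definite with a quantitatively controlled, exponentially small smallest eigenvalue, so I would seek it among structured or suitably conditioned ensembles tuned so that the worst-conditioned $2k$-subset meets the packing bound of (\ref{minoration_valeur_propre_restreinte_design_fixe}) with a matching constant, thereby closing the two-sided control of $\sup_{{\bf X}}\varPhi_{2k,-}({\bf X})$ and hence of $\mathcal{RI}_F[k]$.
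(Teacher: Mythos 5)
Your reduction of all three assertions to a two-sided control of $\sup_{{\bf X}\in\mathcal{D}_{n,p}}\varPhi_{2k,-}({\bf X})$ is the right frame, and your first assertion matches the paper's argument (lower bound from Proposition \ref{prte_minoration_inverse_fixe} with $\varPhi_{1,+}({\bf X})=n$, upper bound from (\ref{minoration_inverse_design_fixe_rask}) plus a Gaussian design with restricted eigenvalues of order $n$). But your proof of the key geometric bound (\ref{minoration_valeur_propre_restreinte_design_fixe}) has a genuine gap: the ``spurious factor $k$'' is not harmless, and intersecting with the trivial estimate $\varPhi_{2k,-}({\bf X})\leq n$ does not repair it. Writing $\varepsilon:=\varPhi_{2k,-}({\bf X})/n$ and $t:=\frac{k}{n}\log(ep/k)$, your two bounds give $\varepsilon\leq\min\{1,\,Cke^{-ct}\}$, while the target is $\varepsilon\leq C_1e^{-C_2t}$. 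In any regime where $1\ll t\ll\log k$ the minimum equals $1$ (since $ke^{-ct}\geq e^{\log k-ct}\gg1$), yet the target is $o(1)$; such regimes are nonempty under the hypothesis $k\leq n\wedge p/2$, e.g.\ $k=\sqrt{p}$ and $n\asymp\sqrt{p}\log p/\log\log p$, for which $t\asymp\log\log p$ while $\log k\asymp\log p$. So your argument returns only the trivial estimate exactly where a nontrivial one is claimed. The paper removes the factor $k$ at the source rather than a posteriori: for each support $m$ in the combinatorial family, the signs of the entries $\pm1/\sqrt{k}$ are chosen by induction on the columns (at each step one of the two signs makes the cross term nonpositive), which forces $\|{\bf X}\theta_m\|_n^2\leq\varPhi_{1,+}({\bf X})=n$. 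All packing images then lie in a ball of radius $\sqrt{n}(1+\delta)$, not $\sqrt{nk}$, and the volume comparison directly yields $\delta\leq2(k/ep)^{Ck/n}$; this sign-selection construction is set up in the proof of Proposition \ref{prte_minoration_inverse_fixe}.

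The second gap is that you do not prove the upper bound in (\ref{equation_equivalent_asymptotique_inverse_minimax}), and the premise on which you abandon the Gaussian design is false: a normalized standard Gaussian design is not ``singular on some $2k$-subset'' when $k\log(p/k)\gg n$ --- almost surely every $n\times 2k$ submatrix has full column rank, and quantitatively its smallest restricted eigenvalue is exponentially small but not zero. The paper's point is precisely that this design remains near-extremal in the ultra-high regime: the lower-tail Wishart deviation inequality (\ref{majoration_wishart_queue_distribution}) of Lemma \ref{lemma_concentration_vp_wishart}, which (unlike the Rudelson--Vershynin bound) is valid at probability levels as small as one over $p\binom{p}{2k}$, gives after a union bound over supports $\varPhi^{-1}_{2k,-}({\bf X})\leq C_1n(p/k)^{C_2k/n}\left[\frac{k}{n}\log(p/k)\vee1\right]$ with probability at least $1-1/p$; the column normalization is then controlled by the $\chi^2$ deviations of Lemma \ref{lemma_concentration_chi2}, which transfers the bound to a design in $\mathcal{D}_{n,p}$ at the price of a factor $1\vee\log(p)/n$, and plugging the result into (\ref{minoration_inverse_design_fixe_rask}) closes the upper bound. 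Without this (or an equivalent explicit construction), your third assertion is only half proved.
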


\begin{remark} The bound (\ref{equation_minoration_inversion_raisonnable}) tells us that the best minimax risk that is achievable in a non-ultra-high dimensional setting is of order $k\log(ep/k)/n$. The Lasso achieves the (almost optimal) risk bound $k\log(p)/n$ under  some assumptions on the design matrix.
\end{remark}

\begin{remark}
The lower bound (\ref{minoration_valeur_propre_restreinte_design_fixe}) is of geometric nature. Combined with (\ref{minoration_inverse_design_fixe}), it implies the lower bound of (\ref{equation_equivalent_asymptotique_inverse_minimax}). In an ultra-high dimensional setting, it is not  possible to build a design ${\bf X}$ such that $\varPhi_{2k,+}\left({\bf X}\right)/\varPhi_{2k,-}\left({\bf
X}\right)$ is  close to one (see Remark \ref{remarque_matrices}). In fact, the quantity $\varPhi^{-1}_{2k,-}({\bf X})$ blows up because of geometric constrains. When $k[1+\log(p/k)]$ is larger compared to $n\log(n)$, both bounds in (\ref{equation_equivalent_asymptotique_inverse_minimax}) are comparable and the minimax risk is of order $\exp[Ck/n\log(p/k)]$.  As a consequence, the inverse problem becomes extremely difficult in an ultra-high dimensional setting.
\end{remark}

\begin{remark}
 While the quantity $k\log(p/k)$ in (\ref{equation_minoration_inversion_raisonnable}) is due to the ``size" of the parameter space
$\Theta[k,p]$, the exponential term of the minimax risk in ultra-high dimension is essentially driven by
geometrical constrains on the design ${\bf X}$.
\end{remark}

\begin{prte}[Adaptation to the sparsity and the variance]\label{prte_majoration_probleme_inverse_adaptation} As in the prediction case, we consider the estimator $\widetilde{\theta}^V$ (defined in Section \ref{section_theta_V}). Assume that $p\geq 2n$. For any design ${\bf X}$, any $\sigma>0$, any $1\leq k\leq \lfloor (n-1)/4\rfloor$, and any $\theta_0\in\Theta[k,p]$, we have  
\begin{equation}\label{eq_prte_majoration_probleme_inverse_adaptation}
 \frac{\|\widetilde{\theta}^V-\theta_0\|_p^2}{\sigma^2}\leq C_1 \frac{k}{\varPhi_{3k,-}({\bf X})}\log\left(\frac{ep}{k}\right)\exp\left[C_2\frac{k}{n}\log\left(\frac{ep}{k}\right)\right]\ ,
\end{equation}
with probability larger than $1-e^{-n}-C/p$.
\end{prte}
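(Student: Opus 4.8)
The plan is to deduce the estimation bound from the prediction bound for $\widetilde{\theta}^V$ via the smallest restricted eigenvalue $\varPhi_{3k,-}({\bf X})$. The analysis of $\widetilde{\theta}^V$ in Section \ref{section_theta_V} (the same estimator underlying Theorem \ref{thrm_prediction_majoration_minimax} and Proposition \ref{prte_risque_baraudgiraud}) provides, on an event $\mathcal{A}$ of probability at least $1-e^{-n}-C/p$, a control of the prediction error of the form
$$\|{\bf X}(\widetilde{\theta}^V-\theta_0)\|_n^2\leq C_1\, k\log\left(\frac{ep}{k}\right)\exp\left[C_2\frac{k}{n}\log\left(\frac{ep}{k}\right)\right]\sigma^2\ .$$
The expectation bound (\ref{inegalite_oracle_baraud}) is obtained by integrating exactly such a deviation inequality, so this high-probability prediction control is already available from that construction.

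The second ingredient I would establish is a sparsity control: on the event $\mathcal{A}$, the model selected by $\widetilde{\theta}^V$ has dimension at most $2k$, so that $\supp(\widetilde{\theta}^V)$ has cardinality at most $2k$. Since $\theta_0\in\Theta[k,p]$, the difference then satisfies $\widetilde{\theta}^V-\theta_0\in\Theta[3k,p]$. Applying the definition of $\varPhi_{3k,-}({\bf X})$ from Definition \ref{definition_propriete_RIP} to this $3k$-sparse vector yields
$$\varPhi_{3k,-}({\bf X})\,\|\widetilde{\theta}^V-\theta_0\|_p^2\leq \|{\bf X}(\widetilde{\theta}^V-\theta_0)\|_n^2\ .$$
Combining the two displays, dividing by $\varPhi_{3k,-}({\bf X})\sigma^2$, and relabelling the constants gives precisely the announced bound (\ref{eq_prte_majoration_probleme_inverse_adaptation}) on $\mathcal{A}$, hence with probability at least $1-e^{-n}-C/p$.

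The delicate step is the sparsity control, since the restricted eigenvalue argument is only legitimate once we know that $\widetilde{\theta}^V-\theta_0$ lies in $\Theta[3k,p]$. I expect this to follow directly from the penalty used to define $\widetilde{\theta}^V$: the penalty must grow fast enough in the model dimension to rule out the selection of models of size larger than $2k$ on $\mathcal{A}$, and the hypotheses $p\geq 2n$ and $k\leq\lfloor(n-1)/4\rfloor$ are what guarantee that the underlying concentration inequalities hold and keep the selected dimension below $2k$. Once this dimension bound is secured on the same event as the prediction control, the conversion from $\|{\bf X}\cdot\|_n^2$ to the $l_2$-loss $\|\cdot\|_p^2$ is immediate.
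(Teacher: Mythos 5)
Your overall architecture --- a high-probability prediction bound for $\widetilde{\theta}^V$, a control of the size of the selected support, and the conversion to the $\|\cdot\|_p$ loss through $\varPhi_{3k,-}({\bf X})$ --- is exactly the route the paper takes (its proof is in Appendix B of \cite{technical}, and the requirements it imposes on the tuning constant $K$ are the ones cited in Section \ref{section_theta_V}); your restricted-eigenvalue step is correct as stated, since a selected dimension at most $2k$ makes $\widetilde{\theta}^V-\theta_0$ a $3k$-sparse vector. The problem is that your two inputs are both \emph{imported} rather than proven, and the second one is precisely the mathematical content of the proposition. The claim that $\widehat{k}^V\leq 2k$ on the good event is not a routine consequence of ``the penalty growing fast enough'': the criterion penalizes $\log\|{\bf Y}-{\bf X}\widehat{\theta}_j\|_n^2$, and in the ultra-high dimensional regime ($j\log(ep/j)\gtrsim n$, which is reachable here since $j$ may go up to $\lfloor(n-1)/4\rfloor$ and $p\geq 2n$) the residual of a $j$-sparse least-squares fit can be \emph{exponentially} small, so the log-residual can drop below $\log\|\eps\|_n^2$ by an amount of order $(j+k)\log\left(ep/(j+k)\right)/n$. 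Ruling out the selection of such $j>2k$ therefore requires a uniform \emph{lower} bound on $\min_{|m|=j}\|(I-\Pi_{m\cup\supp(\theta_0)})\eps\|_n^2$ over the $\binom{p}{j+k}$ candidate supports --- this is exactly what the small-deviation $\chi^2$ inequality (\ref{minoration_chi2_petite_proba}) is designed for --- and one must then check that the penalty difference $\pen(j)-\pen(k)$, with $K$ a large enough numerical constant, dominates this possible drop for \emph{every} $j>2k$, with exceptional probabilities summing to at most $e^{-n}+C/p$. None of this appears in your proposal; the heuristic ``the penalty must rule out larger models'' assumes away the only delicate point, namely why the drop in log-residual cannot beat the penalty.

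A secondary, more minor issue: you justify the high-probability prediction bound by saying that the expectation bound (\ref{inegalite_oracle_baraud}) is obtained by integrating such a deviation inequality. That is the right direction of reasoning (one cannot pass from an expectation bound to a $1-e^{-n}-C/p$ probability bound via Markov), but it appeals to the internal structure of a proof you have not reproduced, and the deviation event there must moreover be intersected with the event on which $\widehat{k}^V\leq 2k$ holds, so that both controls are valid simultaneously. In short, the skeleton is the correct one and matches the paper, but as written the proposal defers both load-bearing steps.
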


\begin{remark}
 Although the bound (\ref{eq_prte_majoration_probleme_inverse_adaptation}) is in probability and not in expectation, it  suggests that adaptation to the sparsity and to the variance are possible.
\end{remark}

\subsubsection{Random design}

Let us turn to the Gaussian design case. We are interested in bounding  $\mathcal{RI}_R[k,\Sigma]$ and $\mathcal{RI}_R[k]$ as defined in (\ref{defi_risque_minimax_inverse_random}).

\begin{prte}\label{prte_minimax_inverse_random}
For any $1\leq k\leq (n-1)/4$, and any covariance $\Sigma$ we have
\begin{equation}\label{minoration_inverse_design_random_rask}
 C_1\left[
\frac{1}{n\varPhi_{2k\wedge p,-}(\sqrt{\Sigma})}\vee
\frac{k\log(ep/k)}{n\varPhi_{1,+}(\sqrt{\Sigma})}\right]\leq \mathcal{RI}_R[k,\Sigma]\leq 
C_2
\frac{k\log\left(ep/k\right)}{n\varPhi_{2k\wedge p,-}(\sqrt{\Sigma})}\exp\left[C_3\frac{k}{n}\log\left(\frac{ep}{k}\right)\right]\ .
\end{equation}
As long as $k[1\log(p/k)]\leq n$, we derive that  $\mathcal{RI}_R[k]:= \inf_{\Sigma \in \mathcal{S}_{p}}\mathcal{RI}_R[k,\Sigma]$ satisfies 
\begin{equation}\label{eq_risque_minimax_inverse_random}
 C_1\frac{k}{n}\log\left(\frac{ep}{k}\right)\leq \mathcal{RI}_R[k]\leq C_2\frac{k}{n}\log\left(\frac{ep}{k}\right)\ .
\end{equation}
\end{prte}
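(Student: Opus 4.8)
The statement splits into the design-dependent envelope (\ref{minoration_inverse_design_random_rask}), valid for every covariance $\Sigma$, and its specialization (\ref{eq_risque_minimax_inverse_random}) to the most favorable $\Sigma\in\mathcal{S}_p$ in the regime $k[1+\log(p/k)]\leq n$. The plan is to obtain (\ref{minoration_inverse_design_random_rask}) by mirroring the fixed-design arguments with the empirical Gram matrix ${\bf X}^T{\bf X}$ replaced by its expectation $n\Sigma$, and then to read off (\ref{eq_risque_minimax_inverse_random}) by an explicit choice of covariance. For the lower bound in (\ref{minoration_inverse_design_random_rask}), the key observation is that for a Gaussian design the Kullback--Leibler divergence between the joint laws $\mathbb{P}_{\theta_1,\sigma}$ and $\mathbb{P}_{\theta_2,\sigma}$ of $({\bf Y},{\bf X})$ equals $\tfrac{n}{2\sigma^2}\|\sqrt{\Sigma}(\theta_1-\theta_2)\|_p^2$, a purely population quantity, so that the proof of Proposition \ref{prte_minoration_inverse_fixe} carries over with every $\varPhi_{\cdot,\pm}({\bf X})$ replaced by $n\,\varPhi_{\cdot,\pm}(\sqrt{\Sigma})$. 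Concretely, the first term $\tfrac{1}{n\varPhi_{2k\wedge p,-}(\sqrt{\Sigma})}$ is produced by a two-point (Le Cam) argument: choosing $\theta_1,\theta_2\in\Theta[k,p]$ whose difference is the $2k$-sparse direction realizing $\varPhi_{2k\wedge p,-}(\sqrt{\Sigma})$ and calibrating $\|\sqrt{\Sigma}(\theta_1-\theta_2)\|_p^2$ of order $\sigma^2/n$ makes the two laws indistinguishable while $\|\theta_1-\theta_2\|_p^2$ is of order $\sigma^2/[n\varPhi_{2k\wedge p,-}(\sqrt{\Sigma})]$. The second term $\tfrac{k\log(ep/k)}{n\varPhi_{1,+}(\sqrt{\Sigma})}$ comes from a Fano argument over a Varshamov--Gilbert packing of the supports, endowed with independent random signs so that the averaged divergence only sees the diagonal of $\Sigma$, i.e. $\varPhi_{1,+}(\sqrt{\Sigma})=\max_i\Sigma_{ii}$.

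For the upper bound I would use the estimator $\widetilde{\theta}^V$ and start from its conditional, fixed-design guarantee (\ref{eq_prte_majoration_probleme_inverse_adaptation}) of Proposition \ref{prte_majoration_probleme_inverse_adaptation}, which holds for any realization of ${\bf X}$ on an event of probability at least $1-e^{-n}-C/p$ and bounds $\|\widetilde{\theta}^V-\theta_0\|_p^2/\sigma^2$ by a multiple of $\tfrac{k}{\varPhi_{3k,-}({\bf X})}\log(ep/k)\exp[C\tfrac{k}{n}\log(ep/k)]$. Integrating this over the random design and converting the high-probability statement into the expectation defining $\mathcal{RI}_R[k,\Sigma]$ requires controlling $\mathbb{E}[\varPhi_{3k,-}^{-1}({\bf X})]$ on the relevant event together with the contribution of the complementary event. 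This is where the deviation inequality for the spectrum of Wishart matrices (Lemma \ref{lemma_concentration_vp_wishart}) enters: the smallest restricted eigenvalue of order $3k$ is an infimum over the $\binom{p}{3k}$ sparse subspaces, so its lower tail carries a union-bound cost of order $k\log(ep/k)$, which is exactly what degrades the typical value $n\,\varPhi_{2k\wedge p,-}(\sqrt{\Sigma})$ into $n\,\varPhi_{2k\wedge p,-}(\sqrt{\Sigma})\exp[-C\tfrac{k}{n}\log(ep/k)]$ and hence yields the announced exponential blow-up. I expect this passage --- turning the conditional bound in terms of the random quantity $\varPhi_{3k,-}({\bf X})$ into an expectation expressed through the population $\varPhi_{2k\wedge p,-}(\sqrt{\Sigma})$ with the sharp exponential factor, while keeping the bad-event remainder negligible --- to be the main obstacle.

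Finally, (\ref{eq_risque_minimax_inverse_random}) follows from (\ref{minoration_inverse_design_random_rask}) once one checks the two special features of $\mathcal{S}_p$. For the upper bound, take $\Sigma=I_p\in\mathcal{S}_p$, so that $\varPhi_{2k\wedge p,-}(\sqrt{\Sigma})=1$; since $k[1+\log(p/k)]\leq n$ is equivalent to $\tfrac{k}{n}\log(ep/k)\leq C$, the exponential factor in (\ref{minoration_inverse_design_random_rask}) is bounded by a constant, leaving $\mathcal{RI}_R[k]\leq \mathcal{RI}_R[k,I_p]\leq C\tfrac{k}{n}\log(ep/k)$. For the lower bound, observe that every $\Sigma\in\mathcal{S}_p$ has unit diagonal, hence $\varPhi_{1,+}(\sqrt{\Sigma})=\max_i\Sigma_{ii}=1$; the second term of (\ref{minoration_inverse_design_random_rask}) then gives $\mathcal{RI}_R[k,\Sigma]\geq C\tfrac{k}{n}\log(ep/k)$ uniformly in $\Sigma\in\mathcal{S}_p$, and taking the infimum over $\Sigma$ preserves this bound.
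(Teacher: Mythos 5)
Your handling of the lower bound in (\ref{minoration_inverse_design_random_rask}) and of the specialization (\ref{eq_risque_minimax_inverse_random}) coincides with the paper's proof: the paper transfers the fixed-design argument of Proposition \ref{prte_minoration_inverse_fixe} verbatim (a two-point argument for the term $1/[n\varPhi_{2k\wedge p,-}(\sqrt{\Sigma})]$, Fano's lemma over the packing $\mathcal{M}'(k,p)$ for the term in $\varPhi_{1,+}$), and your remark that the Kullback divergence between the joint laws of $({\bf Y},{\bf X})$ is the population quantity $\tfrac{n}{2\sigma^2}\|\sqrt{\Sigma}(\theta_1-\theta_2)\|_p^2$ is exactly what licenses this transfer; likewise the paper obtains (\ref{eq_risque_minimax_inverse_random}) from $\varPhi_{1,+}(\sqrt{\Sigma})=1$ on $\mathcal{S}_p$ and from taking $\Sigma=I_p$ in the regime $k[1+\log(p/k)]\leq n$. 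One small repair to your wording: Lemma \ref{lemme_fano} requires a bound on the \emph{maximal} pairwise divergence, not an averaged one, so the signs must be fixed support by support (averaging over signs shows, for each $m$ separately, that some sign pattern gives $\|\sqrt{\Sigma}\theta_m\|_p^2\leq\varPhi_{1,+}(\sqrt{\Sigma})$; the paper constructs such signs by induction), after which every pairwise divergence is controlled by $2nr^2\varPhi_{1,+}(\sqrt{\Sigma})/\sigma^2$.

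The genuine gap is in the upper bound of (\ref{minoration_inverse_design_random_rask}), which you sketch but, as you concede, do not close (the paper relegates it to Appendix B of \cite{technical}). First, $\mathcal{RI}_R[k,\Sigma]$ is a risk in \emph{expectation}, whereas Propositions \ref{prte_majoration_probleme_inverse_adaptation} and \ref{prte_majoration_probleme_inverse_adaptation_random} are statements in probability: on the exceptional event of probability $e^{-n}+C/p$ the loss of $\widetilde{\theta}^V$ is not controlled at all, and a $C/p$ probability times an unbounded loss cannot simply be declared negligible. The repair is to avoid a single high-probability statement: condition on ${\bf X}$, apply the fixed-design expectation bound (\ref{minoration_inverse_design_fixe_rask}) for the constrained least-squares estimator $\widehat{\theta}_k$ of (\ref{definition_moindre_carre_sparse}) (valid for every realization of ${\bf X}$), and then bound $\mathbb{E}_{\bf X}\bigl[\varPhi^{-1}_{2k\wedge p,-}({\bf X})\bigr]$ by writing ${\bf X}={\bf Z}\sqrt{\Sigma}$, lower bounding $\varPhi_{2k,-}({\bf X})\geq \varPhi_{2k,-}(\sqrt{\Sigma})\inf_{m}\varphi_{\min}(W_m)$ over the $\binom{p}{2k}$ standard Wishart matrices $W_m$ attached to the subspaces $\sqrt{\Sigma}\,\mathrm{span}(e_i,\,i\in m)$, and \emph{integrating} the polynomial tail (\ref{majoration_wishart_queue_distribution}) of Lemma \ref{lemma_concentration_vp_wishart}; it is this integration, with the union-bound cost $\binom{p}{2k}$ raised to the power $2/(n-4k)$, that yields $\mathbb{E}_{\bf X}[\varPhi^{-1}_{2k,-}({\bf X})]\leq [n\varPhi_{2k,-}(\sqrt{\Sigma})]^{-1}\exp[C_3\tfrac{k}{n}\log(\tfrac{ep}{k})]$ with a finite remainder. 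Second, even on the good event your route through $\widetilde{\theta}^V$ gives $\varPhi_{3k,-}(\sqrt{\Sigma})$ in the denominator, and since $\varPhi_{3k,-}(\sqrt{\Sigma})\leq \varPhi_{2k\wedge p,-}(\sqrt{\Sigma})$ this is strictly weaker than the claimed bound; for a minimax upper bound you may assume $k$ and $\sigma^2$ known, so there is no reason to pay for the adaptivity of $\widetilde{\theta}^V$, and it is the $k$-sparse estimator $\widehat{\theta}_k$, whose error $\widehat{\theta}_k-\theta_0$ is $2k$-sparse, that produces the stated $\varPhi_{2k\wedge p,-}$.
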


We observe that  $\mathcal{RI}_R[k]$ and 
$\mathcal{RI}_F[k]$ behave similarly in a non-ultra-high dimensional setting.

\begin{remark}{\bf [Ultra-high dimensional case]}
 Proposition \ref{prte_minimax_inverse_random} does not allow to derive the order of magnitude of $\mathcal{RI}_R[k]$ in an ultra-high dimensional setting. While the upper bound in (\ref{minoration_inverse_design_random_rask}) is blowing up, the lower bound remains as small as $k\log(p/k)/n$. Nevertheless, we know from Proposition \ref{prte_prediction_minoration_minimax} that 
\begin{equation*}
 \mathcal{RI}_R[k,I_p]=\mathcal{R}_R[k,I_p]\geq  C_1
\frac{k\log\left(ep/k\right)}{n}\exp\left[C_2\frac{k}{n}\log\left(\frac{ep}{k}\right)\right]\ . 
\end{equation*}
This suggests that $\mathcal{RI}_R[k]$ is blowing up in an ultra-high dimensional setting but the problem remains open.
\end{remark}

In the next proposition, we state the counterpart of Proposition \ref{prte_majoration_probleme_inverse_adaptation} in the random design case.
\begin{prte}[Adaptation to the sparsity and the variance]\label{prte_majoration_probleme_inverse_adaptation_random} As in the prediction case, we consider the estimator $\widetilde{\theta}^V$ (defined in Section \ref{section_theta_V}). Assume that $p\geq 2n$. For any covariance $\Sigma$, any $\sigma>0$, any $1\leq k\leq \lfloor (n-1)/12\rfloor$, and any $\theta_0\in\Theta[k,p]$, we have  
\begin{equation}\label{eq_prte_majoration_probleme_inverse_adaptation_random}
 \frac{\|\widetilde{\theta}^V-\theta_0\|_p^2}{\sigma^2}\leq C_1 \frac{k}{n\varPhi_{3k,-}(\sqrt{\Sigma})}\log\left(\frac{ep}{k}\right)\exp\left[C_2\frac{k}{n}\log\left(\frac{ep}{k}\right)\right]\ ,
\end{equation}
with probability larger than $1-e^{-n}-C/p$.
\end{prte}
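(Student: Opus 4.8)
The plan is to follow the proof of the fixed-design counterpart, Proposition \ref{prte_majoration_probleme_inverse_adaptation}, replacing the empirical restricted eigenvalue $\varPhi_{3k,-}({\bf X})$ by its population analogue $n\,\varPhi_{3k,-}(\sqrt{\Sigma})$. The deterministic backbone is the very definition of $\varPhi_{3k,-}(\sqrt{\Sigma})$: whenever $\widetilde\theta^V-\theta_0$ is supported on at most $3k$ coordinates,
\[
\|\widetilde\theta^V-\theta_0\|_p^2\leq \frac{\|\sqrt{\Sigma}(\widetilde\theta^V-\theta_0)\|_p^2}{\varPhi_{3k,-}(\sqrt{\Sigma})}\ .
\]
Hence it suffices to exhibit a single event of probability at least $1-e^{-n}-C/p$ on which (i) the support of $\widetilde\theta^V$ has cardinality at most $2k$, and (ii) the integrated prediction loss obeys the high-probability analogue of the expectation bound of Theorem \ref{thrm_prediction_majoration_minimax}, namely
\[
\frac{\|\sqrt{\Sigma}(\widetilde\theta^V-\theta_0)\|_p^2}{\sigma^2}\leq C_1\frac{k}{n}\log\left(\frac{ep}{k}\right)\exp\left[C_2\frac{k}{n}\log\left(\frac{ep}{k}\right)\right]\ .
\]
Since $\theta_0$ is $k$-sparse and $|\supp(\widetilde\theta^V)|\leq 2k$, the difference $\widetilde\theta^V-\theta_0$ is then supported on a set of cardinality at most $3k$, and combining the two displays yields exactly (\ref{eq_prte_majoration_probleme_inverse_adaptation_random}).

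To obtain (i) and (ii) I would revisit the analysis of $\widetilde\theta^V$ underlying Theorem \ref{thrm_prediction_majoration_minimax} and keep it at the level of a high-probability event rather than integrating the tails. The control (i) on the selected dimension comes from the complexity penalty defining $\widetilde\theta^V$: selecting a model much larger than $\supp(\theta_0)$ would incur a penalty that the penalized criterion cannot afford on the good event. The bound (ii) is produced by feeding the Wishart-spectrum deviation inequality (Lemma \ref{lemma_concentration_vp_wishart}) into the conversion of the empirical prediction loss $\|{\bf X}(\widetilde\theta^V-\theta_0)\|_n^2/n$ on the random selected model into the population loss $\|\sqrt{\Sigma}(\widetilde\theta^V-\theta_0)\|_p^2$; the exponential factor is precisely the price of this conversion over the worst model of size at most $3k$. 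This is where the restriction $k\leq\lfloor(n-1)/12\rfloor$ enters, since Lemma \ref{lemma_concentration_vp_wishart} requires the model dimension $3k$ to remain below a fixed fraction of $n$, i.e. $3k\leq (n-1)/4$. The two probabilistic budgets, $e^{-n}$ (Gaussian concentration of the design) and $C/p$ (union bound over models together with the noise tails), are inherited from this analysis.

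The main obstacle is establishing (ii) in probability simultaneously with the sparsity control (i); the expectation statement of Theorem \ref{thrm_prediction_majoration_minimax} does not suffice on its own. It is worth noting why the seemingly more direct route---conditioning on ${\bf X}$, invoking Proposition \ref{prte_majoration_probleme_inverse_adaptation} to get a factor $1/\varPhi_{3k,-}({\bf X})$, and then comparing $\varPhi_{3k,-}({\bf X})$ with $n\,\varPhi_{3k,-}(\sqrt{\Sigma})$---is delicate in the ultra-high dimensional regime. A uniform comparison of the empirical and population restricted eigenvalues over the $\binom{p}{3k}$ supports costs a deviation term of order $\sqrt{k\log(ep/k)}$, which exceeds $\sqrt{n}$ exactly when $k\log(ep/k)\gg n$; consistently with (\ref{minoration_valeur_propre_restreinte_design_fixe}), $\varPhi_{3k,-}({\bf X})$ is then itself far smaller than $n\,\varPhi_{3k,-}(\sqrt{\Sigma})$, so this route would not return the correct exponential constant. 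Working with the population norm $\|\sqrt{\Sigma}\cdot\|_p$ from the outset sidesteps the difficulty, since the unavoidable deviation has already been absorbed, once and for all, into the exponential factor of the prediction bound.
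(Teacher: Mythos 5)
Your plan is sound, and it is consistent with the structure the paper's own argument (deferred to Appendix B of \cite{technical}) has to have: on a single event of probability $1-e^{-n}-C/p$ one controls the selected dimension ($\widehat{k}^V\leq 2k$, so that $\widetilde{\theta}^V-\theta_0$ is $3k$-sparse) together with an in-probability prediction bound, and the purely deterministic inequality $\|\widetilde{\theta}^V-\theta_0\|_p^2\leq \|\sqrt{\Sigma}(\widetilde{\theta}^V-\theta_0)\|_p^2/\varPhi_{3k,-}(\sqrt{\Sigma})$ then gives (\ref{eq_prte_majoration_probleme_inverse_adaptation_random}). Your reading of the constraint $k\leq\lfloor(n-1)/12\rfloor$ is exactly right: it is the condition $n\geq 4(3k)+1$ that inequality (\ref{majoration_wishart_queue_distribution}) of Lemma \ref{lemma_concentration_vp_wishart} requires for Wishart matrices of dimension $3k$. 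Be aware, however, that your items (i) and (ii) are not minor variants of Theorem \ref{thrm_prediction_majoration_minimax}: they are the entire technical content of the proposition (a uniform control, over all models of size $k'>2k$, of the decrease of the residual sum of squares against the penalty increment for (i); a union bound over all $3k$-sparse supports fed into the Wishart lower tail for (ii)), and your proposal describes them rather than proves them.

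Where you actually go astray is the closing paragraph. The ``direct'' route---conditioning on ${\bf X}$, invoking Proposition \ref{prte_majoration_probleme_inverse_adaptation}, and then comparing $\varPhi_{3k,-}({\bf X})$ with $n\varPhi_{3k,-}(\sqrt{\Sigma})$---is not blocked. Your objection only rules out a \emph{constant-factor} comparison based on the subgaussian deviation (\ref{majoration_wishart_sous_gaussienne}); the heavy-tailed bound (\ref{majoration_wishart_queue_distribution}) is designed for precisely the ultra-high dimensional regime. Writing ${\bf X}_m={\bf W}_m\Sigma_m^{1/2}$ with ${\bf W}$ standard Gaussian, so that $\varphi_{\min}({\bf X}_m^T{\bf X}_m)\geq \varphi_{\min}({\bf W}_m^T{\bf W}_m)\,\varphi_{\min}(\Sigma_m)$, and taking a union bound over the $\binom{p}{3k}$ supports (with $n-6k\geq n/2$ since $k\leq (n-1)/12$), one obtains, with probability at least $1-C/p$, that $\varPhi_{3k,-}({\bf X})\geq C_1\, n\,\varPhi_{3k,-}(\sqrt{\Sigma})\exp\left[-C_2\frac{k}{n}\log\left(\frac{ep}{k}\right)\right]$. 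Plugged into the fixed-design proposition, this yields (\ref{eq_prte_majoration_probleme_inverse_adaptation_random}) with different, but still universal, constants---and the constant $C_2$ in the statement is not pinned down, so that is all the statement claims. The paper itself uses exactly this device in the main text, in the proof of the third part of Proposition \ref{cor_minoration_inverse_dimension_pas_trop_grande}, where (\ref{majoration_wishart_queue_distribution}) serves to lower bound an empirical restricted eigenvalue at exponential cost; the upper bound (\ref{minoration_valeur_propre_restreinte_design_fixe}) shows this cost is unavoidable, not that the route fails. Note finally that your own step (ii) performs the identical empirical-to-population conversion over $3k$-sparse supports, so nothing is ``sidestepped'': the two routes consist of the same chain of inequalities grouped differently, and both pay the same exponential factor.
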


\subsection{Consequences on support estimation}

We deduce from the minimax lower bounds for the inverse problem $(\bf{P_3})$
some consequences for the support estimation problem $(\bf{P_4})$ in a
ultra-high dimensional setting. The case $k[1+\log(p/k)]$ small compared to $n$ has been studied in Wainwright \cite{wain_minimax2}.

\begin{defi}\label{definition_hypercube}
 For any $\rho>0$ and any $k\leq p$, the set
$\mathcal{C}_k^p(\rho)$ is made of all vectors $\theta$ in $\Theta[k,p]$ such that
$\theta$
contains exactly $k$ non-zero coefficients that are all equal to 
$\rho/\sqrt{k}$.
\end{defi}

In a non-ultra high dimensional setting, Wainwright \cite{wain_minimax} has proved, that under suitable conditions on a design ${\bf X}\in \mathcal{D}_{n,p}$, it is possible to recover the support of any vector $\theta_0$ that belong to $\mathcal{C}_k^p(\rho)$ with $\rho$ of order of $\sqrt{k\log(p)/n}\sigma$. Here, we prove that $\rho$ has to be much larger in an ultra-high dimensional setting.

\begin{prte}{\bf [Support recovery
is almost impossible]}\label{prte_minoration_support_estimation}
For any $\rho^2\leq C_1/n\left(\frac{ep}{k}\right)^{C_2k/n}$ and any $k\leq n\wedge p/2$, we have 
\begin{eqnarray*}
 \inf_{{\bf
X}\in\mathcal{D}_{n,p}}\inf_{\hat{m}}\sup_{\theta_0\in\mathcal{C}
_k^p(\rho)}\mathbb{P}_{\theta_0,1}\left[\hat{m}\neq
\mathrm{supp}(\theta_0)\right]\geq 1/(2e+1)\ .
\end{eqnarray*}
\end{prte}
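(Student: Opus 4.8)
The plan is to recast support recovery as a multiple-hypothesis identification problem and apply Fano's lemma, the entire difficulty being to control the information distance between hypotheses \emph{uniformly over every} design $\mathbf{X}\in\mathcal{D}_{n,p}$. Each subset $m\in\mathcal{M}(k,p)$ gives a candidate $\theta_m:=\tfrac{\rho}{\sqrt{k}}\mathbf{1}_m\in\mathcal{C}_k^p(\rho)$, and recovering $\supp(\theta_0)$ is exactly the problem of identifying the index $m$ from $\mathbf{Y}\sim\mathcal{N}(\mathbf{X}\theta_m,I_n)$. Since $\sigma=1$, the Kullback--Leibler divergence between two such models is $\tfrac12\|\mathbf{X}\theta_m-\mathbf{X}\theta_{m'}\|_n^2$, i.e. half the squared Euclidean distance between the image points $v_m:=\mathbf{X}\theta_m\in\R^n$. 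Thus Fano's lemma (as in~\cite{yu}) yields, for any sub-family $\mathcal{S}\subseteq\mathcal{M}(k,p)$,
$$
\inf_{\hat m}\sup_{\theta_0\in\mathcal{C}_k^p(\rho)}\P_{\theta_0,1}[\hat m\neq\supp(\theta_0)]\ \geq\ 1-\frac{\bar I+\log 2}{\log|\mathcal{S}|},
$$
where $\bar I$ bounds the average pairwise divergence over $\mathcal{S}$. It therefore suffices to exhibit a sub-family that is simultaneously large ($\log|\mathcal{S}|\gtrsim k\log(p/k)$) and tightly clustered ($\bar I$ of constant order).

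The key geometric observation, and the reason the bound is insensitive to $\mathbf{X}$, is that all $M:=\binom{p}{k}$ points $v_m$ lie in a Euclidean ball of controlled radius. Indeed, since every column satisfies $\|\mathbf{X}_i\|_n=\sqrt{n}$,
$$
\|v_m\|_n=\frac{\rho}{\sqrt{k}}\Big\|\sum_{i\in m}\mathbf{X}_i\Big\|_n\leq \frac{\rho}{\sqrt{k}}\sum_{i\in m}\|\mathbf{X}_i\|_n=\rho\sqrt{kn}=:R,
$$
which uses only the normalization defining $\mathcal{D}_{n,p}$. I would then run a pigeonhole argument: the ball $B(0,R)\subset\R^n$ is covered by at most $(3R/r)^n$ balls of radius $r$, so one of them contains a sub-family $\mathcal{S}$ with $|\mathcal{S}|\geq M\,(r/3R)^n$, whence $\log|\mathcal{S}|\geq k\log(p/k)-n\log(3R/r)$, while any two of its points are within $2r$, giving $\bar I\leq 2r^2$. (When $R$ is itself of constant order one simply keeps $\mathcal{S}=\mathcal{M}(k,p)$ and uses the crude diameter bound $\bar I\leq 2R^2$.)

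It remains to choose $r$ and to verify that the hypothesis $\rho^2\leq \tfrac{C_1}{n}(ep/k)^{C_2k/n}$ is precisely what keeps $n\log(3R/r)$ below a fixed fraction of $\log M\geq k\log(p/k)$. Writing $R=\rho\sqrt{kn}$ and taking $r$ a well-chosen constant, the bound on $\rho^2$ forces $n\log(3R/r)$ to be at most, say, half of $k\log(p/k)$, so that $\log|\mathcal{S}|$ stays of order $k\log(p/k)$ while $\bar I$ remains bounded. Feeding these two estimates into Fano and optimizing the numerical constants produces the stated threshold $1/(2e+1)$. The delicate point, which dictates the exact form of the condition on $\rho^2$, is this trade-off between the cardinality of the cluster and its diameter.

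The main obstacle is thus not the application of Fano but the \emph{design-uniform} control of the divergences: for an arbitrary $\mathbf{X}$ one cannot align differences $\theta_m-\theta_{m'}$ with low restricted-eigenvalue directions, so the naive bound $\|\mathbf{X}(\theta_m-\theta_{m'})\|_n^2\leq\varPhi_{2k,+}(\mathbf{X})\|\theta_m-\theta_{m'}\|_p^2$ is useless. The packing/pigeonhole step circumvents this entirely by working directly with the crowding of $\binom{p}{k}$ points in the $n$-dimensional ball. This is exactly the volumetric phenomenon underlying the restricted-eigenvalue bound (\ref{minoration_valeur_propre_restreinte_design_fixe}); in particular the threshold $\rho^2\lesssim \tfrac1n(ep/k)^{C_2k/n}$ is, uniformly in $\mathbf{X}$, of the order of $1/\varPhi_{2k,-}(\mathbf{X})$.
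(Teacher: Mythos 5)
Your geometric idea is the right one, and it is essentially the paper's: the paper also exploits the fact that $\binom{p}{k}$ image points must crowd together in $\R^n$ for \emph{any} design (this is how it proves the restricted-eigenvalue bound (\ref{minoration_valeur_propre_restreinte_design_fixe})), and then it applies a Fano-type argument. But two of your quantitative steps fail as written.

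First, the version of Fano you invoke, $\text{error}\geq 1-(\bar I+\log 2)/\log|\mathcal{S}|$, can never produce a positive bound when $\log|\mathcal{S}|$ is of constant order, and the proposition must hold for \emph{all} $k\leq n\wedge p/2$ — including, say, $k=1$, $p=2$, where $\mathcal{C}_k^p(\rho)$ contains exactly two points, so $\log|\mathcal{S}|\leq\log 2$ and your right-hand side is $\leq -\bar I/\log 2\leq 0$ no matter how small $C_1,C_2$ are. The constant $1/(2e+1)$ in the statement is the signature of Birg\'e's version of Fano's lemma (Lemma \ref{lemme_fano} in the paper, i.e.\ Corollary 2.18 in \cite{massartflour}): if all pairwise Kullback divergences are at most $\kappa\log|\mathcal{S}|$ with $\kappa=2e/(2e+1)$, the identification error is at least $1-\kappa$, \emph{even with only two hypotheses}. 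That is the tool you need; the paper itself uses it after reducing to just two confusable supports.

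Second, the parameter choice ``$r$ a well-chosen constant'' does not deliver the claimed inequality $n\log(3R/r)\leq \tfrac12 k\log(p/k)$. Since $R^2=\rho^2kn\leq C_1k\,(ep/k)^{C_2k/n}$, you get
\begin{equation*}
n\log\left(\frac{3R}{r}\right)\;=\;\frac{n}{2}\log\left(\frac{9C_1k}{r^2}\right)+\frac{C_2}{2}\,k\log\left(\frac{ep}{k}\right)\ ,
\end{equation*}
and with $r$ constant the first term is of order $n\log k$, which can be arbitrarily larger than $k\log(p/k)$ (take $k$ a large constant, $p=2k$, $n\to\infty$); your fallback case ``$R$ of constant order'' does not cover this regime either, since there $R^2\asymp C_1k$. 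The fix is to take $r^2\asymp C_1k$, which makes the first term vanish; but then the cluster has Kullback diameter of order $C_1k$, not of constant order, so your claim ``$\bar I$ remains bounded'' must be abandoned: both sides of the Fano comparison scale linearly in $k$ ($\bar I\lesssim C_1k$ versus $\kappa\log|\mathcal{S}|\gtrsim \kappa\,k$), and the conclusion follows from Birg\'e's lemma only because $C_1$ may be chosen small. With these two repairs (Birg\'e's Fano in place of classical Fano, and $r\asymp\sqrt{C_1k}$ in place of $r$ constant) your covering/pigeonhole route does go through, and it is a legitimate alternative to the paper's argument, which instead extracts from the volumetric constraint a single pair of vectors with Hamming-separated supports and nearly coincident images and concludes with a two-point bound; your variant even has the merit of keeping all hypotheses inside $\mathcal{C}_k^p(\rho)$ (all non-zero entries positive), whereas the paper's pair carries $\pm$ signs.
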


For any design ${\bf
X}\in\mathcal{D}_{n,p}$
it is not possible to recover the support of $\theta_0$ with high probability,
unless $\theta_0$ satisfies:
$$\frac{\|\theta_0\|_p^2}{\sigma^2}\geq C_1/n \left(\frac{p}{k}\right)^{C_2k/n}\
 .$$ 
This quantity is blowing up in an ultra-high dimensional setting and it can be much larger than the usual $k\log(p)/n$ that can be achieved in a non-ultra high dimensional setting. \\

As it is almost impossible to estimate the support of $\theta_0$ in an ultra-high
dimensional setting, we may aim to an easier objective. Can we choose a
subset $\widehat{M}$ of $\{1,\ldots, p\}$ of size $p_0\leq p$ that contains the
support of $\theta_0$
with	
high probability? This would allow to reduce the dimension of the problem from
$p$ to $p_0$. Dimension reductions techniques are popular for
analyzing high dimensional problems.
We study here to what extent dimension reduction is a realistic objective:
 how large should be the non-zero components of
$\theta_0$? How small can we choose $p_0$?

\begin{prte}\label{prte_minoratoin_subset_estimation}
Consider a Gaussian design regression with  $\Sigma=I_p$
and 
$\sigma^2=1$.
We assume that $p\geq k^3\vee C$ and $n\geq C$. Set 
$$\rho^2=
C\frac{k}{n}\log\left(\frac{ep}{k}\right)\exp\left[C_2\frac{k}{n}\log\left(\frac{ep}{k} \right)\right]\ .$$
There exists a universal constant $0<\delta<1$ such that for any measurable
subset
$\widehat{M}$ of $\{1,\ldots,p\}$ of size
$p_0\leq p^\delta$, we have
\begin{eqnarray}\label{equation_reduction}
\sup_{\theta_0\in
\mathcal{C}_k^p(\rho)}\mathbb{P}_{\theta_0,1}\left[\mathrm{supp}
(\theta_0) \nsubseteq \widehat{M}
\right]\geq 1/8\ .
\end{eqnarray}
\end{prte}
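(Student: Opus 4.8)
The idea is to lower bound the supremum by a Bayes error over a uniform prior on the support and then invoke a list (generalised) version of Fano's inequality. The whole difficulty is concentrated in controlling the mutual information between the support and the data $({\bf Y},{\bf X})$ in a way that does \emph{not} blow up with the exponentially large signal $\rho^2$.

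First I would endow $\mathcal{C}_k^p(\rho)$ with the uniform prior: draw a support $S$ uniformly among the $\binom{p}{k}$ subsets of size $k$ and set $\theta_S=(\rho/\sqrt{k})\1_S$. Then $\sup_{\theta_0\in\mathcal{C}_k^p(\rho)}\mathbb{P}_{\theta_0,1}[\supp(\theta_0)\nsubseteq\widehat{M}]\geq \mathbb{P}[S\nsubseteq\widehat{M}]$ under the resulting mixture. Since $\widehat{M}=\widehat{M}({\bf Y},{\bf X})$ is a measurable function of the data $D=({\bf Y},{\bf X})$, the family $L:=\{S'\in\mathcal{M}(k,p):\,S'\subseteq\widehat{M}\}$ is a data-measurable \emph{list} of candidate supports, of cardinality $|L|=\binom{|\widehat{M}|}{k}\leq\binom{\lfloor p^{\delta}\rfloor}{k}=:\ell$, and $\{\supp(\theta_0)\subseteq\widehat{M}\}=\{S\in L\}$. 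If $|\widehat{M}|<k$ the list is empty and the bound is trivial, so I assume $\ell\geq 1$. Writing $N=\binom{p}{k}$, a conditioning argument on the event $\{S\in L\}$ along the Markov chain $S\to D\to L$ yields the list-Fano inequality $\mathbb{P}[S\notin L]\geq 1-(I(S;D)+\log 2)/\log(N/\ell)$; hence it suffices to prove $I(S;D)+\log 2\leq \tfrac{7}{8}\log(N/\ell)$.

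The crux is the mutual information bound. Because $\Sigma=I_p$, the matrix ${\bf X}$ has i.i.d. standard Gaussian entries and is independent of $S$, so $I(S;{\bf Y},{\bf X})=I(S;{\bf Y}\mid{\bf X})$. Conditionally on ${\bf X}={\bf x}$, the vector ${\bf Y}\sim\mathcal{N}((\rho/\sqrt{k}){\bf x}\1_S,I_n)$ is a Gaussian location model, and the maximum-entropy bound gives
\[
I(S;{\bf Y}\mid{\bf X}={\bf x})\leq \tfrac12\log\det\big(I_n+\Gamma({\bf x})\big),\qquad \Gamma({\bf x})=\tfrac{\rho^2}{k}\,{\bf x}\,\mathrm{cov}(\1_S)\,{\bf x}^T.
\]
By concavity of $\log\det$ (Jensen), $\tfrac12\log\det(I_n+\Gamma)\leq \tfrac n2\log(1+\tfrac1n\mathrm{tr}\,\Gamma)$, and since $\mathbb{E}[{\bf X}^T{\bf X}]=nI_p$ we get $\mathbb{E}_{\bf X}\,\mathrm{tr}\,\Gamma=\tfrac{\rho^2}{k}\,\mathrm{tr}(\mathrm{cov}(\1_S)\,nI_p)=n\rho^2(1-k/p)\leq n\rho^2$. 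A further application of Jensen over ${\bf X}$ then yields $I(S;{\bf Y},{\bf X})\leq \tfrac n2\log(1+\rho^2)$. This is the key point, and the main obstacle: the naive pairwise-KL bound would only give $I\lesssim n\rho^2$, which is useless, whereas the Gaussian channel capacity grows merely like $\log\rho^2$. In other words, an arbitrarily strong but $k$-spread signal cannot be localised — the correlation of ${\bf Y}$ with each relevant column stays of order $1/\sqrt{k}$ however large $\rho$ is, so the exponentially large $\rho^2$ buys only $\log\rho^2$ worth of information.

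Finally I would plug in $\rho^2=C\frac{k}{n}\log(ep/k)\exp[C_2\frac{k}{n}\log(ep/k)]$. Since $\log(1+\rho^2)\lesssim \rho^2$ when the exponent is at most $1$ and $\log(1+\rho^2)\leq C'\frac{k}{n}\log(ep/k)$ otherwise, one gets in all regimes $I(S;D)\leq \tfrac{C''}{2}\,k\log(ep/k)$ with $C''$ depending only on $C,C_2$. On the other side, using $\binom{p}{k}\geq(p/k)^k$ and $\binom{\lfloor p^\delta\rfloor}{k}\leq(ep^{\delta}/k)^k$, the $\log k$ terms cancel and $\log(N/\ell)\geq k[(1-\delta)\log p-1]\geq \tfrac{1-\delta}{2}k\log p$ for $p\geq C$; here $p\geq k^3$ guarantees $\log(ep/k)\asymp\log p$, so $I(S;D)\lesssim C''k\log p$. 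Choosing the universal constants $\delta$ small and $C_2,C$ small enough that $C''\leq \tfrac{7}{8}(1-\delta)$ forces $I(S;D)+\log 2\leq \tfrac{7}{8}\log(N/\ell)$, and the list-Fano bound delivers $\mathbb{P}[S\nsubseteq\widehat{M}]\geq 1/8$, which proves the claim.
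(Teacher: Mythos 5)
Your proof is correct, but it follows a genuinely different route from the paper's. The paper argues by contradiction with sample splitting: if $\widehat{M}$ captured $\mathrm{supp}(\theta_0)$ with probability at least $7/8$, then refitting the sparse least-squares estimator $\widetilde{\theta}_k$ on a \emph{second, independent} sample restricted to $\widehat{M}$ would (by the prediction upper bound of Theorem \ref{thrm_prediction_majoration_minimax}, now with $p_0$ playing the role of $p$) achieve risk of order $\frac{k}{n}\log(\frac{ep_0}{k})\exp[C\frac{k}{n}\log(\frac{ep_0}{k})]$, and this contradicts the full-dimension minimax lower bound $\rho^2/4$ of Proposition \ref{prte_prediction_minoration_minimax} (itself resting on the unknown-variance testing lower bound via Le Cam's two-prior method) as soon as $\log p_0\leq\delta\log p$. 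You instead give a direct Bayesian argument: uniform prior on the support $S$, a list version of Fano's inequality with list $L=\{S'\subseteq\widehat{M}\}$ of size at most $\binom{\lfloor p^{\delta}\rfloor}{k}$, and — this is your key lemma, absent from the paper — the channel-capacity bound $I(S;{\bf Y},{\bf X})\leq\frac{n}{2}\log(1+\rho^2)$ obtained from Gaussian max-entropy, Jensen, and $\mathbb{E}[{\bf X}^T{\bf X}]=nI_p$; I checked the computation ($\mathrm{tr}\,\mathrm{cov}(\1_S)=k(1-k/p)$, the AM--GM determinant bound, and the cancellation of $\log k$ in $\log(\binom{p}{k}/\binom{\lfloor p^\delta\rfloor}{k})\geq k[(1-\delta)\log p-1]$) and it is sound, as is the bound $\log(1+\rho^2)\leq\log(1+Ct)+C_2t\leq(C+C_2)t$ with $t=\frac{k}{n}\log(\frac{ep}{k})$ which makes the whole thing work in all regimes. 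Your approach is self-contained and makes transparent \emph{why} an exponentially large signal does not help: spread over $k$ unknown coordinates, it buys only $O(\log\rho^2)$ bits per observation, far short of the $\approx(1-\delta)k\log p$ bits needed even for list recovery; the paper's approach is economical within the paper (it recycles two already-proved theorems) and avoids any constraint coupling the constants. The one point you should state explicitly is the quantifier structure: your argument needs the universal constants $C,C_2$ in the definition of $\rho^2$ to be small enough relative to $1-\delta$ (roughly $C+C_2\lesssim\frac{7}{16}(1-\delta)$, after converting $\log(ep/k)$ to $\log p$ via $p\geq k^3$), whereas the paper's constants are inherited from its prediction lower bound and $\delta$ is chosen last. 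Since the proposition's constants are unspecified universal constants — and the statement cannot hold for arbitrarily large ones, as exponentially strong signal eventually permits recovery — both proofs legitimately establish the result, but with possibly different admissible constants.
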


In an ultra-high dimensional setting, it is therefore not possible to
reduce the dimension of the problem to $p^\delta$ unless the  square norm of
$\theta_0$ is of order $\exp[Ck/n\log(p)]\sigma^2$. In (\ref{equation_reduction}), the
number $1/8$ is of no particular significance. It can be replaced by any
constant $c\in(0,1)$ if we take an asymptotic point of view
($(k,p,n)\rightarrow \infty$).

\begin{remark}
 In Proposition \ref{prte_minoratoin_subset_estimation}, we have taken the maximal risk points of view. If we put an uniform prior $\pi$ on $\mathcal{C}_k^p(\rho)$, it is possible to replace (\ref{equation_reduction}) by 
$$ \pi\left[\mathbb{P}_{\theta_0,1}\left\{\mathrm{supp}
(\theta_0) \nsubseteq \widehat{M}
\right\}\right]\geq C\ ,$$
where $C$ is a positive constant.
\end{remark}

\begin{remark}
In order to shed light on the
problem of dimension reduction, let us consider a simple asymptotic example:
$p_n=\exp(n^{\gamma_1})$   and ${k_n=
n^{1-(\gamma_1\wedge 1)+\gamma_2}}$ with $\gamma_1>0$ and $\gamma_2>0$.  If we
assume that 
$\theta_n\in\Theta[k_n,p_n]$ is such that $\|\theta_n\|_p^2\leq
\exp(C n^{\gamma_2+(\gamma_1-1)_+})$, then it is not possible to find a subset
$\widehat{M}_n$ of size
$\exp(\delta n^{\gamma_1})$ that contains the support of $\theta_n$ with
probability going to one, where $\delta$ is defined as in Proposition
\ref{prte_minoratoin_subset_estimation}. Consequently, we still have
to keep at least $\exp(\delta n^{\gamma_1})$ variables after the process of
dimension reduction if we do not want to forget relevant variables!
\end{remark}

\section{What is an ultra-high dimensional
problem?}\label{section_simulations}

Until now, we have stated that a problem is ultra-high dimensional when
$k\log(p/k)$
is large compared to $n$. It has been proved that in such a setting, estimation of $\theta_0$, support estimation and even dimension reduction become almost impossible. In this section, we numerically illustrate this phase transition phenomenon. This allows us to quantify on specific examples how large should be $k\log(p/k)/n$ for the phase transition to occur.\\

\noindent
{\bf First simulation setting}. Following the example described in the
introduction, we consider a
Gaussian design linear regression model with $p=5000$ and $p=200$, $n=50$,
$\Sigma=I_p$, and
$\sigma=1$. 
We set the number of non zero components $k$ ranging from 1 to 15.  $k$ being
fixed, we
take $\theta_0$ such that $(\theta_0)_1=\ldots=(\theta_0)_k=4\sqrt{\log(p)/n}\approx
1.30$ (resp. 1.65) for $p=200$ (resp. $p=5000$)
and $(\theta_0)_{k+1}=\ldots =(\theta_0)_p=0$. As a consequence, we have
$\|\theta_0\|^2=16k\log(p)/n$. The non-zero coefficients of $\theta_0$ are
chosen large enough so that the support of $\theta_0$ is recoverable 
when the problem is not ultra-high dimensional. Each experiment is
repeated $N=100$ times.\\

\noindent 
{\bf Dimension reduction procedures}. We apply the SIS method~\cite{fan_ultra}
to reduce the
dimension to a set $\widehat{M}^S$ of size $p_0=50$. We then compute the Power
of the procedure,
$$\mathrm{Power}:= \frac{\mathrm{Card}[\widehat{M}^S\cap \{1,\ldots,k\}]}{k}\
.$$
The power measures whether the dimension reduction has been
performed efficiently.

We also compute the regularization path of the Lasso using the LARS~\cite{lars}
algorithm. Before applying the Lasso, each column of ${\bf X}$ is normalized.
We consider the set $\widehat{M}^L$ made of the  $p_0$ covariates occurring
first in the regularization
path.  We do not argue that SIS and the
Lasso are the best methods here. We have chosen them because they are classical
and easy to implement.\\

\begin{figure}[htbp]
\begin{center}
{\includegraphics[height=8cm,angle=-90]{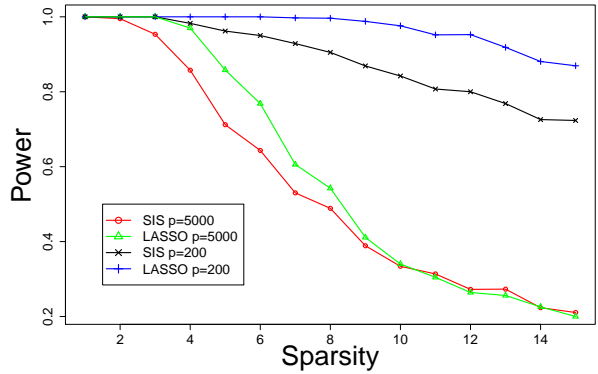}}
\end{center}
\caption{\label{figure1} Power of the dimension reduction
procedures (SIS and Lasso) as a function of $k$.}
\end{figure}

\noindent 
{\bf Results}. The results are presented on Figure \ref{figure1}. When $k$ is
small, the dimension reduction problem is not ultra-high dimensional and the
Lasso and the SIS methods keep all the relevant covariates.
For large $k$, the both methods miss some of the relevant
covariates.  For $p=5000$, there is a clear decrease in the power
beyond $k=4$. For $p=5000$ and $k=8$, both methods only have a power close to
0.5. In expectation, only four covariates belong to the sets $\widehat{M}^S$
and $\widehat{M}^L$ of size 50. For $p=200$, there is not a so clear transition,
but the power decreases slowly for $k>8$. If there was no elbow effect in the minimax risk of estimation, then it would still be possible to recover the support of $\theta_0$ with high probability. Indeed, each non-zero component of $\theta_0$ is larger than $4\sqrt{\log(p)/n}$ which is detectable in a reasonable setting (see e.g.~\cite{wain_minimax}). For instance, for $k=6$ and $p=5000$, $\|\theta_0\|_p^2/\sigma^2=16k\log(p)/n\approx 16.4$. Here, the elbow effect implies that even for a huge signal over noise ratio, it is impossible to reduce the dimension of the problem without forgetting relevant variables.
\\

\noindent 
{\bf Second simulation setting.} We still take $p=5000$, $n=50$,
$\Sigma=I_p$, $\sigma=1$, and $k$ ranging from 1 to 5. $k$ being fixed,
we take $\theta_0$ such that $(\theta_0)_1=\ldots=(\theta_0)_k=u\sqrt{\log(p)/n}$
and $(\theta_0)_{k+1}=\ldots =(\theta_0)_p=0$. Relying on  $N=100$ experiments, we
estimate $u^*_{k}$ the smallest $u$ such that $\widehat{M}^L$ has a power larger
than $0.9$. $u^*_k$ corresponds (up to the renormalization $\sqrt{\log(p)/n}$)
to the minimal intensity of the signal so that the dimension reduction
method does not forget relevant covariates.

\begin{figure}[htbp]
\begin{center}
{\includegraphics[height=7cm,angle=-90]{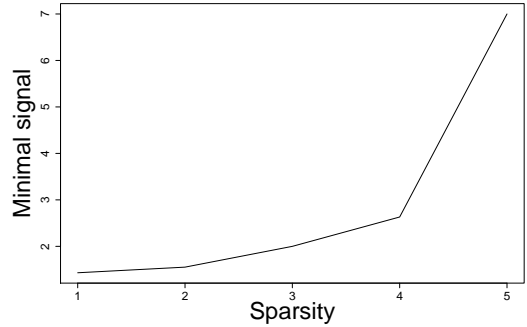}}
\end{center}
\caption{Minimal signal $u^*_k$ as a function of $k$.}\label{figure2} 
\end{figure}
 
~\\\noindent 
{\bf Results.}  The results are presented on Figure \ref{figure2}. For small
$k$, $u^{*}_k$ remains close to $\sqrt{2}$. 
In contrast, we observe that $u^*_k$ blows up at $k=5$. We have not depicted
$u^*_6$, but we have $u^*_6\geq 100$. These two simulation studies confirm that when $k$ becomes large (in
comparison to $p$ and $n$),
the dimension reduction problem becomes extremely
difficult.

\begin{remark}[{\bf Rule of thumb}]
From these simulations and from other theoretical
arguments~(e.g. \cite{giraud08,dota,wain_minimax2}), we derive a simple rule of thumb. We say that
a
problem is ultra-high dimensional if
\begin{equation}
\text{\fbox{ $\frac{k\log(p/k)}{n}\geq 1/2$.}}
\end{equation}\label{critere}
For $p=5000$ and $n=50$, this corresponds to $k\geq 4$. Setting $p=200$ and
$n=50$ yields $k\geq 8$. In practice, we do not know $k$ in advance. Nevertheless, this criterion (\ref{critere}) helps us to know what is the largest sparsity index such that the statistical problem remains reasonably difficult in the minimax sense.	
\end{remark}

\section{Discussion}\label{section_conclusion}

As stated in Sections \ref{section_linear_testing}--\ref{section_inverse}, the
behaviors of the minimax separation distances and of the minimax risks become
really different in an ultra-high dimensional setting. Apart from the test
problem (${\bf P_1}$) with known variance and the problem  of prediction~(${\bf
P_2}$) with fixed
design, all the other separations distances and minimax risks blow up when 
$k\log(p/k)$ becomes larger than $n$.

This elbow effect has important practical implications: there is no hope of
selecting the 
relevant covariates in an ultra-high dimensional setting, except if 
signal over noise ratio is exponentially large. Moreover, even dimension
reduction techniques cannot work well in such a setting.\\

In linear testing (${\bf P_1}$), we have proved that the optimal separation
distances highly depend on the knowledge of the variance.
Most of the testing procedures in the
literature rely on the knowledge of
$\sigma^2$. Some specific work is therefore needed to derive fast
and efficient procedures under unknown variance (but see~\cite{Ingster10} for a procedure in a specific situation).\\


We have
not discussed so far the problem of variance estimation. From the 
minimax lower bounds of testing, we deduce the following lower bound.
\begin{prte}\label{prte_minoration_variance} Assume that $p\geq n\geq C$.
For any $1\leq k\leq p^{1/3}$, there exist designs ${\bf X}$ such that 
\begin{equation*}
\inf_{\widehat{\sigma}} \sup_{\sigma>0,\ \theta_0\in\Theta[k,p]}\mathbb{E}_{\theta_0,\sigma}\left[\left|\frac{\widehat{\sigma}^2}{\sigma^2}-\frac{\sigma^2}{\widehat{\sigma}^2}\right|\right]\geq C_1\frac{k}{n}\log\left(\frac{p}{k}\right)\exp\left[C_2\frac{k}{n}
\log\left(\frac{p}{k}\right)\right ]\ .
\end{equation*}
\end{prte}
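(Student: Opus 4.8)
The plan is to derive this bound from the unknown-variance testing lower bound of Proposition~\ref{prte_minoration_minimax_test_fixed_unknown_variance} by a reduction: an accurate variance estimator would furnish a powerful level-$\alpha$ test of ${\bf H_0}:\{\theta_0=0_p\}$, which is impossible in this regime. I would fix $\alpha=\delta=1/8$ (so $\alpha+\delta\leq 26\%$) and work with the very design ${\bf X}$ supplied by Proposition~\ref{prte_minoration_minimax_test_fixed_unknown_variance}, for which $(\rho^*_{F,U}[k,{\bf X}])^2\geq C_1\frac{k}{n}\log(p)\exp[C_2\frac{k}{n}\log(p)]$. Since $k\leq p^{1/3}$ forces $\frac{2}{3}\log(p)\leq\log(p/k)\leq\log(p)$, this threshold has the same order as the target quantity $A:=\frac{k}{n}\log(p/k)\exp[C_2\frac{k}{n}\log(p/k)]$, up to a redefinition of $C_2$.

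Given any estimator $\widehat{\sigma}^2$ with maximal risk $R:=\sup_{\sigma,\theta_0}\mathbb{E}_{\theta_0,\sigma}[\,|\widehat{\sigma}^2/\sigma^2-\sigma^2/\widehat{\sigma}^2|\,]$, I would build the scale-invariant test $\Phi:=\mathbf{1}\{\widehat{\sigma}^2\leq\tau\widehat{V}\}$, where $\widehat{V}:=\|{\bf Y}\|_n^2/n$ estimates the total variance and $\tau\in(0,1)$ is tuned below. The rationale is that a good variance estimator must report approximately the noise variance $\sigma^2$ under both hypotheses, while $\widehat{V}$ concentrates at $\sigma^2$ under ${\bf H_0}$ but at $(1+\rho^2)\sigma^2$ under any alternative with $\|{\bf X}\theta_0\|_n^2=n\rho^2\sigma^2$; hence the ratio $\widehat{\sigma}^2/\widehat{V}$ lies near $1$ under ${\bf H_0}$ and near $1/(1+\rho^2)$ under the alternative, and a suitable $\tau$ separates them. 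Because $\Phi$ depends on the data only through this ratio, its level and power are the same for every unknown $\sigma$.

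The key step is to convert the risk bound into error control for $\Phi$. Markov's inequality applied to the loss shows that, outside an event of probability at most $R/t_0$, one has $h(t_0)\leq\widehat{\sigma}^2/\sigma^2\leq g(t_0)$, where $h,g$ solve $r-1/r=\mp t_0$ and satisfy $g(t_0)/h(t_0)\leq 1+Ct_0$ for small $t_0$; I would pair this with the central and non-central $\chi^2$ concentration of $\widehat{V}/\sigma^2$, which confines it to within a factor $1\pm C/\sqrt{n}$ of its mean outside an exponentially small event. Taking $t_0\asymp R$ and the $\chi^2$ deviation $\asymp 1/\sqrt{n}$ so that all discarded probabilities stay below $\min(\alpha,\delta)/2$, both error probabilities of $\Phi$ fall below $\alpha$ and $\delta$ as soon as $1+\rho^2>(1+Ct_0)(1+C/\sqrt{n})$, that is as soon as $\rho^2\geq C_3(R+1/\sqrt{n})$. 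For such $\rho$, $\Phi$ is a level-$\alpha$ test with separation $\rho$, so Proposition~\ref{prte_minoration_minimax_test_fixed_unknown_variance} forces $\rho^2\geq(\rho^*_{F,U}[k,{\bf X}])^2\gtrsim A$. Since no $\rho^2<A$ can yield a valid test, the feasibility threshold cannot sit below $A$, which gives $C_3(R+1/\sqrt{n})\geq A$ and therefore $R\gtrsim A$ once $A\geq C_4/\sqrt{n}$.

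The main obstacle is exactly this $1/\sqrt{n}$ term, coming from the fluctuations of the energy estimate $\widehat{V}$: it is negligible precisely when $A\gtrsim 1/\sqrt{n}$, which holds throughout the ultra-high dimensional range $k\log(p/k)\gtrsim n$ (where $A$ is even exponentially large) and, more generally, once $k\log(p/k)\gtrsim\sqrt{n}$. I do not expect the energy-based test to be tight below separation $1/\sqrt{n}$, so to reach the remaining small-$k$ range I would replace the reduction by a direct mixture Le Cam bound, opposing the null $\mathcal{N}(0,(1+\rho^2)I_n)$ to the signal mixture $\int\mathcal{N}({\bf X}\theta_0,I_n)\,d\pi(\theta_0)$ from the proof of Proposition~\ref{prte_minoration_minimax_test_fixed_unknown_variance}. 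The delicate point there is that requiring $\widehat{\sigma}^2$ to be accurate simultaneously for pure noise of variance $1$ and of variance $1+\rho^2$—two confusable but differently scaled distributions—prevents it from hedging to their geometric mean and forces a loss of order $\rho^2$ rather than $\rho$, which is what accounts for the squared (rather than linear) separation appearing in the stated bound.
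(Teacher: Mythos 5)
Your proposal is correct in substance, but it is a hybrid, and only half of it is genuinely different from the paper. The paper's proof is a single direct Le Cam two-point argument: with $\sigma_0^2=(1+\rho^2)^{-1}$ and the testing prior $\mu_\rho$ from the proof of Theorem~\ref{thrm_minimax_testing}, any estimator confounding the null $\mathbb{P}_{0_p,1}$ with the mixture $\mathbb{P}_{\mu_\rho,\sigma_0}$ must pay at least the hedging loss $l(1,\sqrt{\sigma_0})=\rho^2/\sqrt{1+\rho^2}\geq (\rho\wedge\rho^2)/\sqrt{2}$, and the $\chi^2$-divergence bound from that proof (transferred to a fixed design by a Markov argument over realizations of a standard Gaussian design) permits $\rho^2\asymp\frac{k}{n}\log(p/k)\exp[C\frac{k}{n}\log(p/k)]$ in \emph{every} regime, giving the result for all $k\leq p^{1/3}$ with one construction. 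Your primary route --- converting a variance estimator plus the energy statistic $\|{\bf Y}\|_n^2/n$ into a level-$\alpha$ test and invoking Proposition~\ref{prte_minoration_minimax_test_fixed_unknown_variance} as a black box --- is a valid and genuinely different reduction, and you correctly diagnose its limitation: the $\chi^2$ fluctuations of the energy statistic cost an additive $1/\sqrt{n}$ in the attainable separation, so the reduction only yields the stated bound when $k\log(p/k)\gtrsim\sqrt{n}$. Since the proposition covers all $1\leq k\leq p^{1/3}$ (e.g.\ $k=1$ with $p$ polynomial in $n$, where the target is of order $\log(p)/n\ll 1/\sqrt{n}$), your fallback is not optional, and that fallback \emph{is} the paper's argument, needing the prior and the $\chi^2$ computation from inside the proof of the testing bound. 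In short: the reduction buys a proof from the statement (rather than the proof) of the testing result in the ultra-high-dimensional regime that motivates the proposition; the paper's direct argument buys uniform coverage of all regimes at once.

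Two wording issues, neither fatal. First, the test $\Phi=\mathbf{1}\{\widehat{\sigma}^2\leq\tau\widehat{V}\}$ is not scale-invariant for an arbitrary (non-equivariant) estimator $\widehat{\sigma}^2$; the uniform-in-$\sigma$ control of both error probabilities instead follows from applying your Markov step at each fixed $\sigma$ (legitimate, since $R$ is a supremum over $\sigma$ and $\theta_0$), which is what your next step actually does. Second, indistinguishability does not ``prevent hedging to the geometric mean'': the geometric-mean hedge is optimal and itself incurs loss $\rho^2/\sqrt{1+\rho^2}$, and the Le Cam bound shows no estimator can beat this hedge; that is exactly where the $\rho\wedge\rho^2$ (hence squared, in the relevant small-$\rho$ range) rate comes from.
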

As a consequence, the problem of variance estimation becomes extremely difficult in an ultra-high dimensional setting. \\

In Propositions \ref{prte_minimax_prediction_fixed_design} and
\ref{prte_minoration_inverse_fixe}, we have provided minimax lower bounds
for (${\bf P_2}$)  and (${\bf P_3}$) over $\Theta[k,p]$ for arbitrary designs
${\bf
X}$. Our corresponding upper bounds match these lower bounds when the
restricted eigenvalues of ${\bf X}^T{\bf X}$ are close to each other. However, these
bounds do not agree anymore when these restricted eigenvalues are away from each other.
 Deriving the exact
dependency of the minimax risks on ${\bf X}$ would require sharper lower bounds
and the analysis of
new estimation procedures.\\

Our minimax results use the  Gaussianity of the noise
$\boldsymbol{\epsilon}$ and the Gaussianity of the design ${\bf X}$ in the
random design setting.
In an ultra-high dimensional setting, the minimax upper bounds do not seem to be
robust with respect to the Gaussianity. In smaller dimensions
($k[1+\log(p/k)]<n$), the Gaussian distribution of the design
is less critical. For instance, consider a design ${\bf X}$ where all the
components are independent and follow a
subgaussian distribution. By a result of Rudelson and 
Vershynin~\cite{rudelson}, the
restricted eigenvalues of ${\bf X}^T{\bf X}$ remain away from $0$ with high
probability. Consequently, some of the minimax bounds should still hold for
subgaussian designs.
Nevertheless, the derivation of sharp minimax bounds for non-Gaussian designs
and noises remains an open problem

\section{Proofs of the minimax lower bounds}\label{section_proof_lowerbound}

Some propositions contain both minimax lower bounds and upper bounds. This section is devoted to the proof of the main lower bounds, while the upper bounds are proved in Appendix B in \cite{technical}. In order to keep our notations as short as possible, we set 
$$\eta=2(1-\alpha-\delta)\ .$$
We also note $\|.\|_{TV}$ for the total variation norm. For any subset
$\mathcal{T}\subset\mathbb{R}^p$, $\alpha\in(0,1)$, covariance
matrix $\Sigma$, and  any variance $\sigma^2$,  we denote $\beta^R_{\Sigma,
\sigma, \alpha}(\mathcal{T})$ the quantity
$$\beta^R_{\Sigma, \sigma, \alpha}(\mathcal{T}):=
\inf_{\Phi_{\alpha}}\sup_{\theta_0\in\mathcal{T}}
\mathbb{P}_{\sigma\theta_0,\sigma}[\Phi_{\alpha}=0]\ ,$$
the infimum being taken over all tests $\Phi_{\alpha}$ satisfying
$\mathbb{P}_{0_p,\sigma}[\Phi_{\alpha}=0]\leq \alpha$. Its counterpart for unknown variance is defined by
$$\beta^R_{\Sigma, \alpha}(\mathcal{T}):=
\inf_{\Phi_{\alpha}}\sup_{\sigma>0,\ \theta_0\in\mathcal{T}}
\mathbb{P}_{\sigma\theta_0,\sigma}[\Phi_{\alpha}=0]\ ,$$
the infimum being taken over all tests $\Phi_{\alpha}$ satisfying
$\sup_{\sigma>0}\mathbb{P}_{0_p,\sigma}[\Phi_{\alpha}=0]\leq \alpha$. 
Similarly, we define
$\beta^F_{{\bf X},\sigma,\alpha}(\mathcal{T})$ for fixed design and $\beta^F_{{\bf
X},\alpha}(\mathcal{T})$ for fixed design and unknown variance.

Most of the minimax lower bounds in this paper are based on an approach which
goes back to Ingster~\cite{ingster93a,ingster93b,ingster93c}. The following
lemma encompasses
fixed and random design and known  and unknown variance.
\begin{lemma}\label{lemma_le_cam_method} 
Let $\mathcal{T}$ be a subset of
$\mathbb{R}^p\setminus\{0_p\}$ and let $\sigma$ and $\sigma_0$ be two positive integers. Consider
 $\mu$ a probability
measure on $\sigma\mathcal{T}:=\{\sigma\theta,\ \theta\in\mathcal{T}\}$.  We note
$\mathbb{P}_{\mu,\sigma}=\int_{\sigma\mathcal{T}}\mathbb{P}_{\theta ,\sigma}d\mu$ and
$L_{\mu}= d\mathbb{P}_{\mu,\sigma}/d\mathbb{P}_{0_p,\sigma_0}$. Then,
\begin{eqnarray}
\beta_{\alpha}(\mathcal{T})&\geq &1-
\alpha-\frac{1}{2}\|\mathbb{P}_{\mu,\sigma}-\mathbb{P}_{0_p,\sigma_0}\|_{TV}
. \nonumber\\
&\geq & 1- \alpha -
\frac{1}{2}\left(\mathbb{E}_{0_p,\sigma_0}\left[L^2_{\mu}({\bf
Y},{\bf X})\right]-1\right)^{1/2}\ .
\label{eq_fondamental_lemma}
\end{eqnarray}
Here, $\beta_{\alpha}(\mathcal{T})$ can be replaced by $\beta^F_{{\bf X}, \alpha}(\mathcal{T})$ or $\beta^R_{\Sigma,\alpha}(\mathcal{T})$. If we also have $\sigma=\sigma_0$, then $\beta_{\alpha}(\mathcal{T})$  can be replaced by  $\beta^R_{\Sigma,\sigma_0,\alpha}(\mathcal{T})$ or  $\beta^F_{{\bf X},\sigma_0,\alpha}(\mathcal{T})$.
\end{lemma}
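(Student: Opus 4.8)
The plan is to run the standard Le~Cam/Ingster reduction from the minimax type~II error to a single Bayesian alternative given by the mixture $\mathbb{P}_{\mu,\sigma}$, and then to control the distance between this mixture and the null $\mathbb{P}_{0_p,\sigma_0}$ first in total variation and afterwards through the $\chi^2$-type quantity $\mathbb{E}_{0_p,\sigma_0}[L_\mu^2]-1$. First I would fix an arbitrary test $\Phi_\alpha$ admissible for the relevant version of $\beta_\alpha(\mathcal{T})$ and bound the worst-case type~II error below by the averaged one: since every point in the support of $\mu$ is of the form $\sigma\theta_0$ with $\theta_0\in\mathcal{T}$, Fubini gives $\sup_{\theta_0\in\mathcal{T}}\mathbb{P}_{\sigma\theta_0,\sigma}[\Phi_\alpha=0]\geq \mathbb{P}_{\mu,\sigma}[\Phi_\alpha=0]$. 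Writing the latter as $\mathbb{P}_{0_p,\sigma_0}[\Phi_\alpha=0]$ minus the discrepancy on the event $\{\Phi_\alpha=0\}$ and bounding that discrepancy by $\tfrac12\|\mathbb{P}_{\mu,\sigma}-\mathbb{P}_{0_p,\sigma_0}\|_{TV}$ (using the $L^1$ normalization of the total variation norm, so that $\sup_A|P(A)-Q(A)|=\tfrac12\|P-Q\|_{TV}$) yields $\mathbb{P}_{\mu,\sigma}[\Phi_\alpha=0]\geq \mathbb{P}_{0_p,\sigma_0}[\Phi_\alpha=0]-\tfrac12\|\mathbb{P}_{\mu,\sigma}-\mathbb{P}_{0_p,\sigma_0}\|_{TV}$.

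The key bookkeeping step is to verify that $\mathbb{P}_{0_p,\sigma_0}[\Phi_\alpha=0]\geq 1-\alpha$ holds in each of the four cases simultaneously. For the unknown-variance functionals $\beta^F_{{\bf X},\alpha}$ and $\beta^R_{\Sigma,\alpha}$, the level constraint is imposed uniformly over all variances, hence it applies at $\sigma_0$ in particular and no relation between $\sigma$ and $\sigma_0$ is required; moreover the worst-case type~II error, being a supremum over both $\sigma>0$ and $\theta_0\in\mathcal{T}$, dominates the average taken at the specific variance $\sigma$ chosen in the lemma. For the known-variance functionals $\beta^F_{{\bf X},\sigma_0,\alpha}$ and $\beta^R_{\Sigma,\sigma_0,\alpha}$, the level is controlled only at variance $\sigma_0$ and the alternatives are indexed by that same variance; this is precisely why one must impose $\sigma=\sigma_0$, so that the null appearing in the level constraint and the mixture used in the average share the same noise level. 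Taking the infimum over admissible $\Phi_\alpha$ then produces the first displayed inequality in all four cases at once.

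Finally I would pass from total variation to the $\chi^2$ quantity. Because the Gaussian models here are mutually absolutely continuous (the law of the design is unchanged, and only the conditional mean and variance of ${\bf Y}$ vary), the ratio $L_\mu=d\mathbb{P}_{\mu,\sigma}/d\mathbb{P}_{0_p,\sigma_0}$ is well defined and satisfies $\mathbb{E}_{0_p,\sigma_0}[L_\mu]=1$. Writing $\tfrac12\|\mathbb{P}_{\mu,\sigma}-\mathbb{P}_{0_p,\sigma_0}\|_{TV}=\tfrac12\,\mathbb{E}_{0_p,\sigma_0}|L_\mu-1|$ and applying Cauchy--Schwarz bounds this by $\tfrac12(\mathbb{E}_{0_p,\sigma_0}[(L_\mu-1)^2])^{1/2}$, and the elementary identity $\mathbb{E}_{0_p,\sigma_0}[(L_\mu-1)^2]=\mathbb{E}_{0_p,\sigma_0}[L_\mu^2]-1$ (which uses $\mathbb{E}_{0_p,\sigma_0}[L_\mu]=1$) delivers the second inequality.

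I expect the only genuinely delicate point to be the case distinction around the level constraint together with the role of the hypothesis $\sigma=\sigma_0$: one must check that the same chain of inequalities is valid for each of the four $\beta$-functionals, being careful that for known variance the null and the mixture are compared at a common noise level, whereas for unknown variance the uniform-in-$\sigma$ level control is what permits $\sigma\neq\sigma_0$. The remaining steps—the averaging inequality, the total-variation comparison, and the Cauchy--Schwarz passage to the $\chi^2$ quantity—are routine.
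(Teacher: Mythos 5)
Your proof is correct and follows exactly the standard Le Cam/Ingster reduction that the paper itself invokes (the paper gives no self-contained proof but refers to Section 7.1 of Baraud's work, which proceeds the same way): bound the worst-case type II error by the error against the mixture $\mathbb{P}_{\mu,\sigma}$, compare with the null via total variation using the level constraint $\mathbb{P}_{0_p,\sigma_0}[\Phi_\alpha=0]\geq 1-\alpha$, and pass to the $\chi^2$-type bound via $\mathbb{E}_{0_p,\sigma_0}[L_\mu]=1$ and Cauchy--Schwarz. Your careful treatment of the four $\beta$-functionals, in particular why the unknown-variance versions tolerate $\sigma\neq\sigma_0$ while the known-variance versions require $\sigma=\sigma_0$, is precisely the point of the lemma's final sentence and is handled correctly.
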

We refer to Baraud~\cite{baraudminimax}~Section 7.1 for a proof and further
explanations in a close framework. The main idea is to find a prior probability
on $\mathcal{T}$ so that the total variation distance between
$\mathbb{P}_{\mu,\sigma}$ and $\mathbb{P}_{0_p,\sigma_0}$ is as large as possible. 
We derive from Lemma \ref{lemma_le_cam_method} that  $\beta_{\alpha}(\mathcal{T})\geq \delta$ if
$\mathbb{E}_{0_p,\sigma_0}[L^2_{\mu}({\bf
Y},{\bf X})]\leq 1+\eta^2$.

\subsection{Proof of the lower  bound (\ref{minoration_test_random_eq_known_variance2}) in Theorem
\ref{thrm_minoration_minimax_variance_conditionnelle}}

\begin{proof}[Proof of Theorem
\ref{thrm_minoration_minimax_variance_conditionnelle}]

By homogeneity, we can assume that $\sigma^2= \var(Y|X)=1$. We first build a
suitable prior probability $\mu_{\rho}$  in order to apply Lemma
\ref{lemma_le_cam_method}.\\

 Let us take a set $\hat{m}$ of size $k$
uniformly in $\mathcal{M}(k,p)$ (defined in Section \ref{section_notation}).
Let
$\xi=(\xi_j)_{1\leq j\leq p}$
be a sequence of independent Rademacher random variables. Consider some
$\rho>0$. Define
$\lambda=\rho/\sqrt{k}$ and consider
$\mu_{\rho}$ the distribution of the random variable
$\theta_{\hat{m},\xi}=\sum_{j\in\hat{m}}\lambda\xi_j e_j$.  $\mathbb{P}_{\mu_{\rho},1}$ stands for the distribution of $({\bf Y},{\bf X})$ with $\theta_0\sim\mu_{\rho}$ and $\sigma=1$.
Here, $(e_j)_{1\leq j\leq p}$ is the orthonormal family of vectors
of 
$\mathbb{R}^{p}$ defined by 
$$ (e_j)_i =1 \text{ if } i=j \text{ and } (e_i)_j=0 \text{ otherwise}. $$

The likelihood ratio 
$L_{\mu_{\rho}}({\bf X},{\bf Y})= \mathbb{P}_{\mu_{\rho},1}/\mathbb{P}_{0_p,1}$ writes
\begin{eqnarray*}
 L_{\mu_{\rho}}({\bf X},{\bf Y}) =
\mathbb{E}_{\xi,\hat{m}}\left[\exp\left(-\frac{\|{\bf Y}-{\bf
X}\theta_{\hat{m},\xi}\|_n^2-\|{\bf Y}\|_n^2}{2}\right)\right]\ ,
\end{eqnarray*}
where $\mathbb{E}_{\xi,\hat{m}}$ stands for the expectation with respect to the distribution of $\xi$ and $\widehat{m}$.

In order to apply Lemma \ref{lemma_le_cam_method}, we need to upper bound the
expectation of $L^2_{\mu_{\rho}}({\bf X},{\bf Y})$.
Let us first take the expectation of $L^2_{\mu_{\rho}}({\bf X},{\bf Y})$ with
respect to ${\bf Y}$.
\begin{eqnarray}
 \lefteqn{\mathbb{E}_{0_p,1}\left[L^2_{\mu_{\rho}}({\bf X},{\bf Y})\right]
}\nonumber\\&=& 2^{-2k}\binom{p}{k}^{-2}\sum_{m_1,m_2,\xi^{(1)},\xi^{(2)}}
\mathbb{E}_{0_p,1}\left[e^{-\left(\|{\bf
X}\theta_{m_1,\xi^{(1)}}\|_n^2+\|{\bf
X}\theta_{m_2,\xi^{(2)}}\|_n^2\right)/2+\left\langle {\bf Y},{\bf
X}\left(\theta_{m_1,\xi^{(1)}}+ \theta_{m_2,\xi^{(2)}}\right)\right\rangle_n
}\right]\nonumber\\
& = &
\mathbb{E}_{\bf
X}\left[\mathbb{E}_{\hat{m}_1,\hat{m}_2,\xi^{(1)},\xi^{(2)}}\left\{\exp\left(\langle{ \bf
X}\theta_{\hat{m}_1,\xi^{(1)}},{\bf X}\theta_{\hat{m}_2,\xi^{(2)}}\rangle_n\right)\right\}\right]\
,\label{expression_lmu2}
\end{eqnarray}
where $\mathbb{E}_{\bf X}$ stands for the expectation with respect to ${\bf X}$ while $\mathbb{E}_{\hat{m}_1,\hat{m}_2,\xi^{(1)},\xi^{(2)}}$ refers to the expectation with respect to the independent variables $\xi^{(1)}$, $\xi^{(2)}$, $m_1$ and $m_2$.

\begin{lemma}\label{lemma_majoration_design_standard}
If we assume that $\rho^2\leq C \left[\frac{k}{n}\log\left(1+\frac{p}{k^2}\right)\wedge \frac{1}{\sqrt{n}}\right]$,
 then we have
\begin{eqnarray*}
\mathbb{E}_{\bf X}\left[\mathbb{E}_{0_p,1}\left\{\left. L^2_{\mu_{\rho}}\left({\bf
Y},{\bf
X}\right)\right|{\bf X}\right\}\right]\leq 1+ \eta^2 \ .
\end{eqnarray*}
\end{lemma}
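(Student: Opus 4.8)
The plan is to start from the identity (\ref{expression_lmu2}) and evaluate the inner design expectation explicitly. Fix a configuration $(m_1,m_2,\xi^{(1)},\xi^{(2)})$ and set $a=\theta_{m_1,\xi^{(1)}}$, $b=\theta_{m_2,\xi^{(2)}}$. Since $\Sigma=I_p$, the $n$ rows of ${\bf X}$ are i.i.d.\ $\mathcal{N}(0_p,I_p)$, so $\langle {\bf X}a,{\bf X}b\rangle_n=\sum_{r=1}^n U_rV_r$ where, for each row, $(U_r,V_r)$ is a centered bivariate Gaussian with $\var(U_r)=\|a\|_p^2=\rho^2$, $\var(V_r)=\|b\|_p^2=\rho^2$ and $\cov(U_r,V_r)=\langle a,b\rangle_p=:c$. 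First I would compute the one-row moment generating function by diagonalizing: $U_rV_r=\tfrac14[(U_r+V_r)^2-(U_r-V_r)^2]$, where $U_r+V_r$ and $U_r-V_r$ are independent centered Gaussians of variances $2(\rho^2+c)$ and $2(\rho^2-c)$. Using $\mathbb{E}[\exp(sZ^2)]=(1-2s\tau^2)^{-1/2}$ for $Z\sim\mathcal{N}(0,\tau^2)$ and multiplying over the $n$ independent rows gives the exact formula
$$\mathbb{E}_{\bf X}\!\left[\exp\big(\langle {\bf X}a,{\bf X}b\rangle_n\big)\right]=\big[(1-c)^2-\rho^4\big]^{-n/2}.$$
Note that $|c|=\tfrac{\rho^2}{k}|D|\le\rho^2$ with $D:=\sum_{j\in m_1\cap m_2}\xi^{(1)}_j\xi^{(2)}_j$, so for $\rho^2<1/2$ one has $(1-c)^2-\rho^4\ge 1-2\rho^2>0$ and the computation is legitimate.

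Next I would control this expression by isolating the one dangerous term. Writing $(1-c)^2-\rho^4=1-(2c-c^2+\rho^4)$ and using $-\log(1-y)\le y+y^2$ for $|y|$ small, one gets
$$\big[(1-c)^2-\rho^4\big]^{-n/2}\le \exp\!\Big(nc+C'n\big(c^2+\rho^4\big)\Big).$$
The point of keeping \emph{both} Gaussian factors above is that the first-order contributions in $\rho^2$ cancel, leaving a leading term $nc=\tfrac{n\rho^2}{k}D$ together with an error $C'n(c^2+\rho^4)\le 2C'n\rho^4$. The assumption $\rho^2\le C/\sqrt{n}$ makes this error at most a small constant, so that after averaging over $(m_1,m_2,\xi)$,
$$\mathbb{E}_{0_p,1}\big[L^2_{\mu_\rho}\big]\le e^{2C'n\rho^4}\,\mathbb{E}_{m_1,m_2,\xi}\!\left[\exp\!\Big(\tfrac{n\rho^2}{k}D\Big)\right].$$
Conditioning on $t=|m_1\cap m_2|$, the variables $(\xi^{(1)}_j\xi^{(2)}_j)_{j\in m_1\cap m_2}$ are i.i.d.\ Rademacher, whence $\mathbb{E}_\xi[\exp(\beta D)\mid t]=(\cosh\beta)^t$ with $\beta:=n\rho^2/k$.

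Finally I would bound the subset average. Since $m_1,m_2$ are independent uniform $k$-subsets, $|m_1\cap m_2|$ is hypergeometric, and by the classical domination of the hypergeometric moment generating function by the binomial one (applied to the convex increasing map $x\mapsto(\cosh\beta)^x$),
$$\mathbb{E}_{m_1,m_2}\!\left[(\cosh\beta)^{|m_1\cap m_2|}\right]\le\Big(1+\tfrac{k}{p}(\cosh\beta-1)\Big)^k\le\exp\!\Big(\tfrac{k^2}{p}(\cosh\beta-1)\Big).$$
It then remains to check that $\tfrac{k^2}{p}(\cosh\beta-1)$ is a small constant. Using $\cosh\beta-1\le e^\beta-1\le\beta e^\beta$ together with $\rho^2\le C\tfrac{k}{n}\log(1+p/k^2)$, which yields $\beta\le C\log(1+p/k^2)$ and $e^\beta\le(1+p/k^2)^{C}$, one reduces to the boundedness of $x\mapsto\log(1+x)(1+x)^{C}/x$ on $[1,\infty)$ (here $x=p/k^2\ge 1$ since $p\ge n^2\ge k^2$), which holds for $C<1$. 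Choosing the universal constant $C$ small enough then forces both $e^{2C'n\rho^4}$ and $\exp[\tfrac{k^2}{p}(\cosh\beta-1)]$ to stay within $1+\eta^2$, giving the claim. The main obstacle is precisely this last step: one must play the two halves of the hypothesis against the two distinct sources of growth — the $n\rho^4$ error, controlled by $\rho^2\le C/\sqrt{n}$, and the intersection term, controlled by $\rho^2\le C\tfrac{k}{n}\log(1+p/k^2)$ — and verify the uniform boundedness of the resulting function of $p/k^2$, so that a single small constant $C$ covers all regimes of $(k,n,p)$.
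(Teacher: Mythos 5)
Your proof is correct and follows essentially the same route as the paper's: the same conditional second-moment computation starting from (\ref{expression_lmu2}), the same separation of the leading factor $\exp(n\langle a,b\rangle_p)$ from an $e^{Cn\rho^4}$ error controlled by $\rho^2\le C/\sqrt{n}$, and the same Rademacher--hypergeometric--binomial chain for the intersection term (the paper delegates that last step to Baraud's arguments, which are precisely your convexity domination). Your local execution is in fact slightly cleaner: you obtain the exact formula $\left[(1-c)^2-\rho^4\right]^{-n/2}$ by diagonalizing a bivariate Gaussian where the paper lower-bounds a $4\times 4$ determinant, and your expansion $-\log(1-y)\le y+y^2$ is valid for both signs of $c$, whereas the paper's intermediate pointwise bound of the form $(1-w)^{-n/2}\le e^{nw}$ is only justified for $w\ge 0$ (harmless there, since the averaged inequality still follows from the correct pointwise bound).
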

In this lemma, we have specifically distinguished the integration with respect
to ${\bf X}$ from the integration with respect to ${\bf Y}$. This
will be useful for deriving minimax lower bound in fixed design (Proposition
\ref{prte_minoration_test_fixed}).
Gathering Lemmas \ref{lemma_le_cam_method} and
\ref{lemma_majoration_design_standard}
 allows to derive that
\begin{eqnarray*}
 (\rho_R^*[k,I_p])^2\geq
C\left[\frac{k}{n}\log\left(1+
\frac{p}{k^2}\right)\wedge \frac{1}{\sqrt{n}}\right]\ .
\end{eqnarray*}
This last bound allows to conclude since $p\geq n^2$.

\end{proof}

\begin{proof}[Proof of Lemma \ref{lemma_majoration_design_standard}]
 
Let us fix $m_1$, $m_2$, $\xi^{(1)}$ and $\xi^{(2)}$. First, we shall compute the
expectation ~\\$\mathbb{E}_{\bf X}[\exp(\langle {\bf X}\theta_{m_1,\xi^{(1)}},{\bf
X}\theta_{m_2,\xi^{(2)}}\rangle_n)]$. \\

Let us  decompose the set $m_1\cup m_2$ into 
four sets (which possibly are empty): $m_1\setminus m_2$, $m_2\setminus m_1$,
$m_3$, and $m_4$, where $m_3$ and $m_4$ are defined by $m_3  :=  \{j\in m_1\cap m_2|\xi^{(1)}_j =\xi^{(2)}_j \}$ and $m_4  :=  \{j\in m_1\cap m_2|\xi^{(1)}_j =-\xi^{(2)}_j \}$ .
For the sake of simplicity, we reorder the elements of 
$m_1\cup m_2$ from $1$ to $|m_1\cup m_2|$ such that the first elements 
belong to $m_1 \setminus m_2$, then to $m_2\setminus 
m_1$ and so on.
\begin{eqnarray*}
 \mathbb{E}_{\bf X}\left[\exp\left(\langle {\bf X}\theta_{m_1,\xi^{(1)}},{\bf
X}\theta_{m_2,\xi^{(2)}}\rangle_n\right)\right] &=&  
\left[\int_{\mathbb{R}^p}(2\pi)^{-p/2}\exp\left(-\sum_{i=1}^p t_i^2/2+
\sum_{1\leq i,j\leq
p}[\theta_{m_1,\xi^{(1)}}]_{i}[\theta_{m_2,\xi^{(2)}}]_{j}t_it_j\right)\prod_{i=1}^p
dt_i\right]^n\\ 
& = & \left|I_{|m_1\cup m_2|} - \lambda^2C\right|^{-n/2}\ ,
\end{eqnarray*}
where $I_{|m_1\cup m_2|}$ is the identity matrix of size $|m_1\cup m_2|$ and
$C$ is block symmetric matrix of size  $|m_1\cup m_2|$ defined by
\begin{eqnarray*} 
C :=  
\left[\begin{array}{cccc}  
1&  0  & 1& 1
\\ 0 &1&1&1
\\  1&1 &2 & 0  
\\ 1& 1 & 0 &
-2
\end{array}\right] \, .
\end{eqnarray*} 
Each block corresponds to one of the four previously defined subsets of $m_1\cup
m_2$ 
(i.e. $m_1 \setminus m_2$, $m_2 \setminus m_1$, $m_3$, and $m_4$). The matrix
$C$ 
is of rank at most four. Hence, $I_{|m_1\cup m_2|} - \lambda^2C$ has
the same determinant as the matrix $D$ of size $4$ defined by:
\begin{eqnarray*} 
D :=  
\left[\begin{array}{cccc}  
1- \frac{\lambda^2}{n}|m_1\setminus m_2|&  0  & - \frac{\lambda^2}{n}|m_3|& -
\frac{\lambda^2}{n}|m_4|
\\ 0 &1- \frac{\lambda^2}{n}|m_2\setminus m_1|&- \frac{\lambda^2}{n}|m_3|&-
\frac{\lambda^2}{n}|m_4|
\\  - \frac{\lambda^2}{n}|m_1\setminus m_2|&- \frac{\lambda^2}{n}|m_2\setminus
m_1| &1-2 \frac{\lambda^2}{n}|m_3| & 0  
\\ - \frac{\lambda^2}{n}|m_1\setminus m_2|& - \frac{\lambda^2}{n}|m_2\setminus
m_1| & 0 &
1+2\frac{\lambda^2}{n}|m_4|
\end{array}\right] \, .
\end{eqnarray*} 
After some computations, we lower bound the determinant of $D$
\begin{eqnarray*}
|D| &\geq &1- 2(2|m_3|-|m_1\cap m_2|)\lambda^2- 
8\rho^4\  \ .
\end{eqnarray*}
From now on, we assume that $\rho^2\leq 1/20$ so that $|D|\geq 1/2$. Hence, we
get
\begin{eqnarray}
 \mathbb{E}_{\bf X}[\exp(\langle {\bf X}\theta_{m_1,\xi^{(1)}},{\bf
X}\theta_{m_2,\xi^{(2)}}\rangle_n)]&\leq& \left[1- 2(2|m_3|-|m_1\cap
m_2|)\lambda^2- 
8\rho^4\right]^{-n/2}\nonumber\\
 &\leq & \exp\left(8n\rho^4\right)\exp\left[2n\lambda^2(2|m_3|-|m_1\cap
m_2|)\right]\label{expression_vraisemblance}\ . 
\end{eqnarray}
Then, we take the expectation with respect to $\xi^{(1)}$, 
$\xi^{(2)}$, $m_1$ and $m_2$. When $m_1$ and $m_2$ are fixed the expression
(\ref{expression_vraisemblance}) depends on $\xi^{(1)}$ and $\xi^{(2)}$ only through
the cardinality of $m_3$. As $\xi^{(1)}$ and $\xi^{(2)}$ follow independent
Rademacher distributions, the random variable $2|m_3|-|m_1\cap m_2|$ follows the
distribution of $Z$, a sum of $|m_1\cap m_2|$ independent Rademacher variables
and
\begin{eqnarray}\label{majoration_principale_risque}
\mathbb{E}_{{\bf X}}\left[\mathbb{E}_{0_p,1}\left\{\left. L^2_{\mu_{\rho}}\left({\bf Y},{\bf
X}\right)\right|{\bf X}\right\}\right]\leq
\exp\left(8n\rho^4\right)\mathbb{E}_{Z}\left[
\exp\left(2n\lambda^2Z\right)\right] \ ,
\end{eqnarray}
where $\mathbb{E}_Z$ stands for the expectation with respect to $Z$.
We now proceed as in the proof of Theorem 1 in  Baraud~\cite{baraudminimax} in
order to upper bound the term
\begin{eqnarray*}
\mathbb{E}_{Z}\left[
\exp\left(2n\lambda^2Z\right)\right] &= &
\binom{p}{k}^{-2}\sum_{m_1,m_2\in\mathcal{M}(k,p)}\cosh\left(2n\lambda^2\right)^
{
|m_1\cap m_2|}\ . 
\end{eqnarray*}
Following Baraud's arguments, we get that $\mathbb{E}_Z\left[
\exp\left(2n\lambda^2Z\right)\right]\leq \sqrt{1+\eta^2}$ when
\begin{eqnarray*}
 \rho^2\leq C \frac{k}{n}\log\left(1+ \frac{p}{k^2}\vee
\sqrt{\frac{p}{k^2}}\right)\ .
\end{eqnarray*}
Moreover, we have $\exp(8\rho^4n)\leq \sqrt{1+\eta^2}$ as soon as
$\rho^2\leq C/\sqrt{n}$\ since $\eta \geq 0.94$. Gathering these observations with
(\ref{majoration_principale_risque}), we conclude that
$\mathbb{E}_{\bf X}\left[\mathbb{E}_{0_p,1}\{\left. L^2_{\mu_{\rho}}\left({\bf Y},{\bf
X}\right)\right| {\bf X}\}\right]\leq 1+\eta^2$ as soon as
\begin{eqnarray*}
 \rho^2\leq C \left[\frac{k}{n}\log\left(1+ \frac{p}{k^2}\vee
\sqrt{\frac{p}{k^2}}\right)\wedge \frac{1}{\sqrt{n}}\right]\ .
\end{eqnarray*}
\end{proof}

\subsection{Proof of the lower bound (\ref{borne_minoration_test}) in Theorem \ref{thrm_minimax_testing}}

\begin{proof}[Proof of (\ref{borne_minoration_test}) in Theorem \ref{thrm_minimax_testing}]

 Consider the Condition
 $${\bf
(A.1)}\hspace{6.5cm}\frac{k}{n}\log\left(\frac{p}{e^3k^2}\right)\geq 2\
.\hspace{6.5cm}$$
We deduce Theorem \ref{thrm_minimax_testing} from the following result.
\begin{lemma}\label{lemma_minimax_testing}
Suppose that $\alpha+\delta\leq 53\%$. We have
\begin{eqnarray}\label{eq_minoration_erreur}
\beta^R_{I_p,\alpha}\left(\left\{\theta_0\in\Theta[k,p], \|\theta_0\|_p^2=\rho^2\right\} \right)\geq \delta\
,
\end{eqnarray}
for any $\rho^2>0$ such that 
\begin{eqnarray}\label{minoration_test_petitedimension}
\rho^2\leq 
\frac{k}{2n}\log\left(1+\frac{p}{k^2} \right)\ .
\end{eqnarray}
If we assume that Condition ({\bf A.1}) holds, (\ref{eq_minoration_erreur}) holds for any $\rho>0$
such that 
\begin{eqnarray}\label{minoration_test}
\rho^2\leq  -1
+\left(\frac{p}{2ek}\right)^{\frac{k}{n}}(4k)^{-2/n}\ .
\end{eqnarray}
\end{lemma}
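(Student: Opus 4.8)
The plan is to run the Ingster--Le Cam scheme of Lemma~\ref{lemma_le_cam_method}, the key new ingredient---compared with the known-variance computation of Lemma~\ref{lemma_majoration_design_standard}---being that for an unknown variance one is free to choose the null variance $\sigma_0$ different from the variance $\sigma$ used under the alternative. By homogeneity I set $\sigma=1$ and keep the same prior $\mu_\rho$ as in Lemma~\ref{lemma_majoration_design_standard}, namely the law of $\theta_{\hat m,\xi}=\sum_{j\in\hat m}\lambda\xi_j e_j$ with $\lambda=\rho/\sqrt k$, which is supported on $\{\|\theta_0\|_p^2=\rho^2\}$. I would compare $\mathbb{P}_{\mu_\rho,1}$ with the null $\mathbb{P}_{0_p,\sigma_0}$ for the inflated value $\sigma_0^2=1+\rho^2$; this matches the marginal second moment of $\mathbf{Y}$ under the two hypotheses, which is precisely the discrepancy a variance-free test cannot exploit. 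By the remark following Lemma~\ref{lemma_le_cam_method} it then suffices to prove $\mathbb{E}_{0_p,\sigma_0}[L^2_{\mu_\rho}(\mathbf{Y},\mathbf{X})]\le 1+\eta^2$, with $\eta=2(1-\alpha-\delta)\ge 0.94$, for $\rho^2$ in the two stated ranges.

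The second moment would be computed by the same two-stage integration as in Lemma~\ref{lemma_majoration_design_standard}. Since under a Gaussian design the law of $\mathbf{X}$ depends neither on $\theta_0$ nor on the variance, its density cancels in $L_{\mu_\rho}$, and expanding the square introduces two independent draws $\theta^{(1)},\theta^{(2)}\sim\mu_\rho$. Integrating first over $\mathbf{Y}\sim\mathcal{N}(0_n,\sigma_0^2 I_n)$ is a coordinatewise Gaussian integral whose normalising constants collapse thanks to the choice $\sigma_0^2=1+\rho^2$, leaving $[(1+\rho^2)^2/(1+2\rho^2)]^{n/2}\exp(g(\mathbf{X}))$ with $g$ a quadratic form in $\mathbf{X}\theta^{(1)},\mathbf{X}\theta^{(2)}$. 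Integrating over the design then factorises over the $n$ rows into a rank-two Gaussian moment $|I_p-M|^{-1/2}$, which by Sylvester's identity equals a $2\times 2$ determinant and evaluates to $|I_p-M|=(1+\rho^2-s)^2/(1+2\rho^2)$, where $s=(\theta^{(1)})^T\theta^{(2)}$. All the $\rho$-dependent prefactors cancel and one is left with the clean expression
\[
\mathbb{E}_{0_p,\sigma_0}\!\left[L^2_{\mu_\rho}\right]=\mathbb{E}_{m_1,m_2,\xi}\!\left[\Big(1-\tfrac{\lambda^2 Z}{1+\rho^2}\Big)^{-n}\right],
\]
where $Z=2|m_3|-|m_1\cap m_2|$ is the signed overlap, a sum of $|m_1\cap m_2|$ independent Rademacher variables exactly as in Lemma~\ref{lemma_majoration_design_standard}.

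The remaining and most delicate step is to bound this expectation by $1+\eta^2$. Writing $\kappa=\lambda^2/(1+\rho^2)$, so that $\kappa|Z|\le\rho^2/(1+\rho^2)<1$, I would linearise the singular factor through the Gamma representation $(1-x)^{-n}=\Gamma(n)^{-1}\int_0^\infty t^{n-1}e^{-t}e^{tx}\,dt$; the Rademacher average then factorises into $\cosh(t\kappa)^{|m_1\cap m_2|}$, and averaging over the uniform $m_1,m_2$ produces $\mathbb{E}[\cosh(t\kappa)^W]$ with $W$ hypergeometric---exactly the quantity controlled in Baraud~\cite{baraudminimax}. Inserting Baraud's hypergeometric bound and carrying out the $t$-integral splits into the two regimes: for $\rho$ small the factor is close to $\exp(n\kappa Z)$ and one recovers a Baraud-type threshold of order $\tfrac{k}{n}\log(1+p/k^2)$, giving (\ref{minoration_test_petitedimension}); for $\rho$ large (Condition~({\bf A.1})) the expectation is dominated by the full-overlap, equal-sign configuration, whose combinatorial weight is of order $\binom{p}{k}^{-1}$ and whose value is $(1+\rho^2)^n$, so that the binding constraint is of the form $(1+\rho^2)^n\lesssim\binom{p}{k}$, i.e. $1+\rho^2\lesssim (p/k)^{k/n}$. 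The crux of the argument---and the source of the blow-up of the unknown-variance separation distance---is to verify that this extreme configuration genuinely dominates the whole sum and to track the combinatorial constants sharply enough to reach the explicit threshold (\ref{minoration_test}).
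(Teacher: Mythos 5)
Your setup is sound and is, up to an exact rescaling, the paper's own: the paper takes null variance $1$ and alternative variance $\sigma^2=(1+\rho^2)^{-1}$ with $\|\theta_0\|_p^2=\rho^2/(1+\rho^2)$, which is precisely your pair $(\sigma_0^2=1+\rho^2,\ \sigma^2=1,\ \|\theta_0\|_p^2=\rho^2)$ after multiplying $\mathbf{Y}$ by $\sqrt{1+\rho^2}$; both choices match the marginal variance of $Y$ under the two hypotheses, and both are legitimate instantiations of Lemma \ref{lemma_le_cam_method} with $\sigma\neq\sigma_0$. Your second-moment formula is also correct (your rank-two determinant computation checks out), and it differs from the paper's only in that you keep Rademacher signs in the prior, so your $Z$ is the signed overlap, whereas the paper drops the signs for this lemma and obtains the hypergeometric overlap $|m_1\cap m_2|$ directly --- an inessential difference, since the signs can only reduce the second moment.

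The genuine gap is the last step, which is where essentially all of the work in the paper's proof lies. You write that the crux is ``to verify that this extreme configuration genuinely dominates the whole sum and to track the combinatorial constants sharply enough'' --- and you stop there; that verification \emph{is} the content of the lemma, and in particular you never indicate where Condition ({\bf A.1}) enters, although the second threshold (\ref{minoration_test}) is false without it. In the paper, the hypergeometric overlap is first replaced by a binomial variable via Aldous's representation and Jensen's inequality (convexity of $x\mapsto(1-cx)^{-n}$); the small-$\rho$ threshold (\ref{minoration_test_petitedimension}) then follows from $(1-x)^{-n}\le e^{nx/(1-x)}$, $W\le k$, and the binomial Laplace transform; the threshold (\ref{minoration_test}) requires a term-by-term control over all overlap sizes $1\le i\le k$, split at $i=\lfloor k/2\rfloor$, using the binomial tail bound of Lemma \ref{lemma_concentration}, and Condition ({\bf A.1}) is invoked exactly to guarantee that among the overlaps $i\le k/2$ the term $i=1$ is the worst one, so that the sum of intermediate-overlap contributions stays below $\eta^2$. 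Your Gamma-representation/cosh route is plausible, but after expanding $\bigl(1+\tfrac{k}{p}(\cosh(t\kappa)-1)\bigr)^k$ and integrating in $t$ you would face exactly the same sum over overlap sizes and need the same tail estimates and the same appeal to ({\bf A.1}); nothing is saved, and as written your dominant-term claim is an assertion rather than a proof.
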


If $p\geq k^{3}\vee C$ and  $k\log(p)/n\geq C_1$ with $C$ and $C_1$ large
enough, then Assumption $({\bf A.1})$ is satisfied. For $C$ large enough,
the quantity $k\log(p)/\log(k)$ is large enough so that the lower bound
(\ref{minoration_test}) satisfies  
\begin{eqnarray*}
  -1
+\left(\frac{p}{2ek}\right)^{\frac{k}{n}}(4k)^{-2/n}&\geq&
-1+\exp\left[C\frac{k}{n}\log\left(p\right)\right]\geq
C_1\frac{k}{n}\log\left(p\right)
\exp\left[C_2\frac{k}{n}\log\left(p\right)\right]\ .
\end{eqnarray*}
Let us now assume that $p\geq k^{3}\vee C$ and $k\log(p)/n\leq C_1$ where
$C_1$ has been previously fixed. Then,
the first
lower bound (\ref{minoration_test_petitedimension}) satisfies:
\begin{eqnarray*}
 \frac{k}{2n}\log\left(1+\frac{p}{k^2}\right)\geq
C_1\frac{k}{n}\log\left(p\right)
\exp\left[C_2\frac{k}{n}\log\left(p\right)\right]\ .
\end{eqnarray*}
Gathering the two previous lower bounds with Lemma \ref{lemma_minimax_testing}
allows to conclude.
\end{proof}

\begin{proof}[Proof of Lemma \ref{lemma_minimax_testing}]
Consider some $\rho>0$. To apply Lemma \ref{lemma_le_cam_method}, we first have
to define a suitable prior 
$\mu_{\rho}$ on $\theta_0$ and a suitable $\sigma^2$. More specifically, we set $\sigma^2=(1+\rho^2)^{-1}$ and the
distribution $\mu_{\rho}$ is supported by
 $\Theta[k,p,\rho]$ defined by
\begin{eqnarray*}
 \Theta[k,p,\rho]&:=& \left\{\theta_0\in\Theta[k,p]\ ,\
\|\theta_0\|_p^2
=\frac{\rho^2}{1+\rho^2}\right\}\ .
\end{eqnarray*}

 Let $\hat{m}$ be a random variable
uniformly 
distributed over $\mathcal{M}(k,p)$. Let $\mu_{\rho}$ be the 
distribution of the random variable 
$\widehat{\theta}= \sum_{j\in\hat{m}} \lambda e_j$
where 
$$  \lambda^2 := \frac{\rho^2}{k(1+\rho^2)}\ ,$$
and where $(e_j)_{1\leq j\leq p}$ is the orthonormal family of vectors
of 
$\mathbb{R}^{p}$ defined by $ (e_j)_i =1$ if  $i=j$ and 
$(e_i)_j=0$ otherwise. 
By  Lemma \ref{lemma_le_cam_method}, we only have to prove under conditions (\ref{minoration_test_petitedimension}) or (\ref{minoration_test}) with (${\bf A.1}$), we have
\begin{equation}\label{majoration_moment22}
 \mathbb{E}_{0_p,1}(L_{\mu_{\rho}}^2({\bf Y},{\bf X}))\leq
1+\eta^2\ .
\end{equation}
Observe here that we use a variance $1$ for ${\bf H_0}$ and a variance
$1-\|\theta_0\|_p^2$ for the hypothesis ${\bf H_1}$. Using these two different
variances allows us to take advantage of the fact that we work under unknown
variance.

As a specific case of  \cite{villers1} Eq.(8.5), we have
\begin{eqnarray*} 
\mathbb{E}_{0_p,1}(L_{\mu_{\rho}}^2({\bf Y},{\bf X})) & = &
\binom{p}{k}^{-2}\sum_{m_1,m_2\in\mathcal{M}(k,p)}\left(1-\frac{\rho^2|m_1\cap m_2|}{(1+\rho^2)k }\right)^{-n}\\
&= & \mathbb{E}_{Z}\left[\left(1-\frac{\rho^2Z}{(1+\rho^2)k }\right)^{-n}\right]\ ,
\end{eqnarray*} 
where $Z$ follows an hypergeometric distribution with parameters $p$, $k$, and $k/p$.
We know from 
Aldous (p.173) \cite{aldous85} that $Z$ follows the same distribution as 
the random variable $\mathbb{E}(W|\mathcal{B}_p)$ where $W$ is a binomial random 
variable of parameters $k$, $k/p$ and $\mathcal{B}_p$ some suitable 
$\sigma$-algebra. By a convexity argument, we get
\begin{eqnarray}\label{majoration_moment2}
 \mathbb{E}_{Z}\left[\left(1-\frac{\rho^2Z}{(1+\rho^2)k}
\right)^{-n}\right]\leq
\mathbb{E}_{W}\left[\left(1-\frac{\rho^2W}{(1+\rho^2)k}
\right)^{-n}\right]\ .
\end{eqnarray}
Hence, we only need to upper bound the expectation of the second random
variable. 

\noindent 
{\bf CASE 1}: Proof of Equation (\ref{minoration_test_petitedimension}).
Since $\log(1+x)\leq x$ and since $W\leq k$, we have 
\begin{eqnarray*}
\mathbb{E}_{W}\left[\left(1-\frac{\rho^2W}{(1+\rho^2)k}
\right)^{-n}\right]&\leq& \mathbb{E}_{W}\left[\exp\left(\frac{n\rho^2W/k}{1+\rho^2-\rho^2W/k}\right)\right]\leq \mathbb{E}_{W}\left[\exp\left(n\rho^2W/k\right)\right]\\
 & \leq & \left[1+ \frac{k}{p}\left(e^{n\rho^2/k}-1\right)\right]^k\leq \exp\left[\frac{k^2}{p}(e^{n\rho^2/k}-1)\right]\ .
\end{eqnarray*}
As a consequence, the condition (\ref{majoration_moment22}) holds if $\rho^2\leq \frac{k}{n}\log\left[1+\frac{p}{k^2}\log(1+\eta^2)\right]$. Observe that $\log(1+\eta^2)\geq 0.6$. Since $\log(1+ux)\geq u\log(1+x)$ for any $0<u<1$ and any $x>0$, the last condition is enforced by $\rho^2\leq \frac{k}{2n}\log\left[1+\frac{p}{k^2}\right]$.\\

\noindent
{\bf CASE 2}: Proof of Equation (\ref{minoration_test}). Here, we bound (\ref{majoration_moment2}) under condition $({\bf A.1})$.
We have
\begin{eqnarray*}
\mathbb{E}_{W}\left[\left(1-\frac{\rho^2W}{(1+\rho^2)k}\right)^{-n}
-1\right]&\leq&
\sum_{i=1}^{k}\mathbb{P}\left[W\geq
i\right]\left(1-\frac{\rho^2 i}{(1+\rho^2)k} \right)^{-n} \ .
\end{eqnarray*}
Since we need to ensure that
$\mathbb{E}_{W}[\{1-\rho^2 W/((1+\rho^2)k)\}^{-n}-1]\leq \eta^2$, it is
sufficient to prove that
\begin{eqnarray}\label{majoration_partie1}
\mathbb{P}\left[W\geq
i\right]\left(1- \frac{i}{k}\right)^{-n} &\leq&
\frac{\eta^2 i^{-i}}{4} \text{ for any } 1\leq i\leq \lfloor k/2\rfloor\ , \\
\mathbb{P}\left[W\geq
i\right]\left(1- \frac{\rho^2i}{(1+\rho^2)k}\right)^{-n}& \leq &
\frac{\eta^2}{2k} \text{
for any } \lfloor k/2\rfloor +1\leq i\leq  k \ . \label{majoration_partie2}
\end{eqnarray}

In order to prove these bounds, we shall use a deviation inequality of the
random variable $W/k$.

\begin{lemma}\label{lemma_concentration}
For any $k\geq 1$, $0<x\leq 1$, it holds that  
\begin{eqnarray}\label{inegalite_concentration}
 \mathbb{P}\left[\frac{W}{k}\geq
x\right]\leq \left[\left(\frac{k}{px}\right)^x\frac{1}{(1-x)^{1-x}}\right]^k\ .
\end{eqnarray}

\end{lemma}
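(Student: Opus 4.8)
The plan is to recognise Lemma~\ref{lemma_concentration} as a standard Chernoff (relative-entropy) bound for the upper tail of a Binomial variable, followed by one crude relaxation that turns the sharp entropy bound into the stated closed form. Write $q:=k/p$, so that $W=\sum_{j=1}^{k}B_{j}$ with the $B_{j}$ i.i.d.\ Bernoulli$(q)$, and the event $\{W/k\geq x\}$ is $\{W\geq kx\}$.

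First I would apply the exponential Markov inequality: for any $t>0$,
\[
\mathbb{P}\left[W\geq kx\right]\leq e^{-tkx}\,\mathbb{E}\left[e^{tW}\right]=\left[e^{-tx}\left(1-q+qe^{t}\right)\right]^{k},
\]
using the product form of the Binomial moment generating function. The next step is to minimise the bracket over $t>0$. Differentiating $g(t):=-tx+\log(1-q+qe^{t})$ and solving $g'(t)=0$ gives $e^{t}=x(1-q)/\{q(1-x)\}$, which is admissible (that is, $t>0$) exactly when $x>q$. Substituting this value back yields $1-q+qe^{t}=(1-q)/(1-x)$ and $e^{-tx}=\{q(1-x)/(x(1-q))\}^{x}$, so the minimised bracket equals $(q/x)^{x}\,(1-x)^{x-1}\,(1-q)^{1-x}$. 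Raising to the power $k$ produces the sharp entropy bound
\[
\mathbb{P}\left[W\geq kx\right]\leq \left(\frac{q}{x}\right)^{kx}\left(\frac{1-q}{1-x}\right)^{k(1-x)}.
\]

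The last step is a relaxation: since $0\leq 1-q\leq 1$ and $k(1-x)\geq 0$, we have $(1-q)^{k(1-x)}\leq 1$, and substituting $q=k/p$ then gives precisely the claimed inequality~(\ref{inegalite_concentration}). It remains to dispose of the regime $x\leq q$, in which the optimisation above fails because the stationary point has $t\leq 0$. There I would argue that the right-hand side of~(\ref{inegalite_concentration}) is already at least $1$, so the bound is trivial: its logarithm divided by $k$ equals $x\log(q/x)-(1-x)\log(1-x)$, whose derivative in $q$ is $x/q>0$, and whose value at $q=x$ is $-(1-x)\log(1-x)\geq 0$; hence it stays nonnegative for all $q\geq x$.

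I do not expect a genuine obstacle here, as every estimate is elementary. The only points requiring care are checking that the stationary point of the Chernoff exponent is a valid ($t>0$) minimiser precisely when $x>q$, and consequently treating the complementary regime $x\leq q$ by the triviality argument above rather than by optimisation; this case split is the single place where one must be attentive.
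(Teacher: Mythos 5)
Your proof is correct and follows essentially the same route as the paper's: both apply the exponential Markov (Chernoff) bound to the Binomial$(k,k/p)$ variable $W$, optimize the exponent at $e^{t}=x(1-q)/\{q(1-x)\}$ with $q=k/p$, and then discard the factor $(1-q)^{k(1-x)}\leq 1$ to reach the stated form. Your explicit treatment of the regime $x\leq q$ (where the stationary point has $t\leq 0$ and the bound is instead trivial because the right-hand side exceeds $1$) is actually a point the paper glosses over; conversely, at the endpoint $x=1$ your stationary-point formula degenerates and one should argue directly, as the paper does, that $\mathbb{P}(W=k)=(k/p)^k$ matches the claimed bound.
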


\noindent
{\bf FACT 1.} 
For  any $1\leq i\leq \lfloor k/2\rfloor$, the upper bounds
(\ref{majoration_partie1}) hold under Condition (${\bf A.1}$).\\

\noindent
{\bf FACT 2.} The upper bound (\ref{majoration_partie2}) holds for any $\lfloor
k/2\rfloor +1\leq i\leq  k$ as soon as 
\begin{eqnarray}\label{conclusion_preuve_minoration_principale}
\rho^2\leq -1 +
\left(\frac{p}{2ek}\right)^{k/n}\left(\frac{\eta^2}{2k}\right)^{2/n}\ .
\end{eqnarray}
We derive that under (\ref{conclusion_preuve_minoration_principale}), we have
$\mathbb{E}_{0_p,1}[L_{\mu_{\rho}}^2({\bf Y},{\bf X})]\leq
1+\eta^2\ .$
The fact that $\eta^2\geq 1/2$ allows to conclude.

\end{proof}

\begin{proof}[Proof of FACT 1]

Since $\log(1-x)\geq -x/(1-x)$ for any $0\leq x < 1$, we derive that
$(1-x)^{1-x}\geq e^{-x}$. Gathering this bound with Lemma
\ref{lemma_concentration}, we get a new deviation inequality for $W$.
\begin{eqnarray}\label{concentration_x_petit}
 \mathbb{P}\left[\frac{W}{k}\geq
x\right]\leq \left(\frac{ke}{px}\right)^{xk}\ ,
\end{eqnarray}
for any $x<1$. We apply this bound with $x=i/k$. Then, Inequality
(\ref{majoration_partie1}) holds if
\begin{eqnarray*}
\left(\frac{k^2e}{p}\right)^{i/n}\left(\frac{4}{\eta^2}\right)^{1/n}\leq
1-\frac{i}{k}\ .
\end{eqnarray*}
Taking the logarithm of this expression leads to 
\begin{eqnarray*}
-\frac{i}{n}\log\left(\frac{p}{ek^2}\right)+\frac{1}{n}\log\left(4/\eta^2
\right)+ \frac{i/k}{1-i/k}\leq  0\ ,
\end{eqnarray*}
Since $i$ is constrained to be smaller than $k/2$, we get
\begin{eqnarray*}
-\frac{ik}{n}\log\left(\frac{p}{ek^2}\right)+\frac{k}{n}\log\left(4/\eta^2
\right)+ 2i\leq  0\ .
\end{eqnarray*}
By Assumption ({\bf A.1}), $k/n\log[p/(ek^2)]$ is larger than $2$.
Consequently, the worst case among all $i$ between $1$ and $k/2$ is $i=1$.
Hence, we only need to prove that
\begin{eqnarray*}
\frac{k}{n}\left[\log\left(\frac{p}{k^2}\right)-
\log\left(\frac{4e}{\eta^2}\right)\right]\geq 2\ .
\end{eqnarray*}
Since $\eta$ is larger than $0.94$, $\log(4e/\eta^2)$ is smaller than $3$ and
this last inequality is ensured by Assumption ({\bf A.1}).

\end{proof}

\begin{proof}[Proof of FACT 2]

We consider  here the case  $1/2 < i/k\leq 1$. We derive from
(\ref{concentration_x_petit}) that
\begin{eqnarray*}
\mathbb{P}\left[W\geq
i\right]\leq   
\left(\frac{2ek}{p}\right)^{i}\ .
\end{eqnarray*}
Consequently, we want to ensure that
\begin{eqnarray*}
   \left(\frac{2ek}{p}\right)^{i/n}\left(\frac{2k}{\eta^2}\right)^{1/n}\leq
\left(1- \frac{\rho^2i}{(1+\rho^2)k}\right)\ ,
\end{eqnarray*}
for any $i$ between $\lfloor k/2\rfloor$ and $k$.
For any $x$ and $u$ between $0$ and $1$, $(1-x)^u\leq (1-xu)$. Setting $u=i/k$
and $x=\rho^2/(1+\rho^2)$, we obtain that the last inequality holds if
\begin{eqnarray*}
1-\frac{\rho^2}{1+\rho^2}\geq \sup_{\lfloor k/2\rfloor \leq i\leq k}
\left(\frac{2ek}{p}\right)^{k/n}\left(\frac{2k}{\eta^2}\right)^{k/(in)}
\end{eqnarray*}
Since $2k/\eta^2$ is positive, the largest term in the bound corresponds to
$i=k/2$. Hence, it remains to prove that
\begin{eqnarray*}
\frac{1}{1+\rho^2}\geq
\left(\frac{2ek}{p}\right)^{k/n}\left(\frac{2k}{\eta^2}\right)^{2/n}
\end{eqnarray*}
We conclude that the upper bounds  hold  if 
\begin{eqnarray*}
\rho^2\leq -1 +
\left(\frac{p}{2ek}\right)^{k/n}\left(\frac{\eta^2}{2k}\right)^{2/n}\ .
\end{eqnarray*}

\end{proof}

\begin{proof}[Proof of Lemma \ref{lemma_concentration}]
We prove this deviation inequality using the Laplace transform of $W/k$.
Consider some $x\in (0,1)$ and $\lambda>0$. 
\begin{eqnarray*}
 \log\left[\mathbb{P}\left\{\frac{W}{k}\geq x\right\}\right]&\leq &
-\lambda x+ \log\left[ \mathbb{E}_{W}\left\{\exp(\lambda W/k)\right\}\right]
 \leq  -\lambda
x+k\log\left[1+\frac{k}{p}\left(\exp\left(\frac{\lambda}{k}
\right)-1\right)\right] \ .
\end{eqnarray*}
Deriving with respect to $\lambda$ an upper bound of the last expression leads
to the following choice
$$e^{\lambda^*/k }=\frac{x}{1-x}\left(\frac{p}{k}-1\right) \ .$$
Hence, we get
\begin{eqnarray*}
 \log\left[\mathbb{P}\left\{\frac{W}{k}\geq x\right\}\right]\leq
-kx\log\left[\frac{x}{1-x}\left(\frac{p}{k}-1\right)\right]+k\log\left[
\frac{1-k/p}{1-x}\right]\ .
\end{eqnarray*}
Since we assume  $x<1$, we conclude that
\begin{eqnarray*}
 \mathbb{P}\left\{\frac{W}{k}\geq
x\right\}\leq\left[\left(\frac{k}{px}\right)^x\frac{1}{(1-x)^{1-x}}\right]^k\ .
\end{eqnarray*}
Since $\mathbb{P}(W=k)=[k/p]^k$, this upper bound is also valid when
$x=1$.
\end{proof}

\subsection{Proof of Proposition \ref{prte_prediction_minoration_minimax}}
We derive this minimax lower bound from the hypothesis testing problem
$\{\theta_0=0_p\}$
studied in Section \ref{section_linear_testing}.
Since the covariance $\Sigma=I_p$, the loss
$\mathbb{E}\left[\{X(\theta_1-\theta_2)\}^2/\sigma^2\right]$
is simply $\|\theta_1-\theta_2\|^2_p/\sigma^2$. For the sake of simplicity, we
assume that $p$ is even. We split the $p$ covariates into
two groups $M_1$ and $M_2$ of size $p/2$. Given some $\rho>0$, we fix
$\sigma^2=(1+\rho^2)^{-1}$ and we consider the
two sets 
\begin{eqnarray*}
  \Theta_1[\rho] & = &\Theta[k,p]\cap\left\{\theta: \mathrm{supp}(\theta)\subset
M_1\text{
and }\|\theta\|^2_p=\frac{\rho^2}{1+\rho^2}\right\}\\
\Theta_2[\rho] &= &\Theta[k,p]\cap\left\{\theta: \mathrm{supp}(\theta)\subset
M_2\text{
and }\|\theta\|^2_p=\frac{\rho^2}{1+\rho^2}\right\}\ . 
\end{eqnarray*}
Take any estimator $\widehat{\theta}$. We consider an estimator
$\widetilde{\theta}\in \Theta_1[\rho]\cup \Theta_2[\rho]$  such that 
\begin{eqnarray*}
 \|\widetilde{\theta}-\widehat{\theta}\|_p= \min_{\theta \in \Theta_1[\rho]\cup
\Theta_2[\rho]}\|\theta-\widehat{\theta}\|_p\ .
\end{eqnarray*}
By the triangle inequality, we have
$\|\widetilde{\theta}-\theta_0\|_p\leq 2\|\widehat{\theta}-\theta_0\|_p$, for any
$\theta_0\in \Theta_1[\rho]\cup \Theta_2[\rho]$.
\begin{eqnarray}\label{minoration_minimax_preuve_prediction}
\sup_{i=1,2}\sup_{\theta_0\in\Theta_i[\rho]}
\mathbb{E}_{\theta_0,\sigma}\left[\frac{\|\widehat{\theta}-\theta_0\|_p^2}{\sigma^2}\right]\geq
\frac{\rho^2}{4}\sup_{i=1,2}\sup_{\theta_0\in\Theta_i[\rho]
}\mathbb{P}_{\theta_0,\sigma}[\supp(\widetilde { \theta } )\nsubseteq M_i]\ .
\end{eqnarray}
It is enough to prove that for $\rho^2= C_1\frac{k}{n}\log\left(p\right)
\exp\{C_2\frac{k}{n}\log\left(p\right)\}$,  the supremum of
the probabilities 
$\mathbb{P}_{\theta_0,\sigma}[\supp(\widetilde { \theta } )\nsubseteq M_i]$ is lower
bounded by a positive constant.
This  is equivalent to lower bounding the minimax separation distance for 
${\bf H_0}:$ $\{\theta_0\in \Theta_1[\rho]\text{ and } \sigma^2=(1+\rho^2)^{-1}\}$  against ${\bf H_1}$: $\{\theta_0\in \Theta_2[\rho]\text{ and }\sigma^2=(1+\rho^2)^{-1}\} $.\\

As in the proof of Theorem \ref{thrm_minimax_testing}, we build a prior 
distribution $\mu_{1,\rho}$ on $\theta_0$. Consider the
collection $\mathcal{M}_1(k)$ of subsets of $M_1$ of size $k$. Let $\hat{m}$
be be some random
variable uniformly distributed over $\mathcal{M}_1(k)$.
 Then, 
$\mu_{1,\rho}$ is the distribution  of 
$\widehat{\theta}=
\sum_{j\in\hat{m}} \rho/\sqrt{k(1+\rho^2)}e_j$.
Similarly, we define the prior distribution $\mu_{2,\rho}$ on $\Theta_2[\rho]$.
We note $\mathbb{P}_{\mu_{i,\rho},\sigma}= \int \mathbb{P}_{\theta_0,\sigma}d{\mu_{i,\rho}}$.
We
have
\begin{eqnarray}
\sup_{i=1,2}\sup_{\theta_0\in\Theta_i[r]
}\mathbb{P}_{\theta_0}[\supp(\widetilde { \theta } )\nsubseteq M_i] &\geq &
 1 - \frac{1}{2}\|\mathbb{P}_{\mu_{1,\rho},\sigma}-\mathbb{P}_{\mu_{2,\rho},\sigma}\|_{TV}\ .\nonumber\\
&\geq & 1-  \|\mathbb{P}_{\mu_{1,\rho},\sigma}-\mathbb{P}_{0_p,1}\|_{TV}\
,\label{minoration_test_prediction_preuve}
\end{eqnarray}
by the triangle inequality. Lemma \ref{lemma_le_cam_method} states 
that$$
\|\mathbb{P}_{\mu_{1,\rho}}-\mathbb{P}_{0_p,1}\|_{TV}\leq \mathbb{E}_{0_p,1}\left[L^2_{\mu_{1,\rho}}-1\right]\ ,$$
where $L_{\mu_{1,\rho}}=
\text{d}\mathbb{P}_{\mu_{1,\rho},\sigma}/\text{d}\mathbb{P}_{0_p,1}$.
In fact, the second moment of  $L_{\mu_{1,\rho}}$ has been studied in the proof
of Theorem \ref{thrm_minimax_testing}. If we take $\alpha+\delta=53\%$ in
this proof, we derive 
$$\mathbb{E}_0\left[L^2_{\mu_{1,\rho}}\right]\leq 1.9\ ,\quad\quad\text{
if }\frac{\rho^2}{1-\rho^2}\leq
C_1k\log(p)/n\exp(C_2k\log(p)/n)\,\text{ and if }\, p\geq k^3\vee C_3\ .$$

\noindent
Gathering this result with Equations
(\ref{minoration_minimax_preuve_prediction}) and
(\ref{minoration_test_prediction_preuve}) allows to conclude.

\subsection{Proof of Proposition \ref{prte_adaptation_impossible}}
Let us set
$\alpha=\delta=0.01$.
Consider a design ${\bf X}$ that achieves the bound
(\ref{borne_minoration_test_fixed_unknown}) and take $\rho=\rho^*_{F,U}[k,{\bf
X}]/2$. If $k\log(p)/n$ is large enough, then $\rho\geq \sqrt{2}$.
Take any estimator $\widehat{\theta}$ that does not rely on the variance
$\sigma^2$. Let us build a test $T$ of the
hypotheses 
${\bf H_0}$:\ $\{\theta_0=0\text{ and } \sigma>0\}$ against ~\\${\bf H_1}$:
$\{{\theta_0\in\Theta[k,p]} \text{ and } \sigma>0,\ \|{\bf X}\theta_0\|_n^2/(n\sigma^2)\geq
\rho^2\}$:
\begin{eqnarray*}
 T = \left\{\begin{array}{ccc}
      0&\text{ if }& 2\|{\bf X}\widehat{\theta}\|_n^2<\|{\bf Y}\|_n^2\\
1 & \text{ if }& 2\|{\bf X}\widehat{\theta}\|_n^2\geq \|{\bf Y}\|_n^2
     \end{array}\right.
\end{eqnarray*}
By Proposition \ref{prte_minoration_minimax_test_fixed_unknown_variance}, we
have at least one of the two following properties:
\begin{eqnarray}\label{mauvais_niveau}
\sup_{\sigma>0} \mathbb{P}_{0_p,\sigma}(T=1)\, &\geq& \alpha\\
\sup_{\sigma>0,\ \theta_0\in\Theta[k,p],\ \|{\bf X}\theta_0\|_n^2/(n\sigma^2)\geq
\rho^2} \mathbb{P}_{\theta_0,\sigma}(T=0)&\geq&
\delta\label{mauvaise_puissance}
\end{eqnarray}

\noindent 
{\bf CASE 1}: (\ref{mauvais_niveau}) holds. We have $\mathbb{P}_{0_p,\sigma}[\|{\bf Y}\|_n^2\geq n\sigma^2/2]\geq 1-e^{-n/16}$ for any $\sigma>0$. Thus, there exists
$\sigma>0$ such that $\|{\bf X}\widehat{\theta}\|_n^2\geq n\sigma^2/4$ with
probability larger than
$\alpha/2-e^{-n/16}$. As a consequence, we have 
$$\sup_{\sigma>0}\mathbb{E}_{0_p,\sigma}\left[\|{\bf
X}(\widehat{\theta}-\theta_0)\|_n^2/[n\sigma^2]\right]\geq C\ .$$

\vspace{0.3cm}

\noindent
{\bf CASE 2}: (\ref{mauvaise_puissance}) holds. The random
variable $\|{\bf Y}\|_n^2/\sigma^2$ follows a noncentral $\chi^2$ distribution
with $n$ degrees of freedom  and a non centrality parameter $\|{\bf
X}\theta_0\|_n^2/\sigma^2$. By Lemma 1 in Birg\'e~\cite{birge01}, we have
$\|{\bf Y}\|_n^2\leq 3/2 \left[n\sigma^2+\|{\bf X}\theta_0\|_n^2\right]\ ,$
with probability larger than $1-e^{-Cn}$. Consequently, there exist $\sigma>0$
and $\theta_0\in\Theta[k,p]$ such that $\|{\bf X}\theta_0\|_n^2/(n\sigma^2)\geq
\rho^2$ and
$$\|{\bf X}\widehat{\theta}\|_n^2/(n\sigma^2)\leq
\frac{3}{4}\left[1+\|{\bf
X}\theta_0\|_n^2/(n\sigma^2)\right]\leq \frac{7}{8}\|{\bf
X}\theta_0\|_n^2/(n\sigma^2) ,$$
with probability $\delta/2-e^{-Cn}$, since $\rho^2\geq 2$. Thus, we get
\begin{eqnarray*}
 \mathbb{E}_{\theta_0,\sigma}\left[\|{\bf
X}(\widehat{\theta}-\theta_0)\|_n^2/n\right]&\geq& 
\mathbb{E}_{\theta_0,\sigma}\left[\left(\|{\bf
X}\widehat{\theta}\|_n- \|{\bf
X}\theta_0\|_n \right)^2/n\right]\\ &\geq& C \|{\bf
X}\theta_0\|_n^2/n\ \geq C \rho^2\sigma^2\ .
\end{eqnarray*}

\subsection{Proof of Proposition \ref{prte_minoration_variance}}
For the sake of conciseness, we note 
$l(\widehat{\sigma},\sigma)= |\widehat{\sigma}^2/\sigma^2-\sigma^2/\widehat{\sigma}^2|$. 
Given a positive number $\rho$, we note $\sigma_0=(1+\rho^2)^{-1/2}$. As in the proof of Theorem \ref{thrm_minimax_testing}, we consider the prior probability $\mu_{\rho}$ on $\Theta[k,p]$. For any estimator $\widehat{\sigma}>0$, we define $\widetilde{\sigma}$ by 
$\widetilde{\sigma}\in\arg\min_{\sigma\in\{1,\sigma_0\}}l(\widehat{\sigma},\sigma)$. 
For any $\sigma\in\{1,\sigma_0\}$, the loss   $l(\widehat{\sigma},\sigma)$ is controlled as follows:
$$l(\widehat{\sigma},\sigma)\geq \mathbf{1}_{\widetilde{\sigma}\neq \sigma}l(1,\sqrt{\sigma_0})\ .$$
Thus, we get the minimax lower bound
\begin{eqnarray}
\nonumber\lefteqn{\inf_{\widehat{\sigma}} \sup_{\sigma>0,\ \theta_0\in\Theta[k,p]}\mathbb{E}_{\theta_0,\sigma}\left[l(\widehat{\sigma},\sigma)\right] }&&\\&\geq&  \inf_{\widehat{\sigma}>0}\max\left[\mathbb{E}_{0_p,1}\left\{l(\widehat{\sigma},1)\right\};\mathbb{E}_{\mu_{\rho},\sigma_0}\left\{l(\widehat{\sigma},\sigma_0)\right\}\right] \nonumber\\ & \geq & l(1,\sqrt{\sigma_0})
\inf_{\widetilde{\sigma}\in \{1,\sigma_0\}}\max\left[\mathbb{P}_{0_p,1}[\widetilde{\sigma}\neq 1];\mathbb{P}_{\mu_\rho,\sigma_0}[\widetilde{\sigma}\neq \sigma_0]\right] \nonumber\\
&\geq &\frac{l(1,\sqrt{\sigma_0})}{2}\left[1-\frac{\|\mathbb{P}_{0_p,1}-\mathbb{P}_{\mu_\rho,1}\|_{TV}}{2}\right]\nonumber \\
&\geq &\frac{\rho^2}{2\sqrt{1+\rho^2}}\left[1-\frac{1}{2}\left(\mathbb{E}_{0_p,1}[L_{\mu_{\rho}}^2({\bf Y},{\bf X})]-1\right)^{1/2}\right]\label{eq_principale}\ . 
\end{eqnarray}
Let us note two numbers $\eta_1= 1.5$ and $\eta_2=1.8$. If ${\bf X}$ is a standard  Gaussian design and if $k\leq p^{1/3}$, then  the proof of Theorem \ref{thrm_minimax_testing} states for 
$$\rho^2\leq C_1\frac{k}{n}\log\left(\frac{p}{k}\right)\exp\left[C_2\frac{k}{n}\log\left(\frac{p}{k}\right)\right]\ ,$$ we have
$\mathbb{E}_{0_p,1}[L_{\mu_{\rho}}^2({\bf Y},{\bf X})]\leq 1+\eta_1^2$ where the expectation is taken both with respect to ${\bf Y}$ and ${\bf X}$. Applying Markov's inequality, we derive that  with positive probability,
$$\mathbb{E}_{0_p,1}[L_{\mu_{\rho}}^2({\bf Y},{\bf X})|{\bf X}]\leq 1+\eta_2^2\ . $$
For such designs ${\bf X}$ and such $\rho$ we have
\begin{eqnarray*}
\inf_{\widehat{\sigma}} \sup_{\sigma>0,\ \theta_0\in\Theta[k,p]}\mathbb{E}_{\theta_0,\sigma}\left[l(\widehat{\sigma},\sigma)\right]\geq C\frac{\rho^2}{\sqrt{1+\rho^2}}\geq C'\left(\rho\wedge \rho^2\right)\ ,
\end{eqnarray*}
since $\rho^2/\sqrt{1+\rho^2}\geq (\rho\wedge \rho^2)/\sqrt{2}$. We conclude that
\begin{equation*}
\inf_{\widehat{\sigma}} \sup_{\sigma>0,\ \theta_0\in\Theta[k,p]}\mathbb{E}_{\theta_0,\sigma}\left[l(\widehat{\sigma},\sigma)\right]\geq C'_1\frac{k}{n}\log\left(\frac{p}{k}\right)\exp\left[C'_2\frac{k}{n}\log\left(\frac{p}{k}\right)\right]\  .
\end{equation*}

\subsection{Fano's Lemma}

The next lower bounds are established applying Birg\'e's version of Fano's Lemma~\cite{birgelemma}. More precisely, we shall use the following lemma, which is taken from Corollary 2.19 in~\cite{massartflour},

\begin{lemma}\label{lemme_fano}
Let $(S,d)$ be some pseudo metric space, $\{\mathbb{P}_s,s\in\mathcal{S}\}$ be some statistical model. Let us note $\kappa=2e/(2e+1)$. Then, for any estimator $\widehat{s}$ and any finite subset $\mathcal{C}$ of $\mathcal{S}$, setting $\delta=\min_{s,t\in\mathcal{C},\ s\neq t}d(s,t)$, provided that $\max_{s,t}\mathcal{K}(\mathbb{P}_s,\mathbb{P}_t)\leq \kappa \log|\mathcal{C}|$ the following lower bound holds for any $p\geq 1$:
$$\sup_{s\in\mathcal{C}}\mathbb{E}_s\left[d^p(s,\widehat{s})\right]\geq 2^{-p}\delta^p(1-\kappa)\ .$$
 
\end{lemma}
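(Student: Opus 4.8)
The plan is to reduce the estimation lower bound to a classification problem over the finite set $\mathcal{C}$ and then to invoke the combinatorial form of Fano's inequality due to Birg\'e. Writing $\mathcal{C}=\{s_0,\ldots,s_N\}$ so that $|\mathcal{C}|=N+1$, the first step is to turn an arbitrary estimator $\widehat{s}$ into a $\mathcal{C}$-valued estimator by projection: set
\[
\widetilde{s}\in\arg\min_{t\in\mathcal{C}}d(\widehat{s},t)\ ,
\]
with ties broken arbitrarily in a measurable way. The events $A_s:=\{\widetilde{s}=s\}$, $s\in\mathcal{C}$, are then pairwise disjoint since $\widetilde{s}$ takes a single value, which is precisely the input required by Birg\'e's lemma.

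The second step is a standard separation argument. By definition of $\widetilde{s}$ we have $d(\widehat{s},\widetilde{s})\leq d(\widehat{s},s)$ for every $s\in\mathcal{C}$; hence, on the event $\{\widetilde{s}\neq s\}$, where $\widetilde{s}$ and $s$ are two distinct points of $\mathcal{C}$, the triangle inequality yields
\[
\delta\leq d(s,\widetilde{s})\leq d(s,\widehat{s})+d(\widehat{s},\widetilde{s})\leq 2\,d(s,\widehat{s})\ ,
\]
so that $d(s,\widehat{s})\geq \delta/2$ whenever $\widetilde{s}\neq s$. Combining this with Markov's inequality gives, for each $s\in\mathcal{C}$,
\[
\mathbb{E}_s\left[d^p(s,\widehat{s})\right]\geq \left(\frac{\delta}{2}\right)^p\mathbb{P}_s\left[\widetilde{s}\neq s\right]\ ,
\]
and taking the supremum over $s\in\mathcal{C}$ reduces the problem to lower bounding $\max_{s\in\mathcal{C}}\mathbb{P}_s[\widetilde{s}\neq s]=\max_{s\in\mathcal{C}}\mathbb{P}_s[A_s^c]$.

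The final step is to apply Birg\'e's version of Fano's inequality to the disjoint events $(A_s)_{s\in\mathcal{C}}$. Under the hypothesis $\max_{s,t}\mathcal{K}(\mathbb{P}_s,\mathbb{P}_t)\leq\kappa\log|\mathcal{C}|$ with $\kappa=2e/(2e+1)$, that lemma guarantees $\min_{s\in\mathcal{C}}\mathbb{P}_s[A_s]\leq\kappa$, equivalently $\max_{s\in\mathcal{C}}\mathbb{P}_s[A_s^c]\geq 1-\kappa$. Plugging this into the previous display produces
\[
\sup_{s\in\mathcal{C}}\mathbb{E}_s\left[d^p(s,\widehat{s})\right]\geq \left(\frac{\delta}{2}\right)^p(1-\kappa)=2^{-p}\delta^p(1-\kappa)\ ,
\]
which is the claim. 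The genuinely hard part is the combinatorial inequality of Birg\'e itself: the explicit constant $\kappa=2e/(2e+1)$ arises from optimizing a Kullback--Leibler / convex-duality bound relating $\min_s\mathbb{P}_s[A_s]$ to $\max_{s,t}\mathcal{K}(\mathbb{P}_s,\mathbb{P}_t)$ and $\log|\mathcal{C}|$. I would quote this step from \cite{massartflour} rather than reprove it, since the nearest-neighbor reduction above is where all the estimation-specific content lies and the KL hypothesis plugs in verbatim.
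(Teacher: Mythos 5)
Your proof is correct: the nearest-neighbor projection onto $\mathcal{C}$, the triangle-inequality separation giving $d(s,\widehat{s})\geq\delta/2$ on the event $\{\widetilde{s}\neq s\}$, Markov's inequality, and the application of Birg\'e's lemma to the pairwise disjoint events $(A_s)_{s\in\mathcal{C}}$ combine to give exactly the stated bound $2^{-p}\delta^p(1-\kappa)$. The paper does not actually prove this lemma itself — it quotes it verbatim as Corollary 2.19 of \cite{massartflour} — and your argument is precisely the standard proof of that corollary (resting on Birg\'e's combinatorial form of Fano's lemma, i.e. Corollary 2.18 of \cite{massartflour}, which the paper cites elsewhere), so your route coincides with the one the paper implicitly relies on.
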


\subsection{Proof of the lower bounds of Propositions \ref{prte_minoration_inverse_fixe} and \ref{prte_minimax_inverse_random}}

This lower bound is based on Fano's lemma. For the sake of
simplicity, we assume that $2k\leq p$ and that $\sigma^2=1$. First, we consider
a unit vector $\theta\in\Theta[2k,p]$ such that $\|{\bf X}\theta\|_n^2=
\varPhi_{2k,-}({\bf X})$.
Let us define $\kappa=2e/(2e+1)$. It is possible to find two vectors
$(\theta_1,\theta_2)\in\Theta[k,p]$ such that $\theta_1-\theta_2= \theta\sqrt{2\kappa\log(2)/\varPhi_{2k,-}({\bf X})}$ and $\mathrm{supp}(\theta_1)\cap
\mathrm{supp}(\theta_2) =\emptyset$. Consequently, the Kullback distance
$\mathcal{K}(\theta_1,\theta_2)$ between the two distributions
$\mathbb{P}_{\theta_1}$ and $\mathbb{P}_{\theta_2}$ is exactly $\kappa\log(2)$ and $\|\theta_1-\theta_2\|_p^2= 2\kappa\log(2)/\varPhi_{2k,-}({\bf X})$.
Applying Lemma \ref{lemme_fano}, we derive the first
part of the lower bound:
 $$\mathcal{RI}_F[k,{\bf X}]\geq C
\frac{1}{\varPhi_{2k\wedge p,-}\left({\bf X}\right)}\ .$$

~\\ Let us turn to the second part of the lower bound. We consider $\mathcal{M}(k,p)$ the
collections of subsets of $\{1,\ldots ,p\}$ of size $k$. 
Applying combinatorial results such as Varshamov's lemma and Lemma 4.10 in \cite{massartflour}, we derive that there exists $\mathcal{M}'(k,p)\subset \mathcal{M}(k,p)$ of size larger than $\exp[Ck\log(ep/k)]$ such that any pairs of distinct sets $m_1$, $m_2$ in $\mathcal{M}'(k,p)$, we have $|m_1\cap m_3|\leq 3k/4$.

For any $m\in\mathcal{M}'(k,p)$, we
define a vector $\theta_{m}$ that satisfies:
\begin{itemize}
 \item  $|(\theta_{m})_i|=1/\sqrt{k}$
if $i\in m$ and $0$ else. 
\item $\|{\bf X}\theta_m\|_n^2\leq \varPhi_{1,+}({\bf X})$. 
\end{itemize}
Let us prove that this construction is possible by induction on $k$. The construction is  straightforward for $k=1$. Assume that this construction is possible for $k-1$. Let us take some subset $m\in\mathcal{M}(k,p)$ and $m'\subset m$ such that $|m'|=k-1$. There exists a vector $\theta$ such that $\textrm{supp}(\theta)=m'$, $|(\theta)_i|=1/\sqrt{k}$ for any $i\in m'$ and $\|{\bf X}\theta\|_n^2\leq \varPhi_{1,+}({\bf X})(k-1)/k$. Now consider the two vectors $\theta_1$ and $\theta_2$ such that $(\theta_1)_i=(\theta_2)_i=\theta_i$ if $i\in m'$, $(\theta_1)_i=-(\theta_2)_i=1/\sqrt{k}$ if $i\in m\setminus m'$ and $(\theta_1)_i=-(\theta_2)_i=0$ else. It follows that 
$\|{\bf X}\theta_1\|_n^2\leq \varPhi_{1,+}({\bf X})$ or $\|{\bf X}\theta_2\|_n^2\leq \varPhi_{1,+}({\bf X})$, which allows to conclude.

For any $r>0$, we consider the set $\mathcal{C}'_k[r]:=\{r\theta_m\ ,\ m\in \mathcal{M}'(k,p)\}$. The Kullback distance between any two element $\theta_1\neq\theta_2$ in $\mathcal{C}'_k[r]$ is upper bounded as follows:
\begin{eqnarray*}
 \mathcal{K}(\theta_1,\theta_2) = \frac{\|{\bf
X}(\theta_1-\theta_2)\|_n^2}{2\sigma^2}\leq 2\varPhi_{1k,+}({\bf X})\frac{r^2}{\sigma^2}\ ,
\end{eqnarray*}
while we have $\|\theta_1-\theta_2\|_p^2\geq r^2/2$.
Applying Birg\'e's version of Fano's lemma~\cite{birgelemma} we conclude that:
\begin{eqnarray*}
\inf_{\widehat{\theta}}\sup_{\theta_0\in
\mathrm{Conv}[\mathcal{C}_k^p(\sqrt{k}r)]}
\mathbb{E}_{\theta_0,\sigma}\left[\|{\bf X}(\widehat{\theta}-\theta_0)\|_n^2/n\right]	\geq
C\left[r^2\wedge
\frac{k(1+\log(p/k))}{\varPhi_{1k,+}({\bf X})}\sigma^2\right]\ ,
\end{eqnarray*}
where $\mathrm{Conv}[A]$ stands for the convex hull of $A$. Taking
$r^2=k[1+\log(p/k)]\sigma^2/\varPhi_{2k,+}({\bf X})$ allows to conclude.

The proof of the minimax lower bound (\ref{minoration_inverse_design_random_rask}) in Proposition \ref{prte_minimax_inverse_random} follows exactly the same steps. The minimax lower bound (\ref{eq_risque_minimax_inverse_random}) is a consequence of (\ref{minoration_inverse_design_random_rask}) and the fact that $\varPhi_{1,+}(\sqrt{\Sigma})=1$ for any $\Sigma\in \mathcal{S}_p$.

\subsection{Proof of Proposition
\ref{cor_minoration_inverse_dimension_pas_trop_grande}}

{\bf Proof of the first result}.
First, the minimax lower bound is a straightforward consequence of (\ref{minoration_inverse_design_fixe}), since $\varPhi_{1,+}({\bf X})=n$ if ${\bf X}\in \mathcal{D}_{n,p}$. Let us turn to the upper bound. Thanks to the minimax upper bound (\ref{minoration_inverse_design_fixe_rask}), we only have to prove that there exists a design ${\bf X}$ such that its $2k$-restricted eigenvalues remain close from $n$.

Consider a standard Gaussian design ${\bf W}$ of size $n\times p$. 
Rescaling to a norm of  $\sqrt{n}$ each column of ${\bf W}$, we get a design ${\bf X}\in\mathcal{D}_{n,p}$.
Let us  assume that $k[1+\log(p/k)]\leq \{4(1+\sqrt{2})\}^{-2}n$.
Applying Lemma \ref{lemma_concentration_vp_wishart}, we control the restricted
eigenvalues of ${\bf W}$:
\begin{eqnarray*}
\varPhi_{2k,+}({\bf W}/\sqrt{n})\leq (7/4)^2\text{ and }
 \varPhi_{2k,-}({\bf W}/\sqrt{n})\geq (1/4)^2\ ,
\end{eqnarray*}
with probability larger than $1-\exp(-n/32)$.
Consider any $\theta\in\Theta[2k,p]$ such that $\|\theta\|_p=1$. By definition
of ${\bf X}$, there exists some $\theta'\in\Theta[2k,p]$ such that ${\bf
X}\theta={\bf W}\theta'$. Moreover we have $$\|\theta'\|_p^2\geq \varPhi_{1,+}^{-1}({\bf
W}/\sqrt{n})\ .$$ Hence, we derive that
$$\varPhi_{2k,-}({\bf X})\geq
\varPhi_{2k,-}({\bf
W})\varPhi_{1,+}^{-1}({\bf
W}/\sqrt{n})\ .$$
Thus, we have  $\varPhi_{2k,-}({\bf X})\geq n/49$ with positive probability.\\

\noindent 
{\bf Proof of the second result}.
Let ${\bf X}$ be a design in $\mathcal{D}_{n,p}$.
Take $\delta\in (0,1]$.  Let us consider the collection $\mathcal{M}(k,p)$ (defined in Section \ref{section_notation}). As explained in the proof of  Proposition \ref{prte_minoration_inverse_fixe}, there exists $\mathcal{M}'(k,p)\subset \mathcal{M}(k,p)$ of size larger than $\exp[Ck\log(ep/k)]$ such that any pairs of distinct sets $m_1$, $m_2$ in $\mathcal{M}'(k,p)$, we have $|m_1\cap m_3|\leq 3k/4$.

For any
$m\in\mathcal{M}'(k,p)$, we
define a vector $\theta_{m}$ such that  $|(\theta_{m})_i|=1/\sqrt{k}$
if $i\in m$ and $0$ else and that $\|{\bf X}\theta_m\|_n^2\leq n$. Such a construction is justified in the proof of Proposition \ref{prte_minoration_inverse_fixe}.

For any $m_1\neq m_2$ in $\mathcal{M}'(k,p)$, we have
$\|\theta_{m_1}-\theta_{m_2}\|_p^2\geq 1/2$. If there exist two distinct sets $(m_1,m_2)\in
\mathcal{M}'(k,p)$ such that $\|{\bf X}(\theta_{m_1}-\theta_{m_2})\|_n^2\leq
n\delta^2$, then the design ${\bf X}$ satisfies
$\varPhi_{2k,-}({\bf X})\leq 2n\delta^2$. A necessary condition for ${\bf X}$
to satisfy
$\varPhi_{2k,-}({\bf X})\geq 2n\delta^2$ is therefore that the vectors ${\bf
X}\theta_m$ are $\sqrt{n}\delta$-separated.  \\

If ${\bf X}$ satisfies
$\varPhi_{2k,-}({\bf X})\geq 2n\delta^2$, then the balls in $\mathbb{R}^n$ with radius
$\sqrt{n}\delta$ centered at ${\bf X}\theta_m$  are all disjoint. Thus, the sum of their volumes, 
 is smaller than the volume of a ball a radius $\sqrt{n}(1+\delta)$ in $\mathbb{R}^n$. This implies that
$\delta\leq
2(k/ep)^{Ck/n}$. Hence, for any design ${\bf X}$ with unit
columns,
we have
$$\varPhi_{2k,-}({\bf X})\leq C_1\left(\frac{k}{ep}\right)^{C_2k/n}\ ,$$
which allows to prove the second result.\\

\noindent 
{\bf Proof of the third result}. The minimax lower bound is direct consequence of (\ref{minoration_inverse_design_fixe}) and (\ref{minoration_valeur_propre_restreinte_design_fixe}). In order to finish the proof, we shall combine the minimax upper bound (\ref{minoration_inverse_design_fixe_rask}) with an upper bound of 
$\inf_{{\bf
X}\in\mathcal{D}_{n,p}} \varPhi^{-1}_{2k,-}({\bf X})$. Consider a
standard Gaussian design ${\bf X}$ with size $n\times p$. Applying the
deviation inequality (\ref{majoration_wishart_queue_distribution}) of Lemma
\ref{lemma_concentration_vp_wishart}, we derive that with probability larger than $1-1/p$, we have
$$\varPhi^{-1}_{2k,-}({\bf X})\leq n
C_1\left(\frac{p}{k}\right)^{C_2k/n}\left[\frac{k}{n}
\log\left(\frac{p}{k}\right)\vee 1\right]\ .$$
However, the design ${\bf X}$ does not belong to
$\mathcal{D}_{n,p}$.  This is why we consider ${\bf X'}= {\bf X}D^{-1}$,
where $D$ is a diagonal matrix of size $p$, whose $l$-th diagonal element
corresponds to the norm of the $l$-th column of ${\bf X}/\sqrt{n}$.
Obviously, 
${\bf X'}$ belongs to $\mathcal{D}_{n,p}$.
\begin{eqnarray*}
 \varPhi_{2k,-}({\bf X}')=\inf_{\theta\in\Theta[k,p]}\frac{\|{\bf
X'}\theta\|_n^2}{\|\theta\|_{p}^2}=\inf_{\theta\in\Theta[k,p]}\frac{\|{
\bf
X}\theta\|_n^2}{\|D\theta\|_{p}^2} \geq 
\frac{\varPhi_{2k,-}({\bf
X})}{\varphi_{\max}(D^2)}\ ,
\end{eqnarray*}
Each diagonal element of $nD^2$ follows of $\chi^2$ distribution with $n$
degrees of freedom. Applying Lemma \ref{lemma_concentration_chi2}, we derive that
$\varphi_{\max}(D)\leq C\sqrt{1\vee  \log(p)/n}$ with probability larger than $1-1/p$. We conclude that
\begin{eqnarray*}
\varPhi^{-1}_{2k,-}({\bf X}')&\leq&
C_1n\left(\frac{p}{k}\right)^{C_2k/n)}\left[\frac{k}{n}
\log\left(\frac{p}{k}\right)\vee 1 \right]\left[1\vee \frac{\log(p)}{n}\right]\\
 &\leq & C'_1n\left(\frac{p}{k}\right)^{C'_2k/n)}\ .
\end{eqnarray*}
with probability larger than $1-2/p$. This allows to conclude.

\subsection{Proof of Proposition
\ref{prte_minoration_support_estimation}}
For the sake of simplicity, we assume that $\sigma^2=1$.
Consider a design ${\bf X}\in\mathcal{D}_{n,p}$.
By the proof of Proposition \ref{cor_minoration_inverse_dimension_pas_trop_grande}, there exist
two vectors $\theta_1$ and $\theta_2$ such that:
\begin{enumerate}
 \item  $\theta_1$ and
$\theta_2$
contain exactly $k$ non-zero components which are all equal to $1/\sqrt{k}$ in absolute value. 
\item The Hamming distance between $\theta_1$ and $\theta_2$ is larger than $k/2$.
\item  $\|{\bf
X}(\theta_1-\theta_2)\|_n^2\leq C_1n\exp\left[-C_2k/n\log(ep/k)\right]:=\rho^{*-2}$.
\end{enumerate}

Let us set $\theta^*_1=C\rho^*\theta_1$ and $\theta^*_2= C\rho^*\theta_2$ with
$C=4\log(2)e/(2e+1)$. Consequently, the Kullback discrepancy between
$\mathbb{P}_{\theta^*_1}$ and $\mathbb{P}_{\theta^*_2}$ is smaller than $\log(2)2e/(2e+1)$.
Consider an estimator $\widehat{\theta}$ taking its values in
$\{\theta_1^*,\theta_2^*\}$.
Applying Corollary 2.18 in \cite{massartflour} (which is another version of Fano's Lemma), we derive that
$\inf_{\theta_0\in\{\theta_1^*,\theta_2^*\}} \mathbb{P}_{\theta_0,1}(\widehat{\theta}=\theta_0)\leq
2e/(2e+1)$. This allows to conclude.

\subsection{Proof of Proposition \ref{prte_minoratoin_subset_estimation}}
For the sake of simplicity, we assume that $\sigma^2=1$ and that $p$
is even.
Consider any estimator $\widehat{M}$ of size $p_0$. We set 
\begin{equation}\label{eq_rho}
\rho^2=
\frac{C_1}{2}\frac{k}{n}\log(p)\exp\left[\frac{C_2}{2}\frac{k}{n}\log(p)\right]
\end{equation}
 where the constants $C_1$, $C_2$ correspond to
the ones used at the end of the proof of
Proposition \ref{prte_prediction_minoration_minimax}. We also consider the set
$\mathcal{C}_k^p(\rho)$. Suppose
that we have
\begin{eqnarray}\label{hypothese_controle_support}
\sup_{\theta_0 \in
\mathcal{C}_k^p(\rho)}\mathbb{P}_{\theta_0,1}[\mathrm{supp}(\theta_0)\subset
\widehat{M}]\geq
7/8\ .
\end{eqnarray}

Assume we are given a second $n$-sample of ($Y,X$) independent of the first one.
We note (${\bf Y}',{\bf X}'$) this new sample. We consider the estimator
$\widetilde{\theta}_k$ defined by
$$\widetilde{\theta}_k:=\arg\min_{\theta\in\Theta[k,p]\text{ and
}\mathrm{supp}(\theta)\subset \widehat{M}}\|{\bf Y}'-{\bf
X}'\theta\|_n^2\ .$$
Since $\Sigma=I_p$, all the covariates that do
not lie in the support of $\theta_0$ play a symmetric role in the distribution of
(${\bf Y},{\bf X}$). This estimator
$\tilde{\theta}_k$ has the same form as the estimator $\widehat{\theta}_k$
introduced in (\ref{definition_moindre_carre_sparse}).
Arguing as in the proof of  Theorem \ref{thrm_prediction_majoration_minimax}, we
derive that 
\begin{eqnarray*}
 \|\widetilde{\theta}_k-\theta_0\|_p^2\mathbf{1}_{\mathrm{supp}(\theta_0)\subset
\widehat{M}}\leq C'_1 k\log\left(\frac{ep_0}{k}\right)
\exp\left[C'_2\frac{k}{n}\log\left(\frac{
ep_0} { k } \right)\right ] \ ,
\end{eqnarray*}
with probability larger than $7/8$. Gathering this bound with
(\ref{hypothese_controle_support}), we derive that for any $\theta_0\in
\mathcal{C}_k^p(\rho)$, we have
\begin{eqnarray}\label{controle_estimateur_tres_grande_dimension}
 \|\widehat{\theta}_k-\theta_0\|_p^2\leq C'_1
\frac{k}{n}\log\left(\frac{ep_0}{k}\right)\exp\left[C'_2\frac{k}{n}\log\left(\frac{
ep_0} { k } \right)\right ] ,
\end{eqnarray}
with probability larger than $3/4$.\\

We shall prove that (\ref{controle_estimateur_tres_grande_dimension}) is
impossible if $p_0$ is too large.
Let us  split the $p$ covariates into two groups $M_1$ and $M_2$. We
consider the subsets $\mathcal{C}_{k,1}^p(\rho)$ (resp. 
$\mathcal{C}_{k,2}^p(\rho)$)  of $\mathcal{C}_{k}^p(\rho)$
whose elements have their support in $M_1$ (resp. $M_2$). Arguing as in
(\ref{minoration_minimax_preuve_prediction}) and
(\ref{minoration_test_prediction_preuve}), we derive that for any
estimator $\widehat{\theta}$, there exists $\theta_0\in
\mathcal{C}_{k,1}^p(\rho)\cup \mathcal{C}_{k,2}^p(\rho)$ such that 
\begin{eqnarray*}
 \|\widehat{\theta}-\theta_0\|_p^2\geq \frac{\rho^2}{4}=\frac{C_1}{8}\frac{k}{n}\log(p)\exp\left[\frac{C_2}{2}\frac{k}{n}\log(p)\right]\ ,
\end{eqnarray*}
with probability larger than $1/4$. Here, the constants $C_1$ and $C_2$ are the same as in (\ref{eq_rho}).  \\

The last lower bound
contradicts (\ref{controle_estimateur_tres_grande_dimension}) is
$\log(p_0)/\log(p)\leq \delta$, where $\delta>0$ depends on the relative values of $C_1$, $C_2$, $C'_1$, and $C'_2$ in (\ref{eq_rho}) and (\ref{controle_estimateur_tres_grande_dimension}).

\section{Procedures involved in the proofs of the minimax upper bounds}\label{section_proof_upperbound}

\subsection{Testing procedures}

\subsubsection{Known variance: test  $T^*_{\alpha}$}\label{section_testetoile}

In order to establish the minimax upper bounds for known variance, we consider the
following testing procedure. It is taken from Baraud~\cite{baraudminimax} who
applies it in the Gaussian sequence model. In the sequel, $\bar{\chi}_k(u)$ denotes the probability for a $\chi^2$ distribution with $k$ degrees of freedom to
be larger than $u$. Given a subset $m$ of $\{1,\ldots ,p\}$, $ \Pi_{m}$ refers to the
orthogonal projection onto the space generated
by the vectors $({\bf X}_i)_{i\in m}$.

\begin{defi}{\bf [Procedure
$T^*_{\alpha}$]}\label{definition_test_variance_connue}
 Define $k^*$ as the smallest integer such that
$k^*[1+\log(p/k^*)]\geq
\sqrt{n}$.  For any $1\leq
k<k^*$, we define the statistics $T^*_{\alpha,k}$  by
\begin{eqnarray*}
 T^*_{\alpha,k}:= \sup_{m\in\mathcal{M}(k,p)}\|\Pi_m{\bf Y}\|_n^2-
\sigma^2 \bar{\chi}^{-1}_k\big[\alpha/{\scriptstyle \binom{p}{k}}\big]\ ,
\end{eqnarray*}
where $\mathcal{M}(k,p)$ is defined in Section \ref{section_notation}.  We also consider $$T^*_{\alpha,n}:=
\|{\bf Y}\|_n^2- \sigma^2 \bar{\chi}^{-1}_n(\alpha)\ .$$ The procedure
$T^*_{\alpha}$ is defined by
\begin{eqnarray}\label{definition_test_fixed_design_variance_connue}
 T^*_{\alpha}=\left[\vee_{1\leq k<
k^*}T^*_{\alpha/(2k^*),k}\right]\vee T^*_{\alpha/2,n}\ .
\end{eqnarray}
The hypothesis ${\bf H_0}$ is rejected if $T^*_{\alpha}$ is positive. 
\end{defi}
$T^*_{\alpha,k}$  corresponds to a Bonferroni multiple testing procedure
based on a large number of parametric tests of the hypothesis ${\bf H_0}$:
$\{\theta_0=0_p\}$ against ${\bf H_{1,m}}$: $\{\theta_0\neq 0$ and
$\mathrm{supp}(\theta_0)\subset m\}$ for any $m\in\mathcal{M}(k,p)$. As a consequence, $T^*_{\alpha,k}$ allows to test the hypothesis 
${\bf H_0}$:$\{\theta_0=0\}$ against ${\bf H_{1,k}}$: $\{\theta_0\in\Theta[k,p]\setminus\{0_p\}\}$. Then, $T^*_{\alpha}$ corresponds to a 
Bonferroni multiple testing procedures based on the statistics $T^*_{\alpha,k}$, $k\in\{1,\ldots k^*\}\cup \{n\}$. Obviously, the procedure $T_{\alpha}^*$ is computationally intensive. It is used here  as a theoretical tool to derive minimax upper bounds.

\subsubsection{Unknown variance: test $T_{\alpha}$}\label{section_test}

We introduce a second testing procedure to handle the case of unknown
variance $\sigma^2$.

\begin{defi}{\bf [Procedure
$T_{\alpha}$]}\label{definition_definition_test_unknown_variance}
Fixing some subset $m$ of $\{1,\ldots, p\}$ such that $n-|m|>0$,
we note  $d_m({\bf X})$ the rank of the subdesign ${\bf X}_m$ of ${\bf X}$ of
size $n\times |m|$. We define the Fisher statistic $\phi_m$ by
\begin{eqnarray}\label{definition_phi} 
\phi_m({\bf Y},{\bf X}) := \frac{[n-d_m({\bf X})]\|\Pi_{m}{\bf Y} 
  \|_n^2}{d_m({\bf X})\|{\bf Y} - \Pi_{m}{\bf Y}\|^2_n}\ .
\end{eqnarray}
 We build the  statistic $T_{\alpha,k}({\bf Y},{\bf X})$ as 
\begin{eqnarray}\label{definition_test_general} 
T_{\alpha,k}:= \sup_{m\in \mathcal{M}(k,p)}\phi_m({\bf 
  Y},{\bf X}) - \bar{F}^{-1}_{d_m({\bf X}),n-d_m({\bf
X})}\big[\alpha/{\scriptstyle \binom{p}{k}}\big]\ ,
\end{eqnarray} 
 where $\bar{F}_{k,n-k}(u)$ denotes the probability for a Fisher variable with
$k$ and $n-k$ degrees of freedom to be larger than $u$.
Finally, the statistic  $T_{\alpha}$ is defined by
\begin{eqnarray}\label{definition_test_variance_inconnue}
T_{\alpha}:=\sup_{k=1,\ldots ,\lfloor n/2\rfloor}T_{\alpha/\lfloor
n/2\rfloor,k}\ .
\end{eqnarray}
The hypothesis ${\bf H_0}$ is rejected when $T_{\alpha}$ is positive.
\end{defi}

In fact, $T_{\alpha}$ is  a Bonferroni multiple testing
procedure. Contrary to $T^*_{\alpha}$, it is based on Fisher statistics to handle the
unknown variance. The
ideas underlying this statistic 
 have been introduced in Baraud et al.~\cite{baraud03} in the context of fixed design
regression.

\subsection{Estimation procedures}

\subsubsection{Definition of the estimator $\widetilde{\theta}^V$}\label{section_theta_V}

\begin{defi}{\bf
[Estimator
$\widetilde{\theta}^V$]}\label{definition_estimateur_prediction_random_design}
For any integer $k\in \{1,\ldots,p\}$,  we consider a least-squares  estimator
$\widehat{\theta}_k$ defined by
\begin{eqnarray}\label{definition_moindre_carre_sparse}
\widehat{\theta}_k  \in \arg\min_{\theta \in\Theta[k,p]}\|{\bf Y}-{\bf
X}\theta\|_n^2\ .
\end{eqnarray}
Let us define the penalty function $\pen:\ \{1,\ldots, \lfloor
(n-1)/4\rfloor\} \mapsto \mathbb{R}^+$ by
\begin{eqnarray}\label{penalite}
 \pen(k)= K\frac{k}{n}\log\left(\frac{ep}{k}\right)\ ,
\end{eqnarray}
where $K>0$ is a tuning parameter. The dimension 
$\widehat{k}^{V}$ is selected as follows
\begin{eqnarray*}
 \widehat{k}^V\in \arg\min_{1\leq k\leq \lfloor(n-1)/4\rfloor }\log\left[\|{\bf
Y}-{\bf
X}\widehat{\theta}_k\|_n^2\right] +\pen(k)\ .
\end{eqnarray*}
For short, we note $\widetilde{\theta}^V=\widehat{\theta}_{\widehat{k}^V}$. 
\end{defi}

This variable selection procedure relies on complexity penalization. The
penalty
$\pen(k)$ depends on the size of $k$ and on the number $\binom{p}{k}$ of
subsets of $\{1,\ldots,p\}$ of size $k$. Observe that the estimator
$\widetilde{\theta}^V$ does
not require the knowledge of $\sigma^2$.

The choice of the tuning parameter $K$ is universal: it neither depends on $n$,
$p$, $k$, nor on $\Sigma$, $\theta_0$, $\sigma^2$. It is only constrained to be larger than a positive numerical constant so that the equations (B.8), (B.24), (B.26), (B.31), and (B.34) in the proofs of Theorem \ref{thrm_prediction_majoration_minimax}, Propositions \ref{prte_risque_baraudgiraud} and \ref{prte_majoration_probleme_inverse_adaptation} in~\cite{technical} hold.

\subsubsection{Definition of the estimator $\widetilde{\theta}^{BM}$ and proof of (\ref{eq_adaptation}) in Proposition \ref{prte_minimax_prediction_fixed_design}}\label{section_theta_BM}
\begin{defi}{\bf [Procedure for
fixed design regression]}\label{definition_birge_massart}
Define $k^*$ as the smallest integer $k$ such that $k[1+\log(p/k)]\geq
n$. Let us consider the collection of dimensions
$\mathcal{K}:=\{1,\ldots,k^*\}\cup\{n\}$. Then, 
 the penalty function
$\textrm{pen}:\mathcal{K}\mapsto \mathbb{R}^+$ is defined by
\begin{eqnarray*}
\textrm{pen}(k) :=\left\{ \begin{array}{ccc}                  
4k\left[4 +\log\left(\frac{p}{k}\right)\right] & \text{if} &
k\leq k^*\\
2n &\text{if} & k=n\ ,
                  \end{array}\right.
\end{eqnarray*}
We recall that for $k\leq k^*$, the estimators $\widehat{\theta}_k$ are defined  in (\ref{definition_moindre_carre_sparse}) and that $\widehat{\theta}_n\in\arg\min_{\theta\in\mathbb{R}^p}\|{\bf Y}-{\bf X}\theta\|_n^2$.
The size  $\widehat{k}^{BM}$ is selected by minimizing the following penalized
criterion
\begin{eqnarray}\label{critere_birge_massart}
\widehat{k}^{BM}:=  \arg \inf_{k\in\{1,\ldots k^*\}\cup\{n\}}\|{\bf Y}-{\bf
X}\widehat{\theta}_k\|_n^2+\sigma^2\textrm{pen}(k)\ ,
\end{eqnarray}
For short, we write
$\widetilde{\theta}^{BM}=\widehat{\theta}_{\widehat{k}^{BM}}$.
\end{defi}

Observe that the estimator $\widetilde{\theta}^{BM}$ requires the knowledge of
the variance $\sigma^2$. Then, Eq. (\ref{eq_adaptation}) is a special case of Theorem 1 in Birg\'e and
Massart~\cite{massart_pente}.

\section{Deviation inequalities }

The proofs of the deviation inequalities stated in this section are postponed to Appendix C in~\cite{technical}.

\begin{lemma}[$\chi^2$ distributions]\label{lemma_concentration_chi2}
  For any integer $d>0$ and any number $0<x<1$, 
 \begin{eqnarray*}
\mathbb{P}\left(\chi^2(d) \geq d + 2\sqrt{d\log(1/x)} + 2\log(1/x) \right) &\leq
&x\ 
\nonumber\ ,\\
\mathbb{P}\left(\chi^2(d) \leq d - 2\sqrt{d\log(1/x)} \right)& \leq& x
\nonumber\ .
\end{eqnarray*}
For  any positive number $0<x<1$
\begin{eqnarray}
\mathbb{P}\left[\chi^2(d) \leq
dCx^{2/d}\right] &\leq& x 
\ ,\label{minoration_chi2_petite_proba}
\end{eqnarray}
where the constant $C=\exp(-1)$.
\end{lemma}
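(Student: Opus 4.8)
The plan is to derive all three inequalities from a single tool: the Chernoff bound combined with the Laplace transform $\mathbb{E}[\exp(\lambda\chi^2(d))]=(1-2\lambda)^{-d/2}$, valid for $\lambda<1/2$. Throughout I would write $x'=\log(1/x)$, so that $x=e^{-x'}$, which turns the stated bounds into their standard exponential form.

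The first two (two-sided) inequalities are the classical chi-square deviation bounds (see, e.g.,~\cite{massartflour}): in the $x'$ notation they read $\mathbb{P}(\chi^2(d)\geq d+2\sqrt{dx'}+2x')\leq e^{-x'}$ and $\mathbb{P}(\chi^2(d)\leq d-2\sqrt{dx'})\leq e^{-x'}$. I would either invoke them directly or reproduce the short Chernoff argument. For the upper tail, bound $\mathbb{P}(\chi^2(d)-d\geq t)\leq \exp[-\lambda(t+d)](1-2\lambda)^{-d/2}$ and use the elementary inequality $-u-\log(1-u)\leq u^2/[2(1-u)]$ with $u=2\lambda$ before optimizing. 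For the lower tail, use $\mathbb{E}[\exp(-\lambda\chi^2(d))]=(1+2\lambda)^{-d/2}$ with $\lambda>0$ together with $2\lambda-\log(1+2\lambda)\leq 2\lambda^2$, which yields the sub-Gaussian estimate $\exp(-t^2/(4d))$ and the choice $t=2\sqrt{dx'}$.

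The substantive step is the small-deviation bound (\ref{minoration_chi2_petite_proba}), for which the sub-Gaussian lower tail is too crude: its exponent $\exp(-t^2/(4d))$ saturates at $\exp(-d/4)$ as $t\to d$ and cannot reproduce the $x^{2/d}$ decay. Instead I would keep the Chernoff bound exact: for $\lambda\geq 0$, $\mathbb{P}(\chi^2(d)\leq t)\leq \exp(\lambda t)(1+2\lambda)^{-d/2}$. Optimizing over $\lambda$ gives $1+2\lambda=d/t$, admissible precisely when $t<d$; substituting $\lambda=(d-t)/(2t)$ produces the closed form $\mathbb{P}(\chi^2(d)\leq t)\leq e^{(d-t)/2}(t/d)^{d/2}\leq e^{d/2}(t/d)^{d/2}$, where the last step uses $e^{-t/2}\leq 1$. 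Taking $t=dCx^{2/d}$ with $C=e^{-1}$ makes $(t/d)^{d/2}=e^{-d/2}x$, so the right-hand side collapses to exactly $x$, as claimed; the admissibility condition $t<d$ holds since $Cx^{2/d}\leq e^{-1}<1$ for $x<1$. The one point requiring care is to resist passing to the Gaussian approximation and instead retain the exact optimizer $\lambda=(d-t)/(2t)$; everything else is elementary algebra.
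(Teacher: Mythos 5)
Your proposal is correct. The two-sided bounds are indeed the classical Laurent--Massart inequalities (Lemma 1 of their 2000 paper, also in \cite{massartflour}), and your Chernoff sketches for them are the standard derivations. The substantive part is the small-deviation bound (\ref{minoration_chi2_petite_proba}), and your treatment of it is both correct and well-motivated: you rightly observe that the sub-Gaussian lower-tail estimate $\exp(-t^2/(4d))$ saturates at $\exp(-d/4)$ and so cannot yield an $x^{2/d}$-type bound, and you instead keep the Chernoff bound exact. The algebra checks out: with $\lambda=(d-t)/(2t)$ one gets $\exp(\lambda t)(1+2\lambda)^{-d/2}=e^{(d-t)/2}(t/d)^{d/2}\leq e^{d/2}(t/d)^{d/2}$, and the choice $t=de^{-1}x^{2/d}$ makes this exactly $x$, with admissibility $t<d$ holding for all $0<x<1$. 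Note that the paper itself does not prove this lemma in the main text (the proof is deferred to Appendix C of the technical appendix \cite{technical}), so the comparison can only be with the standard arguments; your route is one of the two canonical ones, the other being direct integration of the $\chi^2$ density, $\mathbb{P}(\chi^2(d)\leq t)\leq t^{d/2}/\left[2^{d/2}\Gamma(d/2+1)\right]$, followed by the Stirling bound $\Gamma(d/2+1)\geq (d/(2e))^{d/2}$, which produces the same constant $C=e^{-1}$. Either way the lemma is established, and your write-up contains no gap.
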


\begin{lemma}[Wishart distributions]\label{lemma_concentration_vp_wishart}
Let $Z^TZ$ be a standard Wishart matrix of parameters $(n,d)$ with $n>d$. For
any number $0<x<1$, 
\begin{eqnarray}
\mathbb{P}\left[\varphi_{\max}\left( Z^TZ\right) \geq
n\left(1+\sqrt{d/n}+\sqrt{2\log(1/x)/n}\right)^2 \right]& \leq &x\ 
,\nonumber	\\
\label{majoration_wishart_sous_gaussienne}
\mathbb{P}\left[\varphi_{\min}\left(Z^TZ\right) \leq
n\left(1-\sqrt{d/n}-\sqrt{2\log(1/x)/n}\right)_+^2 \right]& \leq
&x\ .
\end{eqnarray}
For any $(n,d)$ with $n\geq 4d+1$ and any number  $0<x<1$, 
\begin{eqnarray}
\mathbb{P}\left[\varphi_{\min}\left(Z^TZ\right) \leq
nCx^{\frac{2}{n-2d}}\left[1\vee \frac{\log(2/x)}{n}\right]^{-1} \right]& \leq&
x\ ,
\label{majoration_wishart_queue_distribution}
\end{eqnarray}
where $C$ is a numerical constant.
\end{lemma}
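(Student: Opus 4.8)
The plan is to handle the two upper-tail bounds (\ref{majoration_wishart_sous_gaussienne}) and the lower-tail small-ball bound (\ref{majoration_wishart_queue_distribution}) by different tools, since the former reflect ordinary Gaussian concentration while the latter captures the behaviour of $\varphi_{\min}$ near $0$.

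For (\ref{majoration_wishart_sous_gaussienne}), I would write $Z$ as an $n\times d$ matrix with i.i.d. standard Gaussian entries, so that $\varphi_{\max}(Z^TZ)=s_{\max}(Z)^2$ and $\varphi_{\min}(Z^TZ)=s_{\min}(Z)^2$, where $s_{\max}$ and $s_{\min}$ are the largest and smallest singular values of $Z$. Both maps $Z\mapsto s_{\max}(Z)$ and $Z\mapsto s_{\min}(Z)$ are $1$-Lipschitz with respect to the Frobenius norm, so by the Gaussian concentration (Cirel'son--Ibragimov--Sudakov) inequality, $\mathbb{P}[s_{\max}(Z)\geq \mathbb{E} s_{\max}(Z)+t]\leq e^{-t^2/2}$ and $\mathbb{P}[s_{\min}(Z)\leq \mathbb{E} s_{\min}(Z)-t]\leq e^{-t^2/2}$. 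Gordon's comparison inequalities supply the expectations $\mathbb{E} s_{\max}(Z)\leq \sqrt{n}+\sqrt{d}$ and $\mathbb{E} s_{\min}(Z)\geq \sqrt{n}-\sqrt{d}$. Choosing $t=\sqrt{2\log(1/x)}$, so that $e^{-t^2/2}=x$, and then squaring the resulting bounds on $s_{\max}(Z)$ and on $(s_{\min}(Z))_+$ yields exactly (\ref{majoration_wishart_sous_gaussienne}).

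The delicate part is the small-ball bound (\ref{majoration_wishart_queue_distribution}), and here the strategy is to reduce the lower tail of $\varphi_{\min}(Z^TZ)$ to the $\chi^2$ lower-tail estimate (\ref{minoration_chi2_petite_proba}) of Lemma \ref{lemma_concentration_chi2}. One route combines the deterministic comparison $\varphi_{\min}(Z^TZ)\geq \det(Z^TZ)/\varphi_{\max}(Z^TZ)^{d-1}$ with the Bartlett decomposition $\det(Z^TZ)=\prod_{i=1}^d \chi^2_{n-i+1}$ (a product of independent chi-square variables), controlling $\varphi_{\max}$ from above on the event of (\ref{majoration_wishart_sous_gaussienne}) and the determinant from below through (\ref{minoration_chi2_petite_proba}) applied to the smallest-degree factor $\chi^2_{n-d+1}$. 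An alternative route uses $\varphi_{\min}(Z^TZ)\geq \big(\mathrm{tr}[(Z^TZ)^{-1}]\big)^{-1}\geq \min_{j} g_j/d$, where $g_j=\mathrm{dist}(Z_j,\mathrm{span}(Z_i:i\neq j))^2\sim \chi^2_{n-d+1}$, followed by a union bound over $j$ and again (\ref{minoration_chi2_petite_proba}).

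I expect the main obstacle to be the precise bookkeeping of the exponent and constant in (\ref{majoration_wishart_queue_distribution}): both routes above must be arranged so that the cost of pushing $\varphi_{\min}$ down to a fraction of its typical size $\asymp n$ is paid through a single chi-square left tail, and so that the slack incurred in discarding the $d-1$ larger eigenvalues (or the correlated diagonal terms of the inverse) produces exactly the exponent $2/(n-2d)$ together with the logarithmic correction $[1\vee \log(2/x)/n]^{-1}$. The assumption $n\geq 4d+1$ is precisely what keeps the effective number of degrees of freedom $n-2d$ positive and bounded away from $0$, so that (\ref{minoration_chi2_petite_proba}) can be applied with a uniformly controlled numerical constant $C$.
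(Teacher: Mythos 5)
Your treatment of the two bounds in (\ref{majoration_wishart_sous_gaussienne}) is correct, and it is exactly the argument behind the result the paper cites for them (Theorem 2.13 of Davidson and Szarek): Gaussian concentration for the $1$-Lipschitz singular value maps plus Gordon's bounds on the expectations. Nothing to add there.

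The genuine gap is in (\ref{majoration_wishart_queue_distribution}): neither of your two routes can reach the stated bound, and the obstruction is not ``bookkeeping'' but a loss that cannot be absorbed into a universal constant $C$. Keep in mind that the hypothesis $n\geq 4d+1$ allows $d$ proportional to $n$ (say $d\approx n/4$), and that for moderate $x$ (say $x=1/2$) the claimed threshold $nCx^{2/(n-2d)}[1\vee\log(2/x)/n]^{-1}$ is of order $n$; so any proof must produce thresholds of order $n$ uniformly over $d\leq (n-1)/4$. Your second route gives $\varphi_{\min}(Z^TZ)\geq \min_j g_j/d$, and since each $g_j$ is stochastically dominated by $\|Z_j\|^2\approx n$, this inequality can never yield more than order $n/d$: for $d\approx n/4$ it caps out at a constant. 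The factor $d$ lost in $\mathrm{tr}[(Z^TZ)^{-1}]\leq d/\min_j g_j$ is only recouped when $\log(1/x)\gtrsim n\log d$, where the gap between the exponents $2/(n-d+1)$ and $2/(n-2d)$ compensates it; in the intermediate range $n\lesssim \log(1/x)\lesssim n\log d$ — precisely the range in which the paper applies the bound, e.g.\ with $x=1/p$ in the proof of Proposition \ref{cor_minoration_inverse_dimension_pas_trop_grande} — the route falls short. Your first route is worse: when $d\approx n/4$, the quantity $\det(Z^TZ)/\varphi_{\max}(Z^TZ)^{d-1}$ is typically of order $ne^{-cn}$, since $\det(Z^TZ)\approx \prod_{i=0}^{d-1}(n-i)\approx n^de^{-d^2/(2n)}$ while $\varphi_{\max}^{d-1}\approx n^{d-1}(1+\sqrt{d/n})^{2(d-1)}$: the constant-factor gap between the geometric mean of the eigenvalues and the top eigenvalue is raised to the power $d-1$ and becomes exponentially small in $n$. (Secondarily, the determinant is a product of $d$ independent $\chi^2$ factors, so all of them, not only the smallest-degree one, must be bounded below.)

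What does work — and what the shape of the bound is pointing to — is a covering argument at a scale tied to the target threshold. Let $M^2=n\bigl(1+\sqrt{d/n}+\sqrt{2\log(2/x)/n}\bigr)^2\leq C_1n[1\vee\log(2/x)/n]$ be the bound on $\varphi_{\max}$ valid with probability $1-x/2$ (this is where the bracket in the statement comes from). Take a $(t/M)$-net $N$ of the unit sphere of $\mathbb{R}^d$, so that on the event $\{\varphi_{\max}\leq M^2\}$, $s_{\min}(Z)\leq t$ forces $\|Zu\|\leq 2t$ for some $u\in N$; note $\|Zu\|^2\sim\chi^2_n$ for each fixed $u$, and apply (\ref{minoration_chi2_petite_proba}) at each of the $|N|\leq (3M/t)^d$ net points via a union bound. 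Solving $(3M/t)^d(4et^2/n)^{n/2}=x/2$ for $t^2$ gives $t^2\geq Cn\,x^{2/(n-d)}[1\vee\log(2/x)/n]^{-d/(n-d)}$, with all constant powers universal precisely because $n\geq 4d+1$ forces $d/(n-d)\leq 1/3$; this implies the claimed inequality (with a slightly better exponent). This also clarifies the role of $n\geq 4d+1$: it is not about keeping $n-2d$ positive so much as about keeping the entropy cost of the net a bounded power, and it is the exact $\chi^2$ small-ball inequality (rather than a subgaussian substitute) that lets the bound hold at all probability levels $x$, which is the very feature the paper emphasizes when contrasting this lemma with the Rudelson--Vershynin result.
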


The two first deviation inequalities are taken from Theorem 2.13 in
\cite{davidson2001}.  The bound (\ref{majoration_wishart_queue_distribution})
allows to control the tail distribution of the smallest eigenvalue of a
Wishart distribution. Rudelson and
Vershynin~\cite{rudelson} have 
provided a control similar to (\ref{majoration_wishart_queue_distribution})
under subgaussian assumptions. However, their results only holds for events of
probability smaller than $1-e^{-n}$.


\section*{Acknowledgements}
I am grateful to Yannick Baraud and Christophe Giraud, the Associate Editor, and two anonymous referees for 
suggestions that greatly improve the presentation of the paper.

\bibliographystyle{acmtrans-ims}

\bibliography{estimation}

\end{document}